\documentclass[twoside]{amsart}
\usepackage{a4wide, amsmath, amsthm, amssymb, amscd, mathptmx, mathtools}
\usepackage{hyperref}
\usepackage{enumitem}
\usepackage{braket} 
\usepackage{soul,xcolor} 
\setstcolor{red} 
\usepackage[colorinlistoftodos, textwidth=2.7cm]{todonotes}
\setuptodonotes{fancyline, color=orange}
\setlength{\marginparwidth}{2.5cm} 
\usepackage{setspace} 
\usepackage{amsfonts}
\usepackage{bbm}
\usepackage{tikz}
\usetikzlibrary{arrows}

\newcommand\imCMsym[4][\mathord]{%
	\DeclareFontFamily{U} {#2}{}
	\DeclareFontShape{U}{#2}{m}{n}{
		<-6> #25
		<6-7> #26
		<7-8> #27
		<8-9> #28
		<9-10> #29
		<10-12> #210
		<12-> #212}{}
	\DeclareSymbolFont{CM#2} {U} {#2}{m}{n}
	\DeclareMathSymbol{#4}{#1}{CM#2}{#3}
}

\imCMsym{cmmi}{124}{\Cjmath}

\usepackage{color}
\usepackage{pstricks}

\usepackage[all]{xy}\CompileMatrices\SelectTips{cm}{12}

 \makeatletter 
 \providecommand\@dotsep{5} 
 \makeatother 

\theoremstyle{plain}
\newtheorem{Thm}{\sc Theorem}[section]
\newtheorem{Theorem}[Thm]{\sc Theorem}
\newtheorem{Corollary}[Thm]{\sc Corollary}

\newtheorem*{Corollary*}{\sc Corollary}

\newtheorem{Proposition}[Thm]{\sc Proposition}
\newtheorem*{Proposition*}{\sc Proposition}
\newtheorem{Lemma}[Thm]{\sc Lemma}

\theoremstyle{definition}
\newtheorem{Definition}[Thm]{Definition}

\theoremstyle{remark}
\newtheorem{Remark}[Thm]{Remark}
\newtheorem{Example}[Thm]{Example}
\newtheorem*{Example*}{Example}
\newtheorem*{Remark*}{Remark}


\renewcommand{\AA}{{\mathbb A}}
\newcommand{\CC}{{\mathbb C}}

\newcommand{\DD}{{\mathbb D}}
\newcommand{\EE}{{\mathbb E}}
\newcommand{\FF}{{\mathbb F}}
\newcommand{\GG}{{\mathbb G}}

\newcommand{\LL}{{\mathbb L}}

\newcommand{\NN}{{\mathbb N}}
\newcommand{\ZZ}{{\mathbb Z}}

\renewcommand{\SS}{{\mathbb S}}

\newcommand{\cB}{{\mathcal B}}
\newcommand{\cC}{{\mathcal C}}
\newcommand{\cD}{{\mathcal D}}
\newcommand{\cX}{{\mathcal X}}

\newcommand{\cF}{{\mathcal F}}

\newcommand{\cO}{{\mathcal O}}

\newcommand{\cU}{{\mathcal U}}

\newcommand{\sC}{{\mathfrak C}}

\newcommand{\sF}{{\mathfrak F}}

\newcommand{\sT}{{\mathfrak T}}

\newcommand{\Aff}{{\rm Aff}}
\newcommand{\Aut}{{\rm Aut}}

\newcommand{\Pic}{{\mathop{\rm Pic \, }}}
\newcommand{\GL}{\mathop{\rm GL\, }}

\newcommand{\Coh}{{\mathop{\operatorname{Coh}\, }}}

\newcommand{\Mic}{{\mathop{\operatorname{MIC}\, }}}

\newcommand{\Hom}{{\mathop{{\rm Hom}}}}
\newcommand{\et}{{\mathop{\rm \acute{e}t \, }}}

\newcommand{\cHom}{{\mathop{{\mathcal H}om}}}

\newcommand{\id}{\mathop{\rm Id}}

\newcommand{\im}{\mathop{\rm im \, }}

\newcommand{\cEnd}{{\mathop{{\mathcal E}nd}\,}}

\newcommand{\coker}{{\mathop{\rm coker \, }}}

\newcommand{\rk}{{\mathop{\rm rk \,}}}

\newcommand{\Spec}{{\mathop{{\rm Spec\, }}}}

\newcommand{\Vect}{{\mathop{{\rm Vect \,}}}}

\def \Coh {{\sf Coh}}
\def \Gal {{\sf Gal}}
\def \Reff {{\sf Ref}}
\def \perf {{\sf perf}}
\def \Vect {{\sf Vect}}

\def \Ker {{\sf Ker}}
\def \pp {{\mathfrak{p}}}
\def \Rep {{\sf Rep}}

\def \op {{\rm op}}
\def \PiF {\Pi^{F\sf{\text{-}div}}}
\def \Hl {{\rm H}}
\def \DD {{\rm DD}}
\def \No {{\rm N}}
\def \et {{\textup{ét}}}

\newcommand\Fdiv {{F\sf{\text{-} div}}}
\newcommand{\unit}{{\mathbbm{ 1}}}

\newcommand{\End}{{\mathop{{\rm End \,}}}}

\begin{document}
	
	\markboth {$F$-divided bundles}{$F$-divided bundles}

	\title{$F$-divided bundles  on normal $F$-finite schemes}

\author{Adrian Langer}
\email{alan@mimuw.edu.pl}
\address{Institute of Mathematics, University of Warsaw,
	ul.\ Banacha 2, 02-097 Warszawa, Poland}
	
\author{Lei Zhang}
\email{cumt559@gmail.com}
\address{Sun Yat-Sen University\\
    School of Mathematics (Zhuhai)\\    
    Zhuhai, 
    Guangdong Province\\ China}
	
	\date{
        \today
    }
	
	\maketitle
	

	\begin{abstract}{ 
		In this paper we study $F$-divided bundles on  irreducible Noetherian normal $F$-finite $\FF_p$-schemes and we show that their Tannakian category is governed by the behaviour at the generic point. In particular, if $U\subset X$ is an open subset of a normal variety defined over an algebraically closed field then the corresponding homomorphism of $F$-divided fundamental groups is faithfully flat. This is analogous to a known fact about the topological fundamental group of an open subset of a normal complex analytic variety.
		We use this result to show that simply connected, proper, normal varieties in positive characteristic admit no nontrivial $F$-divided bundles. This generalizes an earlier result of H. Esnault and V. Mehta concerning smooth projective varieties, and settles Gieseker's conjecture in a more general setting.
		}
	\end{abstract}
	
	
	\section*{Introduction}
	
Let $X$ be a unibranch complex analytic variety and let $U\subset X$ be the complement of a proper closed analytic subset. It is well known that in this case we have a surjective map $\pi_1^{\sf top}(U) \to \pi_1^{\sf top}(X)$
	of topological fundamental groups of $U $ and $X$ (see, e.g., \cite[(0.7) (B)]{Fulton-Lazarsfeld1981}). 
	This fact has the following  algebraic analogue.  A. Grothendieck introduced in \cite{Grothendieck1968} so called coherent stratified sheaves. On a scheme $X$ of finite type over a field $k$ these sheaves were later shown by N. Saavedra Rivano in \cite[Chapitre VI, 1.2]{Saavedra72} to form a Tannakian category. Upon a choice of a rational point this leads to a stratified fundamental group $\pi _1^{\sf strat}(X)$. 
	\cite[Expos\'e XIV, Corollaire 1.19]{SGA2} asserts that if $X$ is also
    geometrically irreducible and geometrically unibranch and $i:
    U\hookrightarrow X$ is an open subset then for all prime integers $l$ (or,
    equivalently, for  one prime $l$) the canonical map
    ${\underline{\ZZ/l\ZZ}}_U \to i_*i^* ({\underline{\ZZ/l\ZZ}}_U)$ is an
    isomorphism of \'etale sheaves. If $k=\CC$ (or in fact for any
    algebraically closed field of characteristic zero)  this fact, together
    with the proof of \cite[Theoreme 4.4]{Grothendieck1970} and  the
    Grothendieck--Malcev theorem, shows that  $\pi_1^{\sf strat}(U) \to
    \pi_1^{\sf strat}(X)$ is faithfully flat. 

    The main goal of this paper is to show that the same theorem holds in positive characteristic.
   Recall that an $\FF_p$-scheme is called $F$-finite if its absolute Frobenius endomorphism is a finite map. If $X$ is an $F$-finite Noetherian scheme, the category of coherent stratified sheaves (or crystals on the infinitesimal site $(X/\FF_p)_{\sf inf}$) is equivalent to that of $F$-divided bundles (see \cite[Theorem 2.1]{Bhatt} or \cite[Proposition 3.4]{Esnault-Srinivas2016} for a more special case).
   To any such scheme $X$, we associate a pro-smooth
banded affine gerbe $\Pi_X^{\Fdiv}$ over the maximal perfect subfield $\cO_X(X)^\perf$ of $\cO_X(X)$. This gerbe is characterized by the property that its category of vector bundles $\Vect(\Pi_X^{\Fdiv})$ is equivalent to the category of $F$-divided bundles $\Vect^\perf(X)$.
Our central result is the following theorem (see Theorems \ref{normal open
induces surjection on stratified fundamental groups}).

	\begin{Theorem}\label{main1}
		Let $X$ be an integral Noetherian geometrically unibranch $F$-finite
        scheme { over $\FF_p$}.
		Then  for any open subset $U\subseteq X$, the restriction
        $\Pi_U^{\Fdiv}\to\Pi_X^{\Fdiv}$ is a relative gerbe { over
        $\cO_U(U)^\perf=\cO_X(X)^\perf$}.
	\end{Theorem}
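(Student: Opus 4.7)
The plan is to translate the statement into a Tannakian assertion on the restriction functor $j^{*}\colon \Vect^{\perf}(X)\to \Vect^{\perf}(U)$, and then verify the resulting conditions by exploiting the rigidity forced by iterated Frobenius pullbacks. By the Tannakian duality for affine gerbes banded over a perfect ring, the morphism $\Pi_U^{\Fdiv}\to\Pi_X^{\Fdiv}$ is a relative gerbe over $\cO_X(X)^{\perf}$ if and only if: (i) the bases agree, $\cO_U(U)^{\perf}=\cO_X(X)^{\perf}$; (ii) $j^{*}$ is fully faithful; and (iii) the essential image of $j^{*}$ is closed under subobjects. For (i), the inclusion $\cO_X(X)^{\perf}\subseteq\cO_U(U)^{\perf}$ is tautological. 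For the reverse, after passing if necessary to the normalization (allowed because $X$ is geometrically unibranch), every codimension-one $x\in X\setminus U$ carries a discrete valuation $v_x$ on the function field. If $a\in\cO_U(U)^{\perf}$, writing $a=a_n^{\,p^n}$ with $a_n\in\cO_U(U)$ for every $n$ gives $v_x(a)=p^n v_x(a_n)\in\ZZ$, so $v_x(a)$ is divisible by every power of $p$ and hence equals $0$, forcing $a\in\cO_{X,x}$ and, by normality, $a\in\cO_X(X)$.

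For (ii), take two $F$-divided bundles $E=(E_i,\sigma_i)$, $F=(F_i,\tau_i)$ on $X$ and an $F$-divided morphism $\varphi=(\varphi_i)$ between their restrictions to $U$. Each $\varphi_i$ is a rational section of the vector bundle $\cHom(E_i,F_i)$, and Hartogs' lemma extends $\varphi_i$ across the codimension-$\geq 2$ part of $X\setminus U$ by reflexivity. At a codimension-one point $x\notin U$, set $n_i:=-v_x(\varphi_i)$; the Frobenius compatibility $\varphi_i=\sigma_i\circ F^{*}\varphi_{i+1}\circ\sigma_i^{-1}$, combined with the fact that Frobenius pullback multiplies valuations by $p$, gives $n_i=p\,n_{i+1}$. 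Iterating shows that $n_0$ is divisible by every power of $p$, hence $n_0=0$, so $\varphi_0$ extends to $X$, and similarly for all $i$; the extensions remain Frobenius-compatible because $U$ is schematically dense. For (iii), given an $F$-divided subbundle $(V_i)\subseteq j^{*}E$ on $U$, define the reflexive saturation $\tilde V_i:=(j_{*}V_i)\cap E_i\subseteq E_i$. Kunz flatness of Frobenius at regular codimension-one points shows that $F^{*}\tilde V_{i+1}$ is saturated in $E_i$ with the same generic fiber as $\tilde V_i$, hence equal to it at those points, while agreement at codimension-$\geq 2$ points follows from the uniqueness of reflexive extensions. This produces a reflexive $F$-divided sub-system $(\tilde V_i)$ on $X$ extending $(V_i)$.

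The main obstacle is the final step of (iii): promoting each reflexive $\tilde V_i$ to an honest vector bundle, since membership in $\Vect^{\perf}(X)$ requires local freeness rather than mere reflexivity. I expect to invoke the structural result, developed earlier in the paper, that an $F$-divided system of coherent reflexive sheaves on a Noetherian geometrically unibranch $F$-finite $\FF_p$-scheme is automatically locally free; the heuristic is that the iterated identifications $\tilde V_i\cong(F^{n})^{*}\tilde V_n$ rigidify the potential non-free locus into a compatible nested sequence that can only be empty, via a depth/projective-dimension argument leveraging $F$-finiteness in the spirit of Kunz's theorem. Granted this upgrade, (i)--(iii) hold and the theorem follows.
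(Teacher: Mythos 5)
Your steps (i) and (ii) are essentially the paper's own arguments: (i) is Lemma \ref{perfection-for-normal-rings} (pass to the normalization, reduce to a DVR, and use infinite $p$-divisibility of the valuation), and the valuation argument in your (ii) is exactly the mechanism of Proposition \ref{injectivity-on-F-divided-Pic}, applied directly to sections of $\cHom(E_i,F_i)$ rather than routed through $\Pic_F$; both of these are fine. The genuine gap is in (iii), precisely at the step you yourself flag as the ``main obstacle''. Your saturation $\tilde V_i=(j_*V_i)\cap E_i$ only produces an object of ${\sf Ref}^{\sf perf}(X)$, i.e.\ an identification $F_X^{[*]}\tilde V_{i+1}\cong\tilde V_i$ \emph{after reflexivization}: at singular points (which lie in codimension $\ge 2$ on a normal scheme) the Frobenius is not flat, $F_X^*\tilde V_{i+1}\to E_i$ need not be injective, and ``Kunz flatness at regular codimension-one points'' controls nothing there. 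Worse, the structural result you hope to invoke --- that an $F$-divided system of coherent reflexive sheaves on such a scheme is automatically locally free --- is false: the paper explicitly notes that $\Vect^\perf(X)\subsetneq{\sf Ref}^{\sf perf}(X)$ in general, and Lemma \ref{description-of-Ref-perf} identifies ${\sf Ref}^{\sf perf}(X)$ with $\Vect^\perf(X_{\sf reg})$, whose objects need not extend to bundles on $X$. What is true, and is the technical heart of the theorem, is only the weaker statement that a reflexive $F$-divided \emph{subobject of an $F$-divided bundle} is locally free (Corollary \ref{subobject-of-F-divided-bundle}); no depth or projective-dimension argument is offered or known for this.

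The paper closes this gap with entirely different machinery (Lemma \ref{extending objects in general}): Gabber's refinement of de Jong's alteration theorem yields an $h$-covering $f:\tilde X\to X$ by regular $F$-finite schemes, over which the subobject extends easily because Frobenius is flat there (Lemma \ref{extending objects in the regular case} together with Corollary \ref{Kindler}); one then checks that the extended subbundle carries a descent datum and invokes Bhatt's $h$-descent theorem for $\Vect^\perf$ (Theorem \ref{h-descent-for-F-divided}), itself proved via derived methods. The introduction stresses that exactly this step is what made the theorem inaccessible before these inputs. Since your proposal contains no substitute for them, the proof as written does not close.
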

	
	In case $X$ is a unibranch variety over an algebraically closed field $k$
    of positive characteristic,  we have $\cO_X(X)^\perf=k$ and the above theorem says that the homomorphism  $\pi_1^{\sf strat}(U) \to \pi_1^{\sf strat}(X)$ of affine $k$-group schemes is faithfully flat. It is known that for the generic point $\eta $ of a normal, connected, Noetherian scheme $X$, the canonical homomorphism $\pi_1^{\et}(\eta) \to \pi_1^{\et}(X)$
	of \'etale fundamental groups is surjective  (see \cite[Expos\'e V,
    Proposition 8.2]{SGA1}). In our case, we show that one can similarly
    replace an open subset $U\subset X$ with the generic point of $X$. { We
        also establish a generalization of this result to algebraic stacks (see
        Theorem \ref{normal open
induces surjection on stratified fundamental groups stack case})). However, for simplicity, we restrict our discussion in this introduction to the case of schemes.
}
	
	When the scheme $X$ is regular, the category of $F$-divided bundles is equivalent to the category of $D$-modules (see Section 4), and the above theorem can be reformulated as a statement about $D$-modules. In the special case where $X$ is a smooth variety over an algebraically closed field, this result was proven by L. Kindler in \cite{Kindler2015} using $D$-module techniques.
	
	The above theorem has very concrete new applications, even in the case of
    smooth varieties. Our main application is to prove the following theorem
    ({ see Theorem \ref{EM10 for normal proper}}), which provides a
    positive answer to Gieseker's conjecture \cite[p.~8]{Gieseker1975} for
    { normal} proper varieties.

	\begin{Theorem}\label{main2}	
		Let $X$ be a normal scheme that is proper and geometrically connected over a perfect field $k$ of positive characteristic.
		If for some rational point  $x\in X(k)$ the maximal \'etale quotient  	\(\pi_1^{\No,\et}(X,x)\) of the Nori fundamental group scheme vanishes 	then there are no non-trivial $F$-divided bundles on $X$.	
	\end{Theorem}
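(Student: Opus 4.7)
The plan is to reduce the theorem to the Esnault--Mehta case of smooth projective varieties via de Jong's Galois alteration theorem, and to carry out the descent to $X$ using Theorem~\ref{main1} and its stacky generalization.

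First, I would choose a Galois de Jong alteration $\pi\colon\tilde X\to X$ with $\tilde X$ smooth projective and geometrically connected over $k$, and with $k(\tilde X)/k(X)$ Galois of group $G$. The induced map of algebraic stacks $[\tilde X/G]\to X$ is then proper and birational, an isomorphism over a dense open subset $U\subseteq X$. An $F$-divided bundle $V$ on $X$ pulls back to a $G$-equivariant $F$-divided bundle on $\tilde X$, equivalently to an object of $\Vect^{\perf}([\tilde X/G])$. Applying the Esnault--Mehta theorem to the smooth projective $\tilde X$ (in its $G$-equivariant form), this object corresponds to a finite-dimensional representation of the orbifold fundamental group $\pi_1^\et([\tilde X/G])$.

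Next I would use Theorem~\ref{main1} in two different guises: in its schematic form, to record that $\Pi_U^{\Fdiv}\to\Pi_X^{\Fdiv}$ is a relative gerbe, so the restriction functor $\Vect^{\perf}(X)\to\Vect^{\perf}(U)$ is fully faithful and a bundle on $X$ is trivial iff its restriction to $U$ is; and in its stacky generalization, applied to $U\hookrightarrow[\tilde X/G]$, to see that the two gerbes $\Pi_X^{\Fdiv}$ and $\Pi_{[\tilde X/G]}^{\Fdiv}$ are realized as matching relative quotients of $\Pi_U^{\Fdiv}$. Together with normality of $X$ and Zariski--Nagata purity (allowing one to extend étale covers of $U$ uniquely across loci of codimension at least two in $X$), this should force the representation of $\pi_1^\et([\tilde X/G])$ attached to $V$ to descend to a representation of $\pi_1^\et(X)$. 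Since $X$ is proper and geometrically connected over the perfect field $k$, the hypothesis $\pi_1^{\No,\et}(X,x)=0$ is equivalent to the vanishing of $\pi_1^\et(X)$, and the descended representation — hence $V$ itself — is trivial.

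The main obstacle I anticipate is the descent step: showing that the Esnault--Mehta representation on $[\tilde X/G]$ genuinely descends along the stacky birational morphism $[\tilde X/G]\to X$ to a representation of $\pi_1^\et(X)$. This requires combining the schematic and stacky forms of Theorem~\ref{main1} to identify $\Pi_X^{\Fdiv}$ with $\Pi_{[\tilde X/G]}^{\Fdiv}$ as quotients of $\Pi_U^{\Fdiv}$, and then handling the ramification of $\pi$ along codimension-one loci in $X$, which in characteristic $p$ may be wild and must be controlled compatibly with the $F$-divided structure.
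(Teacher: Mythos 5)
There is a genuine gap, and it sits at the heart of your argument. The step ``Applying the Esnault--Mehta theorem to the smooth projective $\tilde X$ (in its $G$-equivariant form), this object corresponds to a finite-dimensional representation of $\pi_1^{\et}([\tilde X/G])$'' is not what the Esnault--Mehta theorem says, and the hypothesis needed to apply it is not available. Esnault--Mehta is a \emph{vanishing} statement: \emph{if} $\pi_1^{\et}$ of the smooth projective variety vanishes, \emph{then} every $F$-divided bundle is trivial. It does not assert that $F$-divided bundles on an arbitrary smooth projective variety are classified by representations of $\pi_1^{\et}$ --- that is false in general (already $\Pic_F$ of an abelian variety is far larger than the character group of $\pi_1^{\et}$; the stratified fundamental group is not pro-finite). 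Moreover, the hypothesis $\pi_1^{\et}(X,x)=0$ gives you no control over $\pi_1^{\et}(\tilde X)$ or $\pi_1^{\et}([\tilde X/G])$ for a de Jong alteration $\tilde X\to X$ of degree $>1$: an alteration can have arbitrarily large fundamental group even when the base is simply connected, and the orbifold group of $[\tilde X/G]$ additionally contains the stabilizers along the ramification. So there is no representation to descend, and the subsequent discussion of matching $\Pi_X^{\Fdiv}$ and $\Pi_{[\tilde X/G]}^{\Fdiv}$ as quotients of $\Pi_U^{\Fdiv}$, purity, and wild ramification does not repair this.

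For comparison, the paper does not reduce to Esnault--Mehta as a black box; it reruns their moduli-theoretic argument. One passes by \emph{Chow's lemma} (not de Jong) to a normal projective \emph{birational} modification $f\colon\tilde X\to X$, so that Theorem~\ref{main1} (via Corollary~\ref{birational-map-on-gerbes}) preserves simplicity, and Proposition~\ref{Jordan-Holder-for-F-divided-bundles} --- which itself rests on Theorem~\ref{main1} through Corollary~\ref{subobject-of-F-divided-bundle} --- makes the constituents $f^*E_n$ stable with numerically trivial Chern classes. One then spreads out, studies the Verschiebung rational self-map of the closure of $\{[f^*E_n]\}$ in the moduli space $M(r,\tilde X_S)$, and invokes Hrushovski's theorem to produce dense sets of Frobenius-periodic stable bundles over closed fibers. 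The genuinely delicate new point --- entirely absent from your sketch --- is the nonflat descent (Lemmas~\ref{relative-flattening} and~\ref{relative-isomorphism-of-sheaves} from SGA6) showing that these periodic bundles on $\tilde X_s$ descend to $X_s$; only \emph{then} does the Lange--Stuhler/Katz mechanism convert $(F^m)^*G'\simeq G'$ into a representation of $\pi_1^{\et}(X_{\bar s})$, which vanishes by surjectivity of specialization, yielding the contradiction with stability. If you want to salvage your outline, you would need to replace the appeal to Esnault--Mehta by this periodic-point argument and confront the descent from $\tilde X$ to $X$ directly.
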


	In case $X$ is smooth and projective the above theorem was proven by H. Esnault and V. Mehta  in \cite{Esnault-Mehta2010}. However, the proof of this theorem used moduli spaces of semistable sheaves and therefore it was restricted to the projective case.
	
	\medskip
	
	The proofs of the above theorems use various deep results. Even though Theorem \ref{main1} (or rather its corollary for varieties) could have been formulated as early as the 1970s, its proof was out of reach at the time.  To prove this theorem, we rely in particular on Gabber's generalization of  de Jong's alteration theorem,  which ensures the existence of smooth alterations in a broader setting.
	We also make use of a recent theorem of B. Bhatt on $h$-descent of $F$-divided bundles, proved using derived methods.
	
	To prove Theorem \ref{main2} we invoke Theorem \ref{main1} and establish the structure of simple $F$-divided bundles on normal projective varieties (see Proposition \ref{Jordan-Holder-for-F-divided-bundles}). Next, we apply Chow's lemma to reduce the problem to the study of $F$-divided bundles on a normal projective variety, which we examine in detail in this paper. The proof of Theorem \ref{main2} subsequently follows a strategy analogous to that of \cite[Theorem 1.1]{Esnault-Mehta2010}. However, an additional challenge emerges when one needs to descend certain vector bundles from a projective variety $Y$  to a proper variety $X$. In general, the behaviour of sheaves on proper varieties can be quite intricate, as the Hilbert functor does not need be representable  by a scheme, only an algebraic space, and its connected components need not be of finite type. As a result, it is unclear whether vector bundles on $Y$ that descend to $X$  form a locally closed subset in the moduli space. Fortunately, we can address this problem using some nonflat descent, which is ``the most delicate part'' (see \cite[Expos\'e XII, Section 4]{SGA6}) of the proof of relative representability of the Picard's functor.

\medskip
	
The structure of the paper is as follows. In the first section we review the theory of gerbes and Tannakian categories. In Section 2 we show a few general results on $F$-divided sheaves, proving in particular Bhatt's theorem on $h$-descent of $F$-divided sheaves. In the next section we study local properties of $F$-divided bundles on normal schemes and prove a key technical result showing that their Picard group injects into the Picard group at the generic point. In Section 4 we relate $F$-divided bundles on regular schemes to $D$-modules. This provides a different path to some of our results in the regular case (see Remark \ref{Kindler-works-nonetheless}). In the following section we prove Theorem \ref{main1}. Section 7 contains a description of simple $F$-divided bundles on normal projective varieties, generalizing \cite[Proposition 2.3]{Esnault-Mehta2010} and providing a simpler proof.
In the last section we use  our previous results to prove Gieseker's conjecture for normal proper varieties.

\subsection{Notation}

Let $X$ be a { locally} Noetherian scheme. A vector bundle on $X$ is a coherent
$\cO_X$-module, which is locally free (note that with this definition the rank
of a vector bundle can vary on different connected components).  
For an integral scheme $X$ we write $K(X)$ for the function field of $X$.

{ We denote $\Coh_{\ZZ}$ (resp. $\Vect_{\ZZ}$) the stack of finitely
    presented sheaves (resp. vector bundles) over
    the fpqc site
    $\Aff/\ZZ$ of affine schemes. Let $\cX$ be a category fibered over
    $\Aff/k$. We will interchangeably use the following equivalent definitions of
    finitely presented sheaves (resp. vector bundles) on $\cX$:
    \begin{itemize}
        \item
    A 1-morphism $\cX\to
    \Coh_{\ZZ}$ (resp. $\cX\to \Vect_{\ZZ}$) over $\Aff/\ZZ$; 
    \item
       A functorial association $\xi\mapsto M_{\xi}$: for any affine scheme
    $\Spec(R)$ and any map $\xi\colon\Spec(R)\to\cX$, one associates a finitely
    presented (resp. finitely presented projective)
    $R$-module $M_{\xi}$, and the association is functorial in the
    obvious sense;
    \item Suppose $\cX$ is equipped with a representable fpqc-covering from a scheme
    $X\twoheadrightarrow \cX$. Then a finitely presented module (resp. a vector
    bundle) on $\cX$ is a finitely presented module (resp. a vector bundle) $\sF$ on $X$
    together with an isomorphism $\phi\colon
    p_1^*\sF\xrightarrow{\cong}p_2^*\sF$ on $X\times_{\cX}X$ satisfying the
    cocycle condition $p_{23}^*(\phi) \circ p_{12}^*(\phi) = p_{13}^*(\phi)$.
    \end{itemize} 
    The category of finitely presented modules (resp. vector bundles) on $\cX$ is
    denoted by $\Coh(\cX)$ (resp. $\Vect(\cX)$). If $\cX$ is a locally Noetherian algebraic stack, then
    $\Coh(\cX)$ is just the category of coherent sheaves on $\cX$. If
    $\cX=\cB_kG$ is an affine gerbe, then we usually choose the rational point
    $\Spec(k)\to\cB_kG$, which corresponds to the trivial $G$-torsor, as the
    representable fpqc-covering. In this case, a vector bundle or a finitely
    presented sheaf on $\cB_kG$ is
    nothing but a finite dimensional $k$-vector space $V$ together with a
    $G$-action, i.e., a finite dimensional $G$-representation. 
}

\section{Generalities on gerbes and Tannakian categories}

This section reviews the theory of gerbes, which provide the natural geometric
framework for our results. The key idea is that certain
algebraic varieties with no rational points, while lacking a Tannakian fundamental
group, can still be endowed with a fundamental gerbe. We will be particularly
interested in affine gerbes, which are intimately connected to Tannakian
categories. We refer the reader to  \cite{Deligne-Milne1982}, \cite{BV15} and \cite{TZ1} for  additional details and explanation.

\subsection{Gerbes}
Affine gerbes form an important class of stacks. Serving as a natural
generalization of affine group schemes, they provide the correct framework for
studying moduli problems with non-unique isomorphisms. Their geometry is
controlled by their representation categories, making them indispensable tools
in the study of Tannakian fundamental group schemes.

\begin{Definition}\label{def:gerbe}
	Let $\sC$ be a site. A stack $\cX$ fibered in groupoids over $\sC$ is a \emph{gerbe} if:
	\begin{enumerate}[label=(\roman*)]
		\item (\textbf{Local existence}) For every $U \in \sC$, there exists a covering $\{U_i \to U\}$ and objects $x_i \in \cX(U_i)$.
		\item (\textbf{Local connectivity}) For every $U \in \sC$ and any two objects $x, x' \in \cX(U)$, there exists a covering $\{U_i \to U\}$ such that $x|_{U_i} \cong x'|_{U_i}$ in $\cX(U_i)$.
	\end{enumerate}
	A gerbe is thus a stack that is locally non-empty and locally connected by
    isomorphisms. This is reminiscent of a torsor -- indeed, gerbes can be thought of as ``2-torsors'' for a sheaf of groups.
\end{Definition}

For any object $T \in \cX(U)$, the sheaf $\Aut(T)$ of automorphisms of $T$ is a central object of study. The local connectivity condition implies that for any two objects $T, T' \in \cX(U)$, the sheaves $\Aut(T)$ and $\Aut(T')$ are locally isomorphic. This leads to the definition of a band, which captures the isomorphism class of these automorphism sheaves. For our purposes, the following geometric perspective is most useful.

\begin{Definition}\label{def:affine-gerbe}
	Let $k$ be a field and $\sC = \Aff/k$ the fpqc site of affine $k$-schemes.
    A gerbe $\cX$ over $\sC$ is called \emph{affine} (resp.
    {\emph{pro-smooth banded}})
     if there exists a field extension $K/k$ and an object $T \in \cX(K)$ such
     that the automorphism group scheme $\Aut(T)$ is representable by an affine
     (resp. pro-smooth) $K$-group scheme.
\end{Definition}

By fpqc descent, if $\cX$ is an affine gerbe, then for any $U \in \Aff/k$ and any $T \in \cX(U)$, the group functor $\Aut(T)$ is representable by an affine, flat $U$-group scheme. 

The prototypical example of an affine gerbe is a classifying stack.
Let $G$ be an affine group scheme over a field $k$. The classifying stack $\cB_kG$, which associates to a $k$-algebra $R$ the groupoid of fpqc $G$-torsors over $\Spec(R)$, is a gerbe over $\Aff/k$. This gerbe admits a $k$-rational point corresponding to the trivial $G$-torsor over $\Spec(k)$.
Conversely,  any affine gerbe with a rational point is a classifying stack.
More precisely, if $\Gamma$ is an affine gerbe over $\Aff/k$ which admits a
rational point $x \in \Gamma(k)$, then $\Gamma \simeq \cB_kG$, where $G =
\Aut(x)$ is the automorphism group scheme of $x$, and this equivalence sends
$x$ to the trivial $G$-torsor. Any gerbe that is equivalent to $\cB_kG$ for
some affine $k$-group scheme $G$ is called a \emph{trivial gerbe}.
Since every affine gerbe $\Gamma$ over $\Aff/k$ admits a section fpqc-locally,
it is fpqc-locally a trivial gerbe. That is, for some fpqc field extension
$l/k$, we have $\Gamma_l := \Gamma \times_k l \simeq \cB_l G$ for some affine
group scheme $G$ over $l$. Note that a different trivialization may yield a
different group scheme and they differ by an inner twist; the intrinsic object is the gerbe itself, not a specific group presenting it.

\begin{Definition}\label{def:relative-gerbe}
	Let $\phi\colon \Gamma_1 \to \Gamma_2$ be a 1-morphism of affine gerbes over the fpqc site $\Aff/k$. The morphism $\phi$ is called \emph{a relative gerbe} if for some (hence for all) field extension $l/k$ and a 1-morphism $\Spec(l) \to \Gamma_2$, the fibered product $\Gamma_1 \times_{\Gamma_2} l \to \Spec(l)$ is a gerbe over the fpqc site $\Aff/l$.
\end{Definition}

Let $\varphi\colon G \to G'$ be a homomorphism of affine $k$-group schemes.
It induces a 1-morphism of classifying stacks $\phi\colon \cB_kG \to \cB_kG'$.
\begin{enumerate}[label=(\alph*)]
	\item If $\varphi$ is faithfully flat, then for the point $x\colon \Spec(k) \to \cB_kG'$ corresponding to the trivial $G'$-torsor, the fiber $\cB_kG \times_{\cB_kG'} k$ is equivalent to $\cB_k(\Ker(\varphi))$, which is a gerbe over $\Aff/k$.
	\item If $\varphi$ is a closed immersion, then the fiber $\cB_kG \times_{\cB_kG'} k$ is represented by the quotient scheme $G'/G$, which is a nontrivial $k$-scheme unless $G' = G$.
\end{enumerate}
Since any homomorphism of affine group schemes factors as a faithfully flat
quotient map followed by a closed immersion, the induced morphism $\phi$ is a
relative gerbe (in the sense of Definition~\ref{def:relative-gerbe}) if and
only if $\varphi$ is faithfully flat. This shows that the concept of a relative
gerbe generalizes the notion of a surjective homomorphism to the context of
morphisms between affine gerbes.

\subsection{Tannakian gerbes}\label{Tannakian Duality} A Tannakian category is a rigid abelian
     tensor category which is morally the category of representations of an affine group scheme. The presence or absence of a fiber functor over the base field determines whether this group scheme exists or if one must work with the more general notion of an affine gerbe. The fundamental link between gerbes and
    Tannakian categories is provided by the following principle:

    \begin{Theorem} Let $k$ be a field. There is an equivalence of $2$-categories:
\[ 
    \fbox{
  \begin{tabular}{ccc}
            &\text{Affine gerbes}&\\
      &\text{over $k$}&
  \end{tabular}
}
\Longleftrightarrow
\fbox{
  \begin{tabular}{ccc}
            &\text{Tannakian categories}&\\
            &\text{over $k$}&
  \end{tabular}
}
\]
This equivalence is implemented by the following constructions:
\begin{itemize}
    \item Given an affine gerbe $\Gamma$ over $k$, its \emph{category of representations} $\Rep(\Gamma)$ (i.e., the category $\Vect(\Gamma)$ of vector bundles on $\Gamma$) is a Tannakian category over $k$.
    \item Conversely, given a Tannakian category $\sT$ over $k$, the functor
    \[
        \Pi^{\sT} \colon (\Aff/k)^\op \to \mathrm{(Groupoids)}, \  T \mapsto
        \left\{ \omega \colon \sT \to \Vect(T) \text{ faithful, exact,
        $k$-linear, tensor functors} \right\}
    \]
    is an affine gerbe over $k$, called its \emph{fundamental gerbe}.
\end{itemize}
These constructions are quasi-inverse to each other.   
\end{Theorem}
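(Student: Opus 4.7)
The plan is to establish the equivalence by constructing the two functors $\Gamma \mapsto \Vect(\Gamma)$ and $\sT \mapsto \Pi^{\sT}$ explicitly and then verifying that they are mutually quasi-inverse. For the forward direction, after choosing an fpqc field extension $l/k$ and an object $T \in \Gamma(l)$ with affine automorphism group $G = \Aut(T)$, the trivialization $\Gamma_l \simeq \cB_l G$ identifies $\Vect(\Gamma_l)$ with the category of finite-dimensional $G$-representations, which is manifestly a rigid $l$-linear abelian tensor category. Applying fpqc descent for vector bundles to all the defining structures (tensor product, duals, unit object, abelian structure) along $\Spec(l) \to \Spec(k)$ endows $\Vect(\Gamma)$ with a rigid $k$-linear abelian tensor structure, and the pullback along $T$ yields an exact faithful tensor functor $\Vect(\Gamma) \to \Vect(l)$, showing that the resulting category is Tannakian over $k$, neutralized over $l$.

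For the backward direction, I would first verify that $\Pi^{\sT}$ is a stack on $\Aff/k$ by applying fpqc descent for vector bundles objectwise through the Tannakian data. Local connectivity, together with the affine group scheme structure on automorphism sheaves, follows from the standard Tannakian formalism: for any two fiber functors $\omega_1, \omega_2 \colon \sT \to \Vect(R)$, the functor $\operatorname{Isom}^{\otimes}(\omega_1, \omega_2)$ is representable by an affine faithfully flat $R$-scheme. This is proved by writing $\sT$ as a filtered union of finitely generated tensor subcategories $\sT_\alpha$ and identifying $\underline{\Aut}^{\otimes}(\omega_1|_{\sT_\alpha})$ with the spectrum of a suitable coalgebra of matrix coefficients; passing to the limit assembles the affine $R$-group scheme, and local connectivity of $\Pi^{\sT}$ then follows because any torsor under an affine group scheme trivializes fpqc-locally.

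The main obstacle, and by far the deepest step, is local non-emptiness: one must show that $\Pi^{\sT}(l)$ is non-empty for some field extension $l/k$, i.e., that every Tannakian category admits a fiber functor after a suitable base change. This is Deligne's existence theorem (see \cite{Deligne-Milne1982}), whose standard proof constructs a faithfully flat commutative algebra object $A$ in the ind-completion $\operatorname{Ind}(\sT)$, engineered so that the functor $t \mapsto t \otimes A$ lands in finitely generated projective $A$-modules, producing a fiber functor with values in $\Vect(A)$; reducing $A$ modulo a maximal ideal then yields a fiber functor over a residue field $l$. Building $A$ requires sufficiently many flat (or injective) objects in $\operatorname{Ind}(\sT)$ together with a Zorn-type exhaustion across finitely generated tensor subcategories, and essentially all the depth of the theorem is concentrated here.

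Finally, the quasi-inverse property is checked fpqc-locally. The unit $\sT \to \Vect(\Pi^{\sT})$ sends $t \in \sT$ to the evaluation bundle $\omega \mapsto \omega(t)$; after trivializing $\Pi^{\sT}$ via a fiber functor $\omega_0$ with automorphism group $G$ over some $l/k$, this becomes the classical Tannakian reconstruction equivalence for neutral categories, identifying $\sT \otimes_k l$ with $\Rep_l(G)$. The counit $\Gamma \to \Pi^{\Vect(\Gamma)}$ sends $\xi \colon \Spec(R) \to \Gamma$ to the pullback fiber functor $\xi^*$; trivializing $\Gamma$ identifies both sides with $\cB_l G$ for the same $G$, and the morphism becomes the identity on the trivialization. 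Since both comparison morphisms are equivalences fpqc-locally, they are equivalences of gerbes globally, completing the proof.
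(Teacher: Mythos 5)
Your proposal is correct and follows essentially the same route as the paper: both sides establish well-definedness of the two functors by reducing to the trivial gerbe $\cB_l G$ via fpqc descent, and then verify that the unit (Tannakian recognition) and the counit (Tannakian reconstruction) are equivalences, with the paper citing Deligne--Milne and Tonini--Zhang at exactly the points where you sketch the classical arguments. The one caveat is that your ``local non-emptiness'' step, which you present as the deepest part via Deligne's existence theorem, is under the Deligne--Milne definition used here simply part of the definition of a Tannakian category (existence of a fiber functor over some extension), so no additional depth is needed there.
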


\begin{proof}
    The proof is carried out in three steps:
    \begin{enumerate}[label=(\arabic*)]
        \item Well-definedness of the two functors:
            \begin{enumerate}
                \item If $\Gamma$ is an affine gerbe over $\Aff/k$, then
                    $\Vect(\Gamma)$ forms a $k$-Tannakian category;
                \item If $\sT$ is a $k$-Tannakian category, then the 2-functor
                    $\Pi^{\sT}$
                    is an affine gerbe over $\Aff/k$.
            \end{enumerate}
        \item \emph{Tannakian recognition}: Let $\sT$ be a $k$-Tannakian
            category. The natural
            functor $\sT\to\Vect(\Pi^{\sT})$ is an equivalence.
        \item \emph{Tannakian reconstruction}: Let $\Gamma$ be an affine gerbe and $\cX$ 
            any fibered category over $\Aff/k$. The natural pullback
            functor induces an equivalence: 
            \[
                \Hom_{\Aff/k}(\cX,\Gamma)\longrightarrow
                \Hom_{\otimes,k}(\Vect(\Gamma),\Vect(\cX))
            \]
            where $\Hom_{\otimes,k}$ denotes the category of faithful, exact,
            $k$-linear, tensor functors. 
    \end{enumerate}
    (1).(a) is due to the fact that $\mathrm{H}^0(\cO_{\Gamma})=k$ and
    $\Vect(\Gamma)=\Coh(\Gamma)$. Indeed, for any field extension $l/k$, we
    have
    $\mathrm{H}^0(\cO_{\Gamma\times_kl})=\mathrm{H}^0(\cO_{\Gamma})\otimes_kl$
    and $\cF\in\Coh(\Gamma)$ is a vector bundle iff
    $\cF\otimes_kl\in\Vect(\Gamma\times_kl)$. Since any gerbe becomes trivial
    after some field extension $l/k$,
    it suffices to prove the statements for the trivial gerbe.
    For $\Gamma=\cB_kG$, the trivial section $\Spec(k)\to\cB_kG$ is faithfully
    flat and $k\subseteq\mathrm{H}^0(\cO_{\cB_k(G)})$, implying
    $\mathrm{H}^0(\cO_{\cB_k(G)})=k$; For any $M\in\Coh(\cB_kG)$, the pullback
    of the $R$-module $M_{\xi}$ by the $G$-torsor
    $\xi\colon\Spec(R)\to\cB_kG$ is free, hence $M$ is a vector
    bundle. (1).(b) is contained
    in \cite[Theorem 3.2]{Deligne-Milne1982}. Tannakian recognition is
    \cite[Theorem 3.9]{Deligne-Milne1982}. For Tannakian reconstruction, see
    \cite[Theorem 1.4 and Example 1.5]{TZ1}.
\end{proof}

Recall that a \emph{neutral affine gerbe} over $\Aff/k$ is a pair $(\Gamma,*)$, where
$\Gamma$ is an affine gerbe and $*$ is a $k$-rational section of $\Gamma$. A
\emph{neutral Tannakian category} is a pair $(\sT,\omega)$, where $\sT$ is a
$k$-Tannakian category and $\omega\colon\sT\to\Vect(k)$ is a fiber functor. As
a corollary of Theorem \ref{Tannakian Duality}, we have
\begin{Corollary} There are natural equivalences of $2$-categories:
\[
       \begin{tikzpicture}[xscale=6.9,yscale=-3.2]
            \node (A0_0) at (0, 0.5) {$\fbox{
  \begin{tabular}{ccc}
            &\text{Affine group schemes}&\\
      &\text{over $k$}&
  \end{tabular}
}$};
            \node (A0_1) at (1, 0) {$\fbox{
  \begin{tabular}{ccc}
            &\text{Neutral affine gerbes}&\\
      &\text{over $k$}&
  \end{tabular}
}$};
            \node (A1_1) at (1, 1) {$\fbox{
  \begin{tabular}{ccc}
            &\text{Neutral Tannakian categories}&\\
            &\text{over $k$}&
  \end{tabular}
}$};
        
            \draw[implies-implies,double equal sign distance] (A0_0) -- node[anchor=south]{$\scriptstyle{}$} (A0_1);
            
            \draw[implies-implies,double equal sign distance] (A0_1) -- node[right] {$\scriptstyle{}$} (A1_1);
            \draw[implies-implies,double equal sign distance] (A0_0) -- node[anchor=south west]
            {$\scriptstyle{}$} (A1_1);
        \end{tikzpicture}
    \]
        In particular, the $2$-category of neutral gerbes and that of neutral
        Tannakian categories are essentially $1$-categories.
\end{Corollary}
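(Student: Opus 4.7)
The plan is to deduce this corollary from the preceding Tannakian duality Theorem by tracking the basepoint data across the equivalence. The essential $1$-categoricity stated at the end will come for free, because affine group schemes visibly form an ordinary $1$-category and any equivalence of $2$-categories transports this property to the other two corners.

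For the equivalence between neutral affine gerbes and affine group schemes, I would invoke the observation already recorded in the paragraph preceding Definition~\ref{def:relative-gerbe}: a neutral gerbe $(\Gamma,*)$ is canonically equivalent to $(\cB_kG,\text{trivial torsor})$ with $G=\Aut(*)$. At the level of 1-morphisms, 2-Yoneda identifies pointed maps $(\cB_kG,\text{triv})\to(\cB_kH,\text{triv})$ with group homomorphisms $G\to H$; any 2-morphism between two such pointed maps is determined by its value on the trivial $G$-torsor, which is rigidified by the basepoint, so it is forced to be the identity.

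For the equivalence between neutral gerbes and neutral Tannakian categories, the essential point is that $k$-rational sections of the fundamental gerbe $\Pi^{\sT}$ correspond bijectively to fiber functors on $\sT$. I would obtain this by applying Tannakian reconstruction (item~(3) of the preceding proof) to $\Gamma=\Pi^{\sT}$ and $\cX=\Spec(k)$: the resulting equivalence
\[
\Hom_{\Aff/k}(\Spec(k),\Pi^{\sT})\;\simeq\;\Hom_{\otimes,k}(\Vect(\Pi^{\sT}),\Vect(k))
\]
identifies, via the recognition equivalence $\sT\simeq\Vect(\Pi^{\sT})$, rational sections of $\Pi^{\sT}$ with fiber functors on $\sT$. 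In the opposite direction, a section $*\colon\Spec(k)\to\Gamma$ directly yields the fiber functor $\omega_*=*^*\colon\Vect(\Gamma)\to\Vect(k)$, which is faithful because $*$ is fpqc-locally surjective. Compatibility on 1- and 2-morphisms is handled by the same reconstruction, now applied to morphisms respecting the chosen sections and fiber functors.

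The main obstacle, if one is to be singled out, is this last identification of rational sections with fiber functors. But it is nothing more than Tannakian reconstruction evaluated on the terminal scheme $\Spec(k)$, so the corollary is a pointed unpacking of the preceding Theorem, and no genuinely new ingredient is required beyond it.
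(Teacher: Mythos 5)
Your argument is correct and matches the paper's (implicit) route: the paper states this corollary without proof as a direct consequence of the duality theorem together with the already-recorded fact that a pointed affine gerbe is canonically $\cB_k\Aut(*)$, and your unpacking — identifying rational sections with fiber functors via reconstruction applied to $\Spec(k)$, and transporting essential $1$-categoricity from affine group schemes — is exactly the intended deduction.
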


The analogy among affine group schemes, affine gerbes and
    Tannakian categories is also reflected in terms of ``homomorphisms''. 
    \begin{Lemma}\label{relative gerbe = surjective} Let $f\colon \Gamma\to \Gamma'$ be a map of affine gerbes over
    $\Aff/k$. and let $f^*$ be the corresponding functor
    $\Vect(\Gamma')\to\Vect(\Gamma)$. The following are equivalent:
    \begin{enumerate}[label={\rm(\arabic*)}]
        \item The map $f$ is a relative gerbe;
        \item For some field extension $l/k$ and some section $x\in\Gamma(l)$,
            the map of affine $l$-group schemes $\Aut(x)\to\Aut(f(x))$ is
            faithfully flat;
        \item For every $k$-algebra $R$ and every section $x\in\Gamma(R)$ the
            map of affine $R$-group schemes $\Aut(x)\to\Aut(f(x))$ is
            faithfully flat;
        \item The functor $f^*$ is fully faithful and every
            subobject $W\subseteq f^*V'$ is the pullback of an object
            $W'\in\Vect(\Gamma')$.
    \end{enumerate}
\end{Lemma}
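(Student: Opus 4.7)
The plan is to prove the equivalences by first handling (1), (2), (3) via direct gerbe-theoretic arguments, and then handling (4) via Tannakian duality reduced to the neutral case. Throughout, the key principle is that in an affine gerbe any two local sections become isomorphic after an fpqc cover, so the formation $x\mapsto(\Aut(x)\to\Aut(f(x)))$ is, up to fpqc-local isomorphism, independent of $x$.

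The implication (3)$\Rightarrow$(2) is trivial. For (2)$\Rightarrow$(3), given $x\in\Gamma(R)$ and the section $x_0\in\Gamma(l)$ from (2), the local connectivity axiom applied to the pullbacks of $x$ and $x_0$ in $\Gamma(R\otimes_k l)$ produces an fpqc cover $\Spec(S)\to \Spec(R\otimes_k l)$ over which they become isomorphic; the base change of $\Aut(x)\to\Aut(f(x))$ to $S$ is then identified with that of $\Aut(x_0)\to\Aut(f(x_0))$ to $S$, which is faithfully flat by hypothesis, and faithful flatness descends along the fpqc cover $S\to R$. For (2)$\Leftrightarrow$(1), enlarge $l$ so that both gerbes have sections, and use $x$ and $f(x)$ to trivialize $\Gamma_l\simeq \cB_l G$ and $\Gamma'_l\simeq \cB_l G'$, with $\varphi\colon G\to G'$ the homomorphism induced on automorphism schemes. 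The discussion right after Definition \ref{def:relative-gerbe} then shows that $f_l$ is a relative gerbe over $l$ iff $\varphi$ is faithfully flat, and by Definition \ref{def:relative-gerbe} this is equivalent to $f$ itself being a relative gerbe.

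The main content is (3)$\Leftrightarrow$(4). Choosing $l/k$ and trivializing as above, the pullback $f^*\colon\Vect(\Gamma')\to\Vect(\Gamma)$ becomes, after base change to $l$, the restriction functor $\varphi^*\colon\Rep(G')\to\Rep(G)$. Both ``$f^*$ is fully faithful'' and ``every subobject of $f^*V'$ is a pullback'' are preserved and reflected by this fpqc base change $k\to l$, since kernels and images of morphisms of vector bundles commute with flat base change and since $\Vect(\Gamma')\to\Vect(\Gamma'_l)$ is faithful exact. The question therefore reduces to the classical neutral Tannakian result that $\varphi$ is faithfully flat if and only if $\varphi^*$ is fully faithful with essential image closed under subrepresentations, which is \cite[Proposition 2.21]{Deligne-Milne1982}. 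The main obstacle is the careful bookkeeping around the non-neutral setting---verifying that condition (4) genuinely transfers between $\Gamma$ and its neutralization $\Gamma_l$, since $\Vect(\Gamma_l)$ is not literally $\Vect(\Gamma)\otimes_k l$---but this amounts to a standard application of flat descent for coherent sheaves and their subobjects, after which the classical result for group schemes delivers the equivalence.
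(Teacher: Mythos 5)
Your proposal follows essentially the same route as the paper: the equivalences (1)$\Leftrightarrow$(2)$\Leftrightarrow$(3) are handled by observing that faithful flatness of the map on automorphism group schemes is fpqc-local, reducing to trivial gerbes, and (3)$\Leftrightarrow$(4) is handled by base-changing to a neutralizing field extension $l/k$, checking that full faithfulness and the subobject-extension condition transfer along this faithfully flat base change, and then invoking \cite[Proposition 2.21]{Deligne-Milne1982} in the neutral case. The paper is terser (it cites \cite[3.3.3]{Saavedra72} as an alternative and defers the base-change verification to the argument of Lemma \ref{extending objects in general}), but the decomposition and the key external input are identical to yours.
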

\begin{proof} For the equivalences (1)$\Leftrightarrow$(2)$\Leftrightarrow$(3),
    one just has to notice that being faithfully flat on the automorphism group
    schemes is an fpqc local property, so we are reduced to the case where $f$
    is a map of trivial gerbes. In this case, the equivalences are obvious.
    (3)$\Leftrightarrow$(4) follows from
    \cite[3.3.3]{Saavedra72}.
   One can also see it directly as follows. Full faithfulness of $f^*$ is equivalent to
   	$\cO_{\Gamma'}\cong f_*\cO_{\Gamma}$ and the condition on extension of subobjects
    satisfies the base change (as in the proof of { Lemma \ref{extending
    objects in general}}). 
So we can reduce the assertion to neutral
   	gerbes, where the result is classical and follows from \cite[Proposition 2.21]{Deligne-Milne1982}). 
\end{proof}

\section{Generalities on $F$-divided sheaves} \label{s:strat}
{
This section establishes the foundational definitions and key properties of
$F$-divided sheaves, setting the stage for the rest of the paper. The core
theme is understanding how these objects behave under various geometric
conditions, with a particular focus on the transition from regular to normal
algebraic stacks.

An $\FF_p$-algebraic space $X$ is called \emph{$F$-finite} if its absolute Frobenius map
$F_X$ is representable by a finite map of schemes. An $\FF_p$-algebraic stack $X$ is called
\emph{weakly $F$-finite} if it admits a smooth atlas by an $F$-finite scheme, i.e.,
there is a map $U\to X$ representable by a smooth fppf-covering of algebraic spaces,
where $U$ is an $F$-finite scheme. Clearly, any $F$-finite algebraic space is
weakly $F$-finite as an algebraic stack.

\subsection{$F$-divided sheaves are $F$-divided bundles}
}Let $X$ be { a locally Noetherian  $\FF_p$-algebraic stack}.
An \emph{$F$-divided (coherent) sheaf} on	$X$ is a sequence $\{E_i, \sigma_i\} _{i\in \ZZ_{\ge 0}}$ of coherent $\cO_{X}$-modules $E_i$ on $X$ and $\cO_X$-isomorphisms $\sigma _i:F_{X}^*E_{i+1}\to E_i$ of $\cO_{X}$-modules.
A morphism of $F$-divided sheaves  $\{E_i, \sigma_i\} \to  \{E'_i, \sigma'_i\} $
is a sequence of $\cO_X$-linear maps $\alpha_i: E_i\to E_i'$ such that
$\sigma_i'\circ F_X^*(\alpha_{i+1})=\alpha_i\circ \sigma_i$.
The category of $F$-divided sheaves on $X$ is denoted by ${\sf Coh} ^{\sf perf}(X)$ as it can be defined as 
$$\lim \left(... {\sf Coh} (X)\mathop{\rightarrow}^{F_X^*}{\sf Coh} (X)\mathop{\rightarrow}^{F_X^*}{\sf Coh} (X)\right).$$
Similarly, one can define the category of $F$-divided vector bundles  ${\sf Vect} ^{\sf perf}(X)$ as
$$\lim \left(... {\sf Vect} (X)\mathop{\rightarrow}^{F_X^*}{\sf Vect} (X)\mathop{\rightarrow}^{F_X^*}{\sf Vect} (X)\right).$$
We will also use an analogously defined category ${\sf QCoh} ^{\sf perf}(X)$ of quasi-coherent $F$-divided sheaves.

A \emph{unit} $\unit_X$ is the $F$-divided line bundle defined by the constant sequence $\{\cO_X\}_{i\in \ZZ_{\ge 0}}$ with canonical isomorphisms $F_{X}^*\cO_X\simeq \cO_X$.

The following fact is well-known (see \cite[Lemma 6]{dos_Santos2007} and
\cite[Proposition 1.3]{Bhatt}):

\begin{Proposition} \label{local-freeness-F-div-sheaves}
    { Let $X$ be a weakly $F$-finite $\FF_p$-algebraic stack}.
The inclusion ${\sf Vect} ^{\sf perf}(X)\subset {\sf Coh} ^{\sf perf}(X)$ is an equivalence of categories.
Equivalently, if  $\{E_i, \sigma_i\}$ is an $F$-divided sheaf then all $E_i$ are vector bundles.
\end{Proposition}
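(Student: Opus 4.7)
The plan is to reduce to a concrete commutative-algebra statement on a Noetherian local ring and exploit the fact that iterated Frobenius pullback drives the defining relations of a minimal presentation into ever-deeper powers of the maximal ideal.

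First I would reduce to the affine case. Local freeness of a coherent sheaf descends under smooth coverings, and the absolute Frobenius commutes with arbitrary morphisms, so picking a smooth atlas $U\to X$ by an $F$-finite scheme (available by hypothesis) pulls back the given $F$-divided sheaf to an $F$-divided sheaf on $U$; Zariski-localizing $U$, the claim becomes the following: for a Noetherian $F$-finite $\FF_p$-algebra $R$ and a system $\{M_i,\sigma_i\}$ of finitely generated $R$-modules with isomorphisms $\sigma_i \colon F^*M_{i+1}\xrightarrow{\sim}M_i$, each $M_i$ is projective.

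Next I would note that the function $d_i(\pp):=\dim_{\kappa(\pp)} M_i\otimes_R\kappa(\pp)$ is independent of $i$, because the absolute Frobenius is the identity on the underlying topological space and Frobenius base change preserves the dimension of a finite-dimensional vector space. Write $d(\pp)$ for this common value. Since projectivity can be checked after localization at each prime, it suffices to show $M_{i,\pp}$ is a free $R_\pp$-module of rank $d:=d(\pp)$. For each $n\geq 0$ pick a minimal presentation
\[
    0 \to K_n \to R_\pp^{d} \to M_{i+n,\pp} \to 0, \qquad K_n \subseteq \pp R_\pp^{d}.
\]
Iterating the $\sigma_j$'s identifies $F^{n*}M_{i+n,\pp}$ with $M_{i,\pp}$; applying $F^n$ to the above presentation then exhibits $M_{i,\pp}$ as a quotient of $R_\pp^{d}$ by the Frobenius twist $K_n^{[p^n]}$, which is spanned over $R_\pp$ by the coordinate-wise $p^n$-th powers of generators of $K_n$ and therefore lies inside $\pp^{p^n}R_\pp^{d}$.

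To close the argument I would fix one minimal presentation $R_\pp^{d}\twoheadrightarrow M_{i,\pp}$ with kernel $L$. Any two minimal presentations of a finitely generated module over a Noetherian local ring differ by an element of $\mathrm{GL}_d(R_\pp)$, so for each $n$ some $\phi_n\in \mathrm{GL}_d(R_\pp)$ takes $K_n^{[p^n]}$ onto $L$; hence $L\subseteq \pp^{p^n}R_\pp^{d}$ for every $n$, and Krull's intersection theorem gives $L=0$, whence $M_{i,\pp}\cong R_\pp^{d}$. The conceptual heart of the argument is the observation that Frobenius pullback sends the defining ideal of a minimal presentation into its $p$-th-power ideal, forcing the kernel into arbitrarily high powers of the maximal ideal; the stack-theoretic reductions and the closing Krull step are then routine.
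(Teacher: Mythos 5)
Your proposal is correct. The reduction to an affine chart of an $F$-finite atlas is exactly how the paper begins, but from that point the two arguments diverge: the paper simply invokes \cite[Proposition 1.3]{Bhatt} (equivalently \cite[Lemma 6]{dos_Santos2007}) for the scheme case, whereas you supply a complete, self-contained proof of that local statement. Your mechanism --- take a minimal presentation $0\to K_n\to R_\pp^{d}\to M_{i+n,\pp}\to 0$, observe that $F^{n*}$ (being right exact) presents $M_{i,\pp}$ as $R_\pp^{d}/K_n^{[p^n]}$ with $K_n^{[p^n]}\subseteq \pp^{p^n}R_\pp^{d}$, compare with a fixed minimal presentation via an element of $\GL_d(R_\pp)$, and finish with Krull's intersection theorem --- is essentially the standard proof of the cited result, so you are reconstructing the black box rather than taking a conceptually new path. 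Each step checks out: Frobenius fixes every prime, so it commutes with localization at $\pp$ and induces the residue-field Frobenius, which justifies both the constancy of $d_i(\pp)$ and the identification of the image of $F^{n*}K_n$ with the span of coordinate-wise $p^n$-th powers; and since any two minimal surjections $R_\pp^d\twoheadrightarrow M_{i,\pp}$ are intertwined by an $R_\pp$-linear automorphism preserving $\pp^{p^n}R_\pp^d$, the fixed kernel $L$ indeed lands in $\bigcap_n\pp^{p^n}R_\pp^d=0$. Two minor remarks: you should say explicitly that $R$ is Noetherian because $X$ is assumed locally Noetherian in the ambient setup of the section (Noetherianity is what you actually use, for finite generation and for Krull's theorem), and it is worth noting that your local argument never uses $F$-finiteness of $R$ --- that hypothesis enters the paper elsewhere, not in this proposition's core computation.
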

{
\begin{proof} We need to show that $E_i$ is finite locally free on $X$. Since
    $X$ is weakly $F$-finite, it admits a smooth atlas $u\colon Y\to X$, where $Y$ is an
    $F$-finite scheme. As $X$ is locally Noetherian, so is $Y$. By \cite[Prop.
    1.3]{Bhatt}, $u^*E_i$ is locally free. It follows that $E_i$ is locally
    free as well.
\end{proof}

 \subsection{The $h$-descent of $F$-divided sheaves}}
 
{
By Grothendieck's fpqc descent of quasi-coherent sheaves, it is obvious that
the fibered category $\Vect^\perf$ (or equivalently the 2-functor
$\Vect^\perf(-)$) is a stack in the fpqc-topology.  However, it is less obvious
that it also satisfies $h$-descent.
The following result is due to B. Bhatt \cite[Theorem
3.2]{Bhatt}, who  kindly allowed us to include it into this paper.
The proof is based on $h$-descent of vector bundles on locally Noetherian derived schemes due to D. Halpern-Leistner and A. Preygel (see \cite{Halpern-Leistner-Preygel2023}).}

\medskip

\begin{Theorem} [\emph{B. Bhatt}]	\label{h-descent-for-F-divided}
    { The functor $ {\sf Vect} ^{\sf perf}(-)$ satisfies descent
     for $h$-coverings of locally Noetherian}  $\FF_p$-schemes.
\end{Theorem}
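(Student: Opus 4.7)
The plan is to reduce the $h$-descent problem for $\Vect^\perf$ to the $h$-descent theorem for vector bundles on locally Noetherian derived schemes, via the interpretation of $F$-divided bundles as vector bundles on a derived perfection.

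First, I would reduce to proper surjections. The fibered category $\Vect^\perf(-)$ is automatically an fpqc-stack, being the limit of the fpqc-stack $\Vect(-)$ along the Frobenius pullback. Since the $h$-topology on locally Noetherian schemes is generated by fpqc coverings together with proper surjective morphisms of finite type, it suffices to verify descent along a single proper surjection $f\colon Y\to X$ of locally Noetherian $\FF_p$-schemes, i.e.\ that the functor $\Vect^\perf(X)\to\lim_{\Delta}\Vect^\perf(Y^{\bullet/X})$ to the descent category of the \v{C}ech nerve is an equivalence.

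Second, I would reinterpret $F$-divided bundles in derived-geometric terms. Define the derived perfection
\[
    X^\perf \ :=\ \lim\bigl(\cdots\xrightarrow{F_X}X\xrightarrow{F_X}X\bigr),
\]
as a limit in the $\infty$-category of derived $\FF_p$-schemes. The universal property of this limit identifies $\Vect(X^\perf)$ with the category of compatible sequences $(E_i,\sigma_i)$ of vector bundles on $X$, so one obtains a natural equivalence $\Vect(X^\perf)\simeq\Vect^\perf(X)$, and similarly for $Y$. Moreover, perfection commutes with finite limits of derived schemes, so the \v{C}ech nerve of $f^\perf\colon Y^\perf\to X^\perf$ is the levelwise perfection of the \v{C}ech nerve of $f$. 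Assuming the derived perfection of a locally Noetherian $\FF_p$-scheme is still locally Noetherian as a derived scheme, the Halpern-Leistner--Preygel theorem applied to the proper surjection $f^\perf$ of locally Noetherian derived schemes gives
\[
    \Vect(X^\perf)\ \xrightarrow{\ \simeq\ }\ \lim_{\Delta}\Vect\bigl((Y^{\bullet/X})^\perf\bigr),
\]
which, via the equivalence above, translates into the desired $h$-descent statement for $\Vect^\perf$.

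The main obstacle is the second step, and specifically justifying two derived-geometric inputs that are easy to state but delicate to verify. First, one must prove that the derived limit $X^\perf$ is locally Noetherian as a derived scheme—a priori its higher homotopy sheaves could be large—and that its category of vector bundles matches $\Vect^\perf(X)$; both facts rest on a careful analysis of the behaviour of Frobenius and the $\FF_p$-structure. Second, one must check that perfection commutes with the finite fibre products occurring in the \v{C}ech nerve of $f$ and that the resulting derived schemes remain locally Noetherian, so that the Halpern-Leistner--Preygel theorem applies at every level. Once these are in hand, the rest of the argument is formal: fpqc descent is automatic and the reduction to proper surjections is standard, while the descent conclusion is transported from derived to ordinary schemes by the equivalence $\Vect(X^\perf)\simeq\Vect^\perf(X)$.
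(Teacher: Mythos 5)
There is a genuine gap, and it sits at the heart of your argument: the identification $\Vect(X^\perf)\simeq\Vect^\perf(X)$ is false. The inverse limit $X^\perf=\lim\bigl(\cdots\xrightarrow{F_X}X\xrightarrow{F_X}X\bigr)$ is (affine-locally) $\Spec$ of the \emph{direct} limit perfection $\colim_F\cO_X$, and by the standard limit theorems its category of vector bundles is the \emph{colimit} $\colim\bigl(\Vect(X)\xrightarrow{F^*}\Vect(X)\rightarrow\cdots\bigr)$, i.e.\ vector bundles considered up to Frobenius pullback --- not the limit $\lim_F\Vect(X)=\Vect^\perf(X)$. The universal property of the limit of schemes only hands you the projections $X^\perf\to X$, hence a functor \emph{out of} the colimit of the categories $\Vect(X)$; it gives no comparison with the limit, and the two categories genuinely differ (already on $\Pic$: the colimit of an abelian group along an endomorphism is a localization-type object, while the limit consists of coherently divisible systems). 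The other candidate, the colimit of schemes along $F_X$ (i.e.\ $\Spec$ of the inverse limit perfection $\cO_X^{\sf perf}$), does map naturally to $\lim_F\Vect(X)$, but that functor is far from an equivalence: for a normal variety, $\cO_X(X)^{\sf perf}$ is just a perfect field, whereas $\Vect^\perf(X)$ is a large Tannakian category. There is also a second, independent obstruction: a Noetherian perfect $\FF_p$-ring is a finite product of perfect fields, so $X^\perf$ is essentially never locally Noetherian when $X$ has positive dimension; the Halpern-Leistner--Preygel theorem therefore cannot be applied to $f^\perf$ and its \v{C}ech nerve, contrary to what your second step requires.

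The actual proof does not geometrize $\Vect^\perf$ at all. One extends $\Vect^\perf(-)\coloneqq\lim_F\Vect(-)$ to locally Noetherian \emph{derived} $\FF_p$-schemes; since a limit of $h$-sheaves is an $h$-sheaf, $h$-descent on derived schemes is purely formal from Halpern-Leistner--Preygel applied to $\Vect(-)$ itself (no perfection of the underlying scheme is involved, so the Noetherian hypothesis is preserved at every level of the \v{C}ech nerve). The substantive step --- which your proposal does not address --- is the comparison $\Vect^\perf(X)\simeq\Vect^\perf(X_0)$ with the classical truncation $X_0$, needed because the derived and classical \v{C}ech nerves of an $h$-covering differ; this is proved using the fact that on Postnikov truncations the Frobenius of an animated $\FF_p$-algebra factors through the projection $\tau_{\le m+1}R\to\tau_{\le m}R$, so that the $F$-divided structure depends only on $\pi_0(R)$. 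Your reduction to proper surjections via fpqc descent is fine but unnecessary once one works with the derived $h$-sheaf condition directly.
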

{
\begin{proof}
We first extend $\mathrm{Vect}^{\perf}(-)$ from classical schemes to derived schemes (modeled by simplicial commutative $\FF_p$-algebras, i.e.,\ animated $\FF_p$-algebras). For a derived scheme $X$, we define
\[
\mathrm{Vect}^{\perf}(X) := \lim \Bigl( \cdots \xrightarrow{F^*} \mathrm{Vect}(X) \xrightarrow{F^*} \mathrm{Vect}(X) \Bigr),
\]
where $F$ is the absolute Frobenius and $\mathrm{Vect}(X)$ denotes the $\infty$-groupoid of vector bundles on $X$. Concretely, an object of $\mathrm{Vect}^{\perf}(X)$ is a compatible sequence of vector bundles $\{E_n\}$ on $X$ together with isomorphisms
\[
\varphi_n : F^*(E_{n+1}) \xrightarrow{\sim} E_n.
\]

By a theorem of Halpern-Leistner and Preygel \cite{Halpern-Leistner-Preygel2023}, the functor 
$\mathrm{Vect}(-)$ is an \(h\)-sheaf on locally Noetherian derived stacks.
 Since $\mathrm{Vect}^{\perf}(-)$ is defined as a homotopy limit of copies of $\mathrm{Vect}(-)$ along Frobenius, it follows formally that $\mathrm{Vect}^{\perf}(-)$ also satisfies $h$-descent on derived schemes.

It remains to compare the derived and classical situations. Let $X$ be a
locally Noetherian derived $\FF_p$-scheme with classical truncation $X_0$. We claim that
\[
\mathrm{Vect}^{\perf}(X) \simeq \mathrm{Vect}^{\perf}(X_0).
\]
At first glance, there is a mismatch: the left-hand side is an $\infty$-groupoid (moduli of $F$-divided bundles up to isomorphism), while the right-hand side is usually defined as a category of $F$-divided sheaves with all morphisms. However, the equivalence can be understood at the groupoid level, and in fact also holds at the categorical level. The latter follows by analyzing morphisms: if $A,B$ are vector bundles, then
\[
\underline{\mathrm{Hom}}(A,B) \cong A^\vee \otimes B,
\]
and this tensor construction is compatible with passage from $X$ to $X_0$ by Zariski descent.

To check the claim affine locally, let $R$ be an animated $\FF_p$-algebra. Suppose $\{M_n\} \in \lim_F \mathrm{Vect}(R)$ is an $F$-divided vector bundle with image $\{Q_n\} \in \lim_F \mathrm{Vect}(\pi_0(R))$. Then the natural map
\[
\mathbf{R}\!\lim_n M_n \;\;\longrightarrow\;\; \mathbf{R}\!\lim_n Q_n
\]
is an isomorphism. Writing $R \simeq \varprojlim R_m$ as an inverse limit of
its Postnikov truncations $R_m = \tau_{\leq m}R$, it suffices to prove the
analogous statement for truncated rings $R_m$. For truncated $R_m$, the
Frobenius endomorphism factors through the projection $R_{m+1} \to R_m$. This
Frobenius factorization property (see \cite[§11, proof of
Thm.~11.6]{Bhatt-Scholze2017}) shows that the $F$-divided structure depends only on $\pi_0(R)$. Consequently, the derived and classical limits agree.

Thus $\mathrm{Vect}^{\perf}(X) \simeq \mathrm{Vect}^{\perf}(X_0)$ for any derived $X$, and since $\mathrm{Vect}^{\perf}(-)$ satisfies $h$-descent on derived schemes, it also satisfies $h$-descent on classical $\FF_p$-schemes.
\end{proof}
}

Let us also note the following generalization of  {\cite[Proposition 3.3,
(ii)]{Esnault-Srinivas2016}:}

\begin{Lemma}\label{passing-to-universal-homeomorphism}
    Let $f\colon Y\to X$ be a  finite
    universal homeomorphism of { locally Noetherian $\FF_p$-algebraic stacks. Then
    the pullback by $f$ induces an}
    equivalence of categories ${\sf Vect} ^{\sf perf}(X)\simeq {\sf Vect} ^{\sf
    perf}(Y)$. 
\end{Lemma}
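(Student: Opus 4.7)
The strategy rests on the classical observation that in characteristic $p$ every finite universal homeomorphism is ``inverted'' by some power of the absolute Frobenius. Concretely, I would first establish that there exists an integer $n\geq 1$ and a morphism $g\colon X\to Y$ of $\FF_p$-algebraic stacks such that
\[
f\circ g=F_X^n\qquad\text{and}\qquad g\circ f=F_Y^n.
\]
In the affine case $X=\Spec A$, $Y=\Spec B$, finite universal homeomorphy forces $b^{p^n}\in A$ for every $b\in B$ and some $n$, so $g^{\#}\colon B\to A,\ b\mapsto b^{p^n}$, defines the required ring map, and a direct check gives the two factorizations. For general $X$ and $Y$, the construction is compatible with smooth descent, so it extends to algebraic stacks after passing to a smooth atlas by Noetherianity (choosing $n$ uniformly on a cover).

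Next, I would observe that pullback along $f$ induces a well-defined functor $f^{*}\colon \Vect^{\perf}(X)\to\Vect^{\perf}(Y)$ via the canonical base-change isomorphism $f^{*}F_X^{*}\cong F_Y^{*}f^{*}$; similarly for $g^{*}\colon\Vect^{\perf}(Y)\to\Vect^{\perf}(X)$. The crucial structural fact is that the Frobenius pullback $F^{*}$ is an autoequivalence of $\Vect^{\perf}(-)$, with an explicit quasi-inverse given on objects by the shift $\{E_i,\sigma_i\}\mapsto\{E_{i+1},\sigma_{i+1}\}$ (the structure isomorphisms $\sigma_i\colon F^{*}E_{i+1}\xrightarrow{\sim}E_i$ trivialize the composition). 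Hence $(F_X^{n})^{*}$ and $(F_Y^{n})^{*}$ are autoequivalences of $\Vect^{\perf}(X)$ and $\Vect^{\perf}(Y)$ respectively.

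Combining the factorizations of step one with functoriality of pullback gives
\[
g^{*}\circ f^{*}=(f\circ g)^{*}=(F_X^{n})^{*},\qquad f^{*}\circ g^{*}=(g\circ f)^{*}=(F_Y^{n})^{*}.
\]
Setting $G:=((F_X^{n})^{*})^{-1}\circ g^{*}$ and using the compatibility $f^{*}\circ (F_X^{n})^{*}=(F_Y^{n})^{*}\circ f^{*}$ (from base change applied to Frobenius), one checks directly that $G\circ f^{*}\cong\id$ and $f^{*}\circ G\cong\id$, so $G$ is a quasi-inverse to $f^{*}$, proving the equivalence.

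The main technical point to verify carefully is the first step: while the affine case is immediate, one must check that the local Frobenius factorizations glue across a smooth atlas and give a morphism $g$ of stacks satisfying both identities. Everything after that is formal manipulation of the autoequivalence property of Frobenius on $\Vect^{\perf}$; once the factorization is in place, neither essential surjectivity nor fully faithfulness requires any further geometric input.
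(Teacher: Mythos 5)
Your proof is correct in its essentials, but it takes a genuinely different route from the paper's. The paper first reduces from stacks to schemes via a groupoid presentation and fppf descent of $\Vect^{\perf}$, then observes that a finite universal homeomorphism is an $h$-covering whose diagonal and double diagonal are monomorphic $h$-coverings, and concludes by combining Bhatt's $h$-descent theorem (Theorem \ref{h-descent-for-F-divided}) with the formal Lemma \ref{when the pullback in a stack is an equivalence}. Your route is the classical one --- essentially the argument behind \cite[Proposition 3.3(ii)]{Esnault-Srinivas2016}, which this lemma generalizes: sandwich $f$ between powers of Frobenius and use that $F^{*}$ is an autoequivalence of $\Vect^{\perf}$ via the shift. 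Your approach is more elementary (no derived $h$-descent is needed), but it front-loads the work into the factorization $f\circ g=F_X^n$, $g\circ f=F_Y^n$, which is exactly where finiteness and Noetherianity enter and which deserves a careful proof or reference. Three points to tighten: (i) a uniform $n$ exists only on quasi-compact pieces, so for merely locally Noetherian $X$ you should verify the equivalence Zariski-locally (legitimate, since $f^{*}$ is globally defined and $\Vect^{\perf}$ is a Zariski stack); (ii) if $f^{\#}$ is not injective, its kernel is nilpotent (by surjectivity of $f$ and Noetherianity), and you must absorb it by a further Frobenius twist before $b\mapsto b^{p^n}$ is well defined; (iii) for stacks it is cleaner to reduce to schemes first, as the paper does, rather than to glue $g$ over a smooth atlas. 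With these repairs your argument goes through; the paper's proof trades this commutative algebra for the much heavier $h$-descent machinery, which it needs elsewhere anyway.
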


\begin{proof} { The map $f$ induces a functor $\Hom_{\FF_p}(X,\Vect^\perf)\to
    \Hom_{\FF_p}(Y,\Vect^\perf)$, and we have to show that it is an
    equivalence. Choose any groupoid presentation $R\rightrightarrows U$ of $X$
    by algebraic spaces. Using the fact that $\Vect^\perf$ is a stack in the
    fppf-topology, we are reduced to the case where $X$ is an algebraic space,
    and similarly, we can further assume that $X$ is a scheme. As $f$ is
finite, $Y$ is also a scheme.}

        In light of \cite[Definition
        \href{https://stacks.math.columbia.edu/tag/0ETS}{0ETS}]{StacksProject}, the morphism
        $f$ is an $h$-covering. The
        fact that the (higher) diagonals of $f$ are closed immersions of finite
        presentation implies
        that they are also $h$-coverings and monomorphisms.
        We now apply Lemma \ref{when the pullback in a stack is an equivalence}
        to $\cX\coloneqq\Vect^\perf(-)$ and the category  $\cC$ of
        $\FF_p$-schemes equipped with the $h$-topology.
\end{proof}

{
\begin{Lemma}\label{when the pullback in a stack is an equivalence} Let $\cC$ be a site, and let $f\colon Y\to X$ be a covering in
    $\cC$ such that the diagonals $\Delta\colon Y\to Y\times_XY$ and
    $\Delta_2\colon Y\to Y\times_XY\times_XY$ are coverings and monomorphisms.
    Then for any stack $\cX$ on $\cC$, the pullback functor $\cX(X)\to\cX(Y)$
    is an equivalence.
\end{Lemma}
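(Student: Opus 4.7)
The plan is to combine the stack property of $\cX$ applied to the covering $f$ with the stack property applied to the diagonals $\Delta$ and $\Delta_2$. By the stack condition for $f$, the pullback $\cX(X)\to\cX(Y)$ factors through an equivalence onto the category of descent data: pairs $(E,\phi)$ with $E\in\cX(Y)$ and an isomorphism $\phi\colon p_1^*E\xrightarrow{\sim}p_2^*E$ in $\cX(Y\times_XY)$ satisfying the cocycle identity in $\cX(Y\times_XY\times_XY)$. It therefore suffices to show that the forgetful functor $(E,\phi)\mapsto E$ from descent data to $\cX(Y)$ is an equivalence.

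To do so, I would first exploit the hypothesis that $\Delta\colon Y\to Y\times_XY$ is a monomorphism. This forces $Y\times_{Y\times_XY}Y=Y$, so the stack condition of $\cX$ applied to the covering $\Delta$ involves no nontrivial gluing data, and pullback along $\Delta$ gives an equivalence $\Delta^*\colon\cX(Y\times_XY)\xrightarrow{\sim}\cX(Y)$. The same argument applied to $\Delta_2$ yields an equivalence $\Delta_2^*\colon\cX(Y\times_XY\times_XY)\xrightarrow{\sim}\cX(Y)$. Since $p_1\circ\Delta=p_2\circ\Delta=\id_Y$, both $p_1^*E$ and $p_2^*E$ are sent to $E$ under $\Delta^*$, so $\phi$ is encoded by a single automorphism $\tilde\phi\colon E\to E$ in $\cX(Y)$.

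Finally I would pull back the cocycle identity along $\Delta_2$. Using $p_{12}\circ\Delta_2=p_{13}\circ\Delta_2=p_{23}\circ\Delta_2=\Delta$, the identity $p_{23}^*\phi\circ p_{12}^*\phi=p_{13}^*\phi$ becomes $\tilde\phi\circ\tilde\phi=\tilde\phi$ in $\cX(Y)$, and since $\tilde\phi$ is an isomorphism we conclude $\tilde\phi=\id_E$. Hence every descent datum is trivial, the forgetful functor is an equivalence, and so is $\cX(X)\to\cX(Y)$.

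The main point requiring care is the collapse of the descent category for $\Delta$: one must verify cleanly that a covering which is simultaneously a monomorphism renders the stack condition vacuous beyond the choice of an object, which is precisely where both hypotheses on $\Delta$ (and, at the cocycle level, on $\Delta_2$) intervene. The remainder of the argument is formal $2$-categorical bookkeeping with pullback functors.
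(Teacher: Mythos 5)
Your proposal is correct and follows essentially the same route as the paper's own proof: reduce to showing the forgetful functor from descent data to $\cX(Y)$ is an equivalence, use the monomorphism-plus-covering hypothesis to get the equivalences $\Delta^*$ and $\Delta_2^*$, and pull back the cocycle condition along $\Delta_2$ to force $\Delta^*(\phi)=\id_E$, whence $\phi$ is uniquely determined. The only difference is cosmetic phrasing of the final bookkeeping.
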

\begin{proof} Since $\Delta$ and $\Delta_2$ are monomorphisms, their diagonals are isomorphisms. Given that they are also coverings and that $\cX$ is a stack, it follows that both induce equivalences of categories:
    \begin{equation}\label{pullback along the diagonals are equivalences}
\Delta^*\colon\cX(Y \times_X Y) \simeq \cX(Y) \quad \text{and} \quad \Delta_2^*\colon\cX(Y \times_X Y \times_X Y) \simeq \cX(Y).
\end{equation}
    Since $\cX$ is a stack for the topology on $\cC$, it satisfies descent
    along the covering $f : Y \to X$. Consequently, the category $\cX(X)$ is
    equivalent to the category $\DD(f)$ of descent data relative to $f$. An
    object of $\DD(f)$ is a pair $(E, \phi)$ where $E \in \cX(Y)$ and $\phi:
    p_1^*E \xrightarrow{\sim} p_2^*E$ is an isomorphism in $\cX(Y \times_X Y)$
    satisfying the cocycle condition {
\begin{equation}\label{cocycle condition} 
        p_{23}^*(\phi) \circ p_{12}^*(\phi) = p_{13}^*(\phi)
        \end{equation}in
    $\cX(Y \times_X Y \times_X Y)$. 
Applying $\Delta_2^*$ to \eqref{cocycle
condition}, we get \(\Delta_2^*p_{23}^*(\phi) \circ \Delta_2^*p_{12}^*(\phi) =
\Delta_2^*p_{13}^*(\phi)\). This is nothing but
$\Delta^*(\phi)\circ\Delta^*(\phi)=\Delta^*(\phi)$. Thus
$\Delta^*(\phi)=\id_E$.}

Combined with the earlier result
\eqref{pullback along the diagonals are equivalences} -- which, by the definition
of $\Delta$, states that pullback along $\Delta$ induces an equivalence $\cX(Y
\times_X Y) \simeq \cX(Y)$ -- this implies that the isomorphism $\phi$ is uniquely determined. Therefore, every object $E \in \cX(Y)$ admits a descent datum, and this datum is unique up to unique isomorphism.
It follows that the forgetful functor $\DD(f) \to \cX(Y)$, which sends $(E, \phi)$ to $E$, is an equivalence of categories. Since $\cX(X) \simeq \DD(f)$, we conclude that the pullback functor $\cX(X) \to \cX(Y)$ is also an equivalence.
\end{proof}
}

As a special case we get the following corollary 
(see also \cite[Proposition 1.5]{Gieseker1975} or \cite[Lemma 1.1]{Bhatt}).
{ Recall a closed immersion $Y\subset X$ is called a thickening if it is a
surjective
(cf.~\cite[\href{https://stacks.math.columbia.edu/tag/04EW}{04EW}]{StacksProject}).}

\begin{Lemma} \label{lem:finiteness}
    If { $Y\subset X$ is a  thickening} of { locally} Noetherian $\FF_p$-schemes
   then the restriction gives rise to an equivalence of categories 
	${\sf Vect} ^{\sf perf}(X)\simeq {\sf Vect} ^{\sf perf}(Y)$.
\end{Lemma}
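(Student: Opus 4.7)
The plan is to deduce this directly from Lemma~\ref{passing-to-universal-homeomorphism}, which already handles finite universal homeomorphisms between locally Noetherian $\FF_p$-algebraic stacks. Thus the task reduces to verifying that a thickening of locally Noetherian schemes is automatically a finite universal homeomorphism.

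First I would show that the ideal sheaf $\mathcal{I}$ cutting out $Y$ in $X$ is locally nilpotent. Working affine-locally, write $X=\Spec A$ with $A$ Noetherian and $Y=\Spec(A/I)$; the surjectivity of the underlying map on points translates into $V(I)=\Spec A$, i.e.~$I$ is contained in the nilradical of $A$. Since $A$ is Noetherian, its nilradical is nilpotent, hence $I^n=0$ for some $n\ge 1$. This is the one place where the locally Noetherian hypothesis is genuinely used, upgrading the topological definition of a thickening to algebraic nilpotence.

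From local nilpotence of $\mathcal{I}$ I would read off both required properties of $f\colon Y\to X$. Finiteness is immediate, as $A/I$ is generated by $1$ as an $A$-module. For the universal homeomorphism property, I would note that for any morphism $X'\to X$, the base change $Y\times_X X'\to X'$ is again a closed immersion whose ideal $\mathcal{I}\cdot \cO_{X'}$ satisfies $(\mathcal{I}\cdot \cO_{X'})^n \subseteq \mathcal{I}^n\cdot \cO_{X'}=0$, so it is a surjective closed immersion, hence a homeomorphism. At this point Lemma~\ref{passing-to-universal-homeomorphism} directly delivers the equivalence $\Vect^\perf(X)\simeq \Vect^\perf(Y)$.

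I do not anticipate any serious obstacle here: the substantive ingredients -- in particular Bhatt's $h$-descent Theorem~\ref{h-descent-for-F-divided} -- are already encapsulated in Lemma~\ref{passing-to-universal-homeomorphism}, and what remains is the elementary commutative-algebra fact that the nilradical of a Noetherian ring is nilpotent. The only conceptual point worth highlighting in the write-up is the reduction from the global topological condition (surjective closed immersion) to the local algebraic condition (nilpotent ideal), which is precisely what allows stability of surjectivity under arbitrary base change and thereby the universal homeomorphism property.
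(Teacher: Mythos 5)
Your proposal is correct and follows exactly the paper's route: the paper states Lemma~\ref{lem:finiteness} as a special case of Lemma~\ref{passing-to-universal-homeomorphism}, leaving implicit the verification that a thickening of locally Noetherian schemes is a finite universal homeomorphism, which you supply correctly. (Minor aside: universal surjectivity already follows from $\mathcal{I}$ lying in the nilradical without invoking Noetherian nilpotence, but your version is equally valid.)
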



{ \subsection{Reflexive $F$-divided sheaves on normal algebraic stacks}}
Let us assume now that $X$ is a normal locally Noetherian algebraic stack. { Note that since
$X$ is clearly locally irreducible, i.e. every point admits an open irreducible
neighbourhood, if it is connected, then it is irreducible.}
Let  ${\sf Ref} (X)$ denote the category of coherent reflexive $\cO_X$-modules. It is a full subcategory of $\Coh(X)$ and   the inclusion functor ${\sf Ref} (X)\subset {\sf Coh} (X)$ comes with a left adjoint $(\cdot )^{**}:{\sf Coh} (X)\to {\sf Ref} (X)$ given by the reflexive hull.
The composition
$${\sf Ref} (X)\subset{\sf Coh} (X) \mathop{\rightarrow}^{F_X^*}{\sf Coh} (X)\mathop{\rightarrow}^{(\cdot )^{**}}{\sf Ref} (X)$$
is denoted by $F_X^{[*]}$. 
The composition
$${\sf Ref} (X)\subset{\sf Coh} (X) \mathop{\rightarrow}^{(F_X^n)^*}{\sf Coh} (X)\mathop{\rightarrow}^{(\cdot )^{**}}{\sf Ref} (X)$$
is denoted by $F_X^{[n]}$.
As above we define the category of $F$-divided reflexive sheaves  ${\sf Ref} ^{\sf perf}(X)$  as 
$$\lim \left(... {\sf Ref} (X)\mathop{\rightarrow}^{F_X^{[*]}}{\sf Ref} (X)\mathop{\rightarrow}^{F_X^{[*]}}{\sf Ref} (X)\right).$$
If $X$ is regular then $F_X$ is flat and $F_X^{[*]}: {\sf Ref} (X)\to {\sf Ref} (X)$ is 
the restriction of $F_X^*: {\sf Coh} (X)\to {\sf Coh} (X)$. In this case ${\sf Ref} ^{\sf perf}(X)$ is a subcategory of $ {\sf Coh} ^{\sf perf}(X)$ and hence by Proposition \ref{local-freeness-F-div-sheaves} it is equivalent to  $ {\sf Coh} ^{\sf perf}(X)$.
In general, since ${\sf Vect} (X)$ is a subcategory of ${\sf Ref} (X)$ and the restriction of $F_X^{[*]}$ to ${\sf Vect} (X)$ coincides with $F_X^{*}$,   ${\sf Vect} ^{\sf perf}(X)$ is a subcategory of ${\sf Ref} ^{\sf perf}(X)$.
As seen below these categories are in general not equivalent.

Let us recall that by Kunz's theorem any { locally} Noetherian $F$-finite $\FF_p$-scheme $Y$ is excellent (see \cite[Theorem 10.5]{Ma-Polstra}). In particular, the regular locus $Y_{\sf reg}$ of $Y$ is open in $Y$. 
{ For such a scheme $Y$,  by \cite[\href{https://stacks.math.columbia.edu/tag/0EBJ}{0EBJ}]{StacksProject} the adjoint pair $(j^*, j_*)$ induces mutually quasi-inverse equivalences between ${\sf Ref}(Y)$ and ${\sf Ref}(Y_{\sf reg})$, where $j\colon Y_{\sf reg} \hookrightarrow Y$ denotes the inclusion. 

This result extends to normal locally Noetherian weakly $F$-finite algebraic stacks. Let $Y$ be such a stack and let $u\colon U \twoheadrightarrow Y$ be a smooth atlas with $U$ a locally Noetherian $F$-finite scheme. We define the regular locus of $Y$ by $Y_{\sf reg} \coloneqq u(U_{\sf reg})$, where $U_{\sf reg}$ is the (open) regular locus of $U$; note that $U_{\sf reg} = u^{-1}(Y_{\sf reg})$ and this definition is independent of the choice of atlas.

The equivalence for stacks follows from the scheme case because:
(1) $u^*$ commutes with sheaf Hom, so a coherent sheaf $E$ on $Y$ is reflexive if and only if $u^*E$ is reflexive on $U$;
(2) Cohomology commutes with base change along the smooth morphism $u$.
Therefore, the adjoint pair $(j^*, j_*)$ induces mutually quasi-inverse equivalences between ${\sf Ref}(Y)$ and ${\sf Ref}(Y_{\sf reg})$.
gives the stacky version as you complained about Frobenius on stacks.}

\begin{Lemma}\label{description-of-Ref-perf} { Let $X$ be a locally
    Noetherian normal weakly $F$-finite $\FF_p$-algebraic stack}
 and let $j\colon X_{\sf reg}\subset X$ denote the canonical open embedding. Then the restriction $j^*$ defines an equivalence of categories
${\sf Ref} ^{\sf perf}(X)\to {\sf Vect} ^{\sf perf}(X_{\sf reg})$.  In
particular, $\Reff^\perf(X)$ is an abelian category.
\end{Lemma}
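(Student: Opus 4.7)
The plan is to use the equivalence $\Reff(X)\simeq\Reff(X_{\sf reg})$ (which is given by $(j^*,j_*)$ and is stated just before the lemma) and to lift it to the perfectoid level by showing that $j^*$ and $j_*$ commute with the reflexive Frobenius pullback functors. Then we identify the right-hand side using the fact that on the regular stack $X_{\sf reg}$ the Frobenius is flat, so reflexive $F$-divided sheaves and $F$-divided vector bundles coincide.

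First I would verify the naturality square: since the absolute Frobenius is a natural transformation of the identity functor on $\FF_p$-algebraic stacks, we have $F_X\circ j=j\circ F_{X_{\sf reg}}$. For $E\in\Reff(X)$ this gives $j^*F_X^*E\cong F_{X_{\sf reg}}^*j^*E$. Because $j$ is an open immersion, $j^*$ is exact, commutes with $\cHom$ of coherent sheaves (verified smooth-locally on a presentation), and hence commutes with the reflexive hull. Combining these,
\[ j^*F_X^{[*]}E=j^*(F_X^*E)^{**}\cong(F_{X_{\sf reg}}^*j^*E)^{**}=F_{X_{\sf reg}}^{[*]}(j^*E)=F_{X_{\sf reg}}^*(j^*E),\]
the last equality because $F_{X_{\sf reg}}$ is flat on the regular stack and preserves reflexivity. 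A symmetric argument works for $j_*$: for $G\in\Reff(X_{\sf reg})$ the sheaf $(F_X^*j_*G)^{**}$ is reflexive on $X$ and restricts to $F_{X_{\sf reg}}^*G$, while $j_*F_{X_{\sf reg}}^*G$ is another reflexive sheaf with the same restriction; by the uniqueness in the equivalence $\Reff(X)\simeq\Reff(X_{\sf reg})$, they are canonically isomorphic.

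Consequently the adjoint pair $(j^*,j_*)$ lifts to mutually quasi-inverse equivalences between the inverse limit categories
\[ \Reff^\perf(X)\;\simeq\;\Reff^\perf(X_{\sf reg}).\]
On the regular locus, by the discussion preceding the lemma, $F_{X_{\sf reg}}^{[*]}$ is just the ordinary flat pullback $F_{X_{\sf reg}}^*$ on $\Reff(X_{\sf reg})$, so $\Reff^\perf(X_{\sf reg})$ embeds into $\Coh^\perf(X_{\sf reg})$; by Proposition \ref{local-freeness-F-div-sheaves} the latter coincides with $\Vect^\perf(X_{\sf reg})$, and since every $F$-divided coherent sheaf is automatically a vector bundle, the embedding is an equivalence. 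Composing yields $\Reff^\perf(X)\simeq\Vect^\perf(X_{\sf reg})$.

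For the final assertion, note that $\Vect^\perf(X_{\sf reg})=\Coh^\perf(X_{\sf reg})$ is abelian: the Frobenius is flat on $X_{\sf reg}$, so the transition functors in the defining limit are exact, and a kernel/cokernel on each level is preserved by pulling back along flat $F_{X_{\sf reg}}^*$. Transporting the abelian structure across the equivalence $\Reff^\perf(X)\simeq\Vect^\perf(X_{\sf reg})$ proves that $\Reff^\perf(X)$ is abelian. The main technical point is the commutation $j^*(-)^{**}\cong(j^*-)^{**}$ and its compatibility with Frobenius in the stack setting, which reduces to a routine check on a smooth $F$-finite atlas of $X$.
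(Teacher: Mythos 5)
Your proposal is correct and follows essentially the same route as the paper: both arguments rest on the commutation of $j^*$ and $j_*$ with the (reflexive) Frobenius pullbacks, the flatness of Frobenius on $X_{\sf reg}$, and Proposition \ref{local-freeness-F-div-sheaves} to identify $F$-divided coherent sheaves on the regular locus with $F$-divided bundles. The only cosmetic difference is that you pass through $\Reff^\perf(X_{\sf reg})$ as an intermediate category, whereas the paper maps directly into $\Coh^\perf(X_{\sf reg})$ and takes $j_*\colon \Vect^\perf(X_{\sf reg})\to\Reff^\perf(X)$ as the quasi-inverse.
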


\begin{proof}
Note that in general restriction of a reflexive  $\cO_X$-module to $X_{\sf reg}$ need not be a vector bundle.
But it is certainly a coherent $\cO_{X_{\sf reg}}$-module and we have a well-defined functor 
$j^*: {\sf Ref} (X)\to {\sf Coh} (X_{\sf reg})$. Since the Frobenius morphism is flat on $X_{\sf reg}$, we have a commutative diagram
$$
\xymatrix{
{\sf Ref} (X) \ar[r]^{F_{X}^{[*]}} \ar[d]^{j^*}&{\sf Ref} (X)\ar[d]^{j^*}\\
{\sf Coh} (X_{\sf reg}) \ar[r]^{F_X^*}&{\sf Coh} (X_{\sf reg})}
$$
inducing the functor ${\sf Ref} ^{\sf perf}(X)\to {\sf Coh} ^{\sf perf}(X_{\sf reg})\simeq {\sf Vect} ^{\sf perf}(X_{\sf reg})$. To obtain a quasi-inverse note that $j_*$ defines a functor
${\sf Vect} (X_{\sf reg})\to {\sf Ref} (X)$. We also have a commutative diagram
$$
\xymatrix{
	{\sf Ref} (X) \ar[r]^{F_{X}^{[*]}} &{\sf Ref} (X)\\
	{\sf Vect} (X_{\sf reg}) \ar[r]^{F_X^*}\ar[u]^{j_*}  &{\sf Vect} (X_{\sf reg})
	\ar[u]^{j_*} 
	}
$$
inducing the functor  $j_*: {\sf Vect} ^{\sf perf}(X_{\sf reg}) \to {\sf Ref}
^{\sf perf}(X)$, which is the required quasi-inverse.
\end{proof}

{
\subsection{Tannakian property of $F$-divided sheaves}
}

\begin{Definition}
	Let $R$ be an $\FF_p$-algebra. The \emph{inverse limit perfection} is the inverse limit
	\[R^{\sf perf}=\lim \left(... \stackrel{F_R}{\longrightarrow}R
	\stackrel{F_R}{\longrightarrow}R
	\stackrel{F_R}{\longrightarrow}R\right)\] over the Frobenius maps.
\end{Definition}
This ring is clearly perfect.
If $R$ is reduced then $R^{\sf perf}=\bigcap _{m\ge 0}R^{p^m}$.

{

    \begin{Theorem}\label{F-divided-sheaves-form-Tannakian-category}
        Let $X$ be a weakly $F$-finite locally Noetherian connected {
            $\FF_p$-}algebraic
            stack. Then 
        \begin{enumerate}[label={\rm(\Alph*)}]
            \item $\Vect^\perf(X)=\Coh^\perf(X)$;
            \item { for any map $u\colon T\to X$, where $T$ is a nonempty scheme,
                the pullback functor $\Vect^\perf(X)\to \Vect (T)$,
                $\{E_i,\sigma_i\}_{i\in\NN}\mapsto u^*E_0$ is
            faithful;}
            \item $\End(\mathbbm{1}_X)=\cO_X(X)^\perf
                $ is a field, and it is the maximal perfect subfield contained
                in $\cO_X(X)$;
            \item $\Vect^\perf(X)$ is a Tannakian category over
                $\End(\mathbbm{1}_X)$;
            \item The Tannakian gerbe $\Pi_X^{\Fdiv}$ corresponding to $\Vect^\perf(X)$, is pro-smooth
                banded.
        \end{enumerate}
    \end{Theorem}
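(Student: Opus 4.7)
The plan is to dispatch (A)--(E) sequentially, relying on two standing reductions: by Lemma~\ref{passing-to-universal-homeomorphism} we may assume $X$ is reduced, and by the weakly $F$-finite hypothesis we may work locally on a smooth atlas $u_0\colon U\to X$ by an $F$-finite scheme, descending to $X$ via the fpqc stack property of $\Vect^\perf$. Item (A) is then immediate from Proposition~\ref{local-freeness-F-div-sheaves}.

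For (B), let $f=\{f_i\}\colon V\to W$ with $u^*f_0=0$. The key input is that Frobenius pullback is injective on morphism groups of vector bundles over a reduced $F$-finite scheme (locally the entries of $F^*f$ are the $p$-th powers of those of $f$). Pulling back the relation $F_X^*f_{i+1}=(\sigma'_i)^{-1}\circ f_i\circ\sigma_i$ to $T_{\sf red}$ and iterating this Frobenius injectivity yields $u^*f_i=0$ for every $i$. The main obstacle is then to deduce $f_0=0$ on $X$. Locally on an affine $\Spec R\subset U$ one can inductively choose adapted trivializations of the $V_i,W_i$ so that each $\sigma_i$ becomes the identity; in these bases $f_i=F_X^*f_{i+1}$, and the matrix of $f_0$ is the entrywise $p^n$-th power of that of $f_n$ for every $n$. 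Thus the entries of $f_0$ lie in $\bigcap_n R^{p^n}$. Granting (C), this intersection identifies with $R^\perf$ and is a field, so the entries of $f_0$ are constants, and $f_0$ is either identically zero or nowhere zero on each such chart. Vanishing at any point of the nonempty image $u(T)$ therefore propagates to $f_0\equiv 0$ on all of $X$ by connectedness, and Frobenius injectivity then gives $f_i=0$ for every $i$.

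For (C), an endomorphism of $\unit_X$ corresponds tautologically to a sequence $(s_i)\in\cO_X(X)^\NN$ with $s_{i+1}^p=s_i$, identifying $\End(\unit_X)$ with $\cO_X(X)^\perf$; maximality among perfect subrings is immediate, since any such subring consists of $p^n$-th powers for every $n$. To show this ring is a field, reduce to a local ring $(R,\mathfrak{m},k)$ on $U$: Krull's intersection theorem gives $\bigcap_n\mathfrak{m}^{p^n}=0$, embedding $R^\perf$ injectively into the residue field with image in the maximal perfect subfield, while connectedness of $X$ excludes nontrivial idempotents. For (D), tensor, dual and unit structures on $\Vect^\perf(X)$ are inherited componentwise; $\End(\unit_X)$ is a field by (C); abelianness follows from working on the regular locus of $U$ (where Frobenius is flat by Kunz, so $F$-divided kernels and cokernels exist) and descending by the stack property, in the spirit of Lemma~\ref{description-of-Ref-perf}; and the faithful fiber functor provided by (B) then permits Theorem~\ref{Tannakian Duality} to be invoked. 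For (E), $\Pi_X^{\Fdiv}$ is the cofiltered limit, over single $F$-divided bundles $V$, of finite-type Tannakian gerbes corresponding to the sub-Tannakian categories generated by $V$; each is represented by a monodromy group $G_V\subseteq\GL(V_0)$, and smoothness of $G_V$ over the perfect field $\End(\unit_X)$ reduces to reducedness, expected to follow from the infinite Frobenius descent forced by the $F$-divided structure on every tensor construction of $V$, along the lines of the pro-smoothness arguments of dos Santos in the smooth case.
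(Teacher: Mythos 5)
There are genuine gaps, concentrated in (B), the abelianness part of (D), and (E).

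For (B), your plan to ``inductively choose adapted trivializations of the $V_i,W_i$ so that each $\sigma_i$ becomes the identity'' does not work: a basis change $A$ of $E_{i+1}$ induces the entrywise $p$-th power matrix on $F_X^*E_{i+1}$, so normalizing $\sigma_i$ to the identity would require every entry of $\sigma_i^{-1}$ to be a $p$-th power in $R$, which fails in general. One can push a basis of $E_n$ down to $E_0$ for each fixed $n$, but the resulting basis of $E_0$ depends on $n$, so the conclusion that the entries of $f_0$ (in a single basis) lie in $\bigcap_n R^{p^n}$ is unjustified; the relation $f_i=\sigma_i'\circ F_X^*(f_{i+1})\circ\sigma_i^{-1}$ conjugates the $p^n$-th powers by arbitrary ring elements. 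You also invoke (C) inside (B). The paper avoids all of this: it first shows $\Coh^\perf(X)$ is abelian by constructing, for a morphism $f$, the cokernel termwise (right-exactness of $F_X^*$), noting each $\coker(f_i)$ is a vector bundle by (A), hence $\im(f_i)=\ker(F_i\to\coker(f_i))$ and $\ker(f_i)=\ker(E_i\to\im(f_i))$ are vector bundles as well; then $\im(f)$ is itself an $F$-divided bundle, so of constant rank on the connected $X$, and $u^*f_0=0$ forces that rank to be $0$, i.e. $f=0$. This also repairs your claim that $f_0$ is ``either identically zero or nowhere zero'': the correct reason is constancy of the rank of $\im(f)$, not constancy of matrix entries.

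For abelianness in (D), your proposed reduction to the regular locus of the atlas $U$ (where Frobenius is flat) followed by ``descending by the stack property'' is not available: $U_{\sf reg}\to X$ is not a covering when $X$ has non-regular points, and extending kernels/subobjects from a dense open is precisely the hard content of Lemma \ref{extending objects in general}, proved much later using Gabber's alterations and $h$-descent -- it cannot be used here. The elementary cokernel--image--kernel argument above (which only needs (A) and right-exactness of $F_X^*$) is what makes $\Coh^\perf(X)$ abelian without any regularity. Your identification of $\End(\unit_X)$ with $\cO_X(X)^\perf$ and the maximality argument in (C) are fine and close to the paper's, and the fiber functor in (D) via a section $T\to X$ matches the paper. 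For (E), however, ``expected to follow from the infinite Frobenius descent'' is not a proof; the paper obtains pro-smoothness of the band by citing \cite[Theorem 6.23 (1)]{TZ1}, and some such input is required.
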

    \begin{proof} 
        (A) is already Proposition \ref{local-freeness-F-div-sheaves}. Let $f=\{f_i\}_{i\in\ZZ_{\ge 0}}\colon \EE=\{E_i, \sigma _i\}_{i\in\NN}\to \FF=\{F_i, \tau_i\}_{i\in\NN}$ be a map in $\Coh^\perf(X)$. 
     Since the Frobenius pullback is right exact we can define the cokernel of $f$ by setting
        $\coker(f):=\{ \coker (f_i), \tau_i'\}_{i\in \ZZ_{\ge 0}}$, where
        isomorphisms $\tau_i'$ induced from $\tau_i$. Since each  $\coker
        (f_i)$   is a vector bundle by (A), we can also define the image of $f$
        as $\im(f)\coloneqq \{ \im(f_i), \tau_i|_{\im(f_i)}      \}
        _{i\in\ZZ_{\ge 0}}$.  This makes sense as $\im (f_i) =\ker(F_i\to \coker (f_i))$ is locally free. Now as each $\im(f_i)$ is locally free, we can construct the kernel of $f$ as $\ker(f)\coloneqq \{ \ker(f_i), \sigma_i|_{\ker(f_i)} \}_{i\in\ZZ_{\ge 0}}$.  
       This makes sense as $\ker(f_i) =\ker(E_i\to \im(f_i))$ is locally free. Therefore
        $\Coh^\perf(X)$ is abelian.

        Let $f\colon \EE\to \FF$ be a map in
        $\Vect^\perf(X)$. If the  pullback of $f$ to $T$ is the zero map, then $\im(f)$ is of rank 0,
        i.e. $f=0$. This proves (B). Suppose $\EE=\FF=\mathbbm{1}_X$. If $f\neq 0$,
        then $\ker(f), \coker(f)$ must of rank 0, so $f$ is an isomorphism.
        This shows that $k:=\End(\mathbbm{1}_X)$ is a field. From the very definition $k$ is the inverse limit
        perfection of $\cO_X(X)$. Hence $k$ is a perfect subfield of
        $\cO_X(X)$. Conversely, if $l\subseteq \cO_X(X)$ is a perfect subfield,
        then the pullback functor $\Coh(\Spec(l))=\Coh^\perf(\Spec(l))\to\Coh^\perf(X)$ is
        faithful, so $l=\End(\mathbbm{1}_l)\subseteq\End(\mathbbm{1})=k$, i.e.,
        $k$ is maximal. This yields (C).

{
          From (A) we see that $\Coh^\perf(X)$ is a rigid abelian tensor
          category. To prove (D), we just have to find a fiber functor. Since
          $X\neq\emptyset$, there is a section $s\in X(T)$ (or equivalently a
          1-morphism $s\colon T\to X$), where $T$ is a nonempty 
          affine scheme. Then the pullback $s^*\colon
          \Vect^\perf(X)\to\Vect(T)$ is $k$-linear, tensorial, exact and
          faithful -- hence a fiber functor.

        }

        (E) follows from \cite[Theorem 6.23
        (1)]{TZ1}. 
    \end{proof}

\begin{Remark}
	(D) in the above theorem generalizes \cite[Proposition 3.3 (i)]{Esnault-Srinivas2016}. We give a different proof of this result.
\end{Remark}

{
    \section{Local behavior and generic restrictions}\label{Local behavior
regular}
We now study the local properties of \(F\)-divided bundles over normal and
regular schemes, focusing on generic points. The main results of this section
establish ``extension of subobjects'' and ``full faithfulness'' for restriction
functors, showing that the global structure of an \(F\)-divided bundle is
frequently controlled by its restriction to a dense open set or the generic
point of a regular scheme.
}

\begin{Lemma}\label{perfection-for-normal-rings}
	Let $R$ be a Noetherian  geometrically unibranch integral $\FF_p$-algebra, and let $K$ denote the
	field of fractions of $R$. Then the	canonical map $R^{\sf perf}\to K^{\sf perf}$ is an isomorphism.
\end{Lemma}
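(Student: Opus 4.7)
The plan is to verify surjectivity of the canonical inclusion $R^{\sf perf} \hookrightarrow K^{\sf perf}$; injectivity is immediate from $R \hookrightarrow K$. Observe that $K^{\sf perf}$ is closed under taking $p^n$-th roots in $K$: if $a = b^p$ with $a \in K^{\sf perf}$, then Frobenius injectivity on $K$ implies that every $p^n$-th root of $a$ is the $p$-th power of a $p^{n-1}$-th root of $b$, so $b \in K^{\sf perf}$. Consequently surjectivity reduces to the inclusion $K^{\sf perf} \subseteq R$.

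I first establish the weaker inclusion $K^{\sf perf} \subseteq \tilde R$, where $\tilde R$ denotes the integral closure of $R$ in $K$. Since $\tilde R$ is a Krull domain, it coincides with the intersection $\bigcap_V V$ over the DVRs $V$ of $K$ lying over $R$, so a valuation computation suffices: for $a \in K^{\sf perf}$ with $a = b_n^{p^n}$ and any such $V$ with valuation $v$, we have $v(a) = p^n v(b_n)$ divisible by every $p^n$, forcing $v(a) = 0$ and hence $a \in V$. Next, the geometrically unibranch hypothesis implies that $\Spec \tilde R \to \Spec R$, integral and surjective, has singleton fibers with purely inseparable residue field extensions, making it a universal homeomorphism. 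In the $F$-finite setting relevant to this paper, $R$ is excellent by Kunz's theorem and $\tilde R$ is finite over $R$; then the standard fact that a finite universal homeomorphism of $\FF_p$-schemes satisfies $\tilde R^{p^N} \subseteq R$ for some $N \geq 0$ (e.g.\ Stacks project tag 0CCB) completes the setup. The argument then closes in one step: for $a \in K^{\sf perf}$, the $p^N$-th root $c = a^{1/p^N}$ still lies in $K^{\sf perf} \subseteq \tilde R$, so $a = c^{p^N} \in \tilde R^{p^N} \subseteq R$.

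The main obstacle is securing the uniform exponent $N$, which requires $\tilde R$ to be finite over $R$. This is automatic under $F$-finiteness by Kunz, the regime in which the lemma is applied in the paper. For a genuinely non-excellent Noetherian geometrically unibranch $R$, one would need to refine the argument by working instead with finite $R$-subalgebras of $\tilde R$ and exploiting that elements of $K^{\sf perf}$ have arbitrarily deep $p$-th roots already present in $\tilde R$; this subtlety does not affect the applications in this paper.
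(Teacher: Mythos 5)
Your proof is correct (modulo the caveat you yourself flag) and shares its engine with the paper's proof, but routes the reduction step differently. The paper first passes from $R$ to its normalization $\bar R$ by invoking the categorical equivalence $\Vect^{\perf}(X)\simeq\Vect^{\perf}(X^{\nu})$ for finite universal homeomorphisms (Lemma \ref{passing-to-universal-homeomorphism}, which ultimately rests on $h$-descent), concludes $R^{\perf}=\bar R^{\perf}$, and only then runs the valuation argument, writing the normal ring as $\bigcap_{\mathrm{ht}(\pp)=1}R_\pp$ and reducing to a DVR. You instead prove $K^{\perf}\subseteq\tilde R$ directly via Mori--Nagata (the integral closure is a Krull domain, hence an intersection of DVRs) with the same ``valuation is infinitely $p$-divisible, hence zero'' computation, and then descend from $\tilde R$ to $R$ by the elementary fact that a finite universal homeomorphism in characteristic $p$ satisfies $\tilde R^{p^N}\subseteq R$, combined with the closure of $K^{\perf}$ under $p$-th roots. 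This buys you a purely ring-theoretic argument that avoids the $F$-divided machinery entirely, which is arguably preferable for a lemma this early in the logical order. Note that both proofs hinge on the normalization being \emph{finite} over $R$: the paper extracts this from its citations to the Stacks project, while you derive it from excellence in the $F$-finite regime and honestly flag that a genuinely non-excellent geometrically unibranch Noetherian domain (where the normalization can fail to be module-finite) would require the refinement you sketch; as you observe, the uniform exponent $N$ is exactly what that refinement must supply, since the per-element exponents $n(b)$ lead to a circular bound. This caveat is shared with, not worse than, the paper's own argument, and is harmless for the applications, all of which occur in the $F$-finite setting.
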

\begin{proof} By \cite[Lemma \href{http://stacks.math.columbia.edu/tag/035R}{035R}
		and Lemma
		\href{http://stacks.math.columbia.edu/tag/0GIQ}{0GIQ}]{StacksProject}
		the normalization morphism $X^{\nu}\to X\coloneqq\Spec(R)$ is a finite
        universal homeomorphism.  Set $X^{\nu}=\Spec(\bar{R})$. 
		By Lemma \ref{passing-to-universal-homeomorphism}, $R^\perf=\bar{R}^\perf$,
    so we can assume that $R$ is normal. 
	Let $k\coloneqq K^\perf$ be the maximal perfect subfield of $K$.
	It is enough to show that $k\subseteq R$.
	Since $R$ is satisfies $S_2$, we have
	$R=\displaystyle\bigcap_{\textup{ht}(\pp)=1}R_\pp$, where $\pp$ runs over
	all height 1 prime ideals of $R$ (cf.~
	\cite[\href{https://stacks.math.columbia.edu/tag/031T}{Lemma 031T}]{StacksProject}). Replacing $R$ by $R_\pp$, we may assume 
	that $R$ is a DVR. If $a\in k\subseteq K$, then the valuation of $a$ must
	be 0, because it is infinitely $p$-divisible. Thus $a\in R$ as desired.
\end{proof}

\begin{Corollary}\label{endomorphism-sheaf-of-trivial-divided-line-bundle}
	Let $X$ be a  Noetherian geometrically unibranch integral $\FF_p$-scheme. Then
	the sheaf $\cEnd \unit_X$ of endomorphisms of $\unit_X$ in ${\sf Vect} ^{\sf perf}(X)$ is the constant sheaf associated to $\cO_X(X)^{\perf}$. Moreover,  $\cO_X(X)^{\perf}$ is canonically isomorphic to the inverse limit perfection $K(X)^{\perf}$ of the function field of $X$.
\end{Corollary}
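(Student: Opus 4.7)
The plan is to compute $(\cEnd \unit_X)(U)$ directly from the definition of endomorphisms in $\Vect^\perf(U)$, reduce via an affine cover to the algebraic statement of Lemma \ref{perfection-for-normal-rings}, and then assemble the result into the sheaf-theoretic claim.

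First I would unwind the definition. For any open $U \subseteq X$, the restriction $\unit_U$ is the unit of $\Vect^\perf(U)$, and an endomorphism of $\unit_U$ amounts to a sequence $(f_i)_{i \ge 0}$ with $f_i \in \cO_X(U)$ satisfying $f_{i+1}^p = f_i$ for every $i$; this is because under the canonical identifications $\sigma_i \colon F^*\cO_U \xrightarrow{\sim} \cO_U$, the Frobenius pullback acts on functions as $a \mapsto a^p$. Hence $(\cEnd \unit_X)(U) = \cO_X(U)^\perf$, the inverse limit perfection of $\cO_X(U)$.

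Next, I would show that $\cO_X(U)^\perf = K(X)^\perf$ for every non-empty open $U \subseteq X$. Since $X$ is integral, $\cO_X(U) \hookrightarrow K(X)$, and inverse limits preserve monomorphisms, so we obtain an inclusion $\cO_X(U)^\perf \hookrightarrow K(X)^\perf$. For the reverse direction, cover $U$ by non-empty affine opens $V_j = \Spec R_j$; each $R_j$ is an integral Noetherian geometrically unibranch $\FF_p$-algebra with fraction field $K(X)$ (since all these properties are local), so by Lemma \ref{perfection-for-normal-rings} the canonical map $R_j^\perf \xrightarrow{\sim} K(X)^\perf$ is an isomorphism. Given a $p$-compatible sequence $(a_n) \in K(X)^\perf$, this isomorphism forces every $a_n$ to lie in each $R_j = \cO_X(V_j)$, and the sheaf property of $\cO_X$ then yields $a_n \in \cO_X(U)$. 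Applying this to $U = X$ gives the identification $\cO_X(X)^\perf \cong K(X)^\perf$ asserted in the second half of the corollary.

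Finally, I would assemble the pieces. By the previous step, the presheaf $U \mapsto (\cEnd \unit_X)(U) = \cO_X(U)^\perf$ assigns the same ring $\cO_X(X)^\perf$ to every non-empty open, with identity restriction maps; since $X$ is integral hence irreducible, every non-empty open is connected, so this is precisely the constant sheaf associated to $\cO_X(X)^\perf$. The only subtle point in the whole argument is the compatibility of the inverse limit perfection with the intersection $\cO_X(U) = \bigcap_j \cO_X(V_j) \subseteq K(X)$, which however is automatic term-by-term; with this observation there is no genuine obstacle once Lemma \ref{perfection-for-normal-rings} is in hand.
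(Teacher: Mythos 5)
Your proof is correct and follows essentially the same route as the paper's: the paper's own (very terse) proof likewise combines the identification $(\cEnd \unit_X)(U)=\cO_X(U)^{\perf}$ with Lemma \ref{perfection-for-normal-rings}. Your write-up merely makes explicit the reduction to affine opens and the gluing step that the paper leaves implicit.
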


\begin{proof}
	The corollary follows immediately from Lemma \ref{perfection-for-normal-rings} and the fact that for any open $U\subset X$ we have $(\cEnd \unit_X) (U)=\cO_X(U)^{\perf}$.	
\end{proof}

\begin{Lemma}\label{extending objects in the regular case} Let $X$ be a
    Noetherian regular connected $F$-finite $\FF_p$-scheme and
	let $\eta : \Spec(K(X))\hookrightarrow X$ be the generic point. If
	$\EE\in\Vect^\perf(X)$, and if $\GG_\eta\subseteq \eta^*\EE$ is a subobject in
	$\Vect^\perf(\eta)$, then there exists a subobject $\GG\subseteq \EE\in\Vect^\perf(X)$ such that
	$\eta^*\GG=\GG_\eta$ as subobjects of $\eta^*\EE$. 
\end{Lemma}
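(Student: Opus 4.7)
My plan is to define, for each $i$, the subsheaf $G_i \subseteq E_i$ as the unique reflexive extension of a naturally defined subbundle on a big open subset of $X$, and then to verify the Frobenius compatibility via a codimension-one argument. Since $X$ is regular and $F$-finite, Kunz's theorem makes the absolute Frobenius $F_X$ flat on $X$, so $F_X^*$ is exact and (using $f^*\cHom=\cHom(f^*,\,f^*)$ for flat $f$) preserves reflexive coherent sheaves.

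First, since $\GG_\eta$ is $F$-divided, each $G_{\eta,i}$ has a common rank $r$, and the subspace $G_{\eta,i} \subseteq \eta^*E_i$ determines a $K(X)$-point of the proper Grassmannian $\Gr_r(E_i) \to X$. Applying the valuative criterion of properness at each height-one prime $\pp$ of $X$ (where $\cO_{X,\pp}$ is a DVR by regularity) and spreading out, this $K(X)$-point extends uniquely to a morphism $U_i \to \Gr_r(E_i)$ on a big open $U_i \subseteq X$ (that is, $X \setminus U_i$ has codimension $\ge 2$), yielding a rank-$r$ subbundle $G_i^\circ \subseteq E_i|_{U_i}$ extending $G_{\eta,i}$. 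Pushing forward along $U_i \hookrightarrow X$, I set $G_i$ to be the resulting sheaf; by the standard Hartogs-type extension on the regular scheme $X$, this is a coherent reflexive subsheaf of $E_i$ with generic fibre $G_{\eta,i}$.

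Second, I would prove $F_X^*G_{i+1} = G_i$ as subsheaves of $E_i$. By flatness of $F_X$, the sheaf $F_X^*G_{i+1}$ is a reflexive coherent subsheaf of $E_i$ with generic fibre $F_\eta^*G_{\eta,i+1} = G_{\eta,i}$. Both $G_i$ and $F_X^*G_{i+1}$ are locally free on big opens (since reflexive sheaves on regular schemes are locally free off a codimension-$\ge 2$ closed set), so on the intersection $V$ of these opens with $U_i$, both restrict to rank-$r$ subbundles of $E_i|_V$ extending $G_{\eta,i}$. The uniqueness from the first step forces $G_i|_V = F_X^*G_{i+1}|_V$, and since reflexive coherent sheaves on regular $X$ are determined by their restriction to any big open (via $j_*$), this equality propagates to $X$.

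Thus $\{G_i\}$ is an $F$-divided coherent subsheaf of $\EE$, and Proposition~\ref{local-freeness-F-div-sheaves} automatically upgrades each $G_i$ to a vector bundle, yielding the desired $\GG \in \Vect^\perf(X)$ with $\eta^*\GG = \GG_\eta$ by construction. The main obstacle is the globalisation in the second step: going from the big-open match to the global equality $F_X^*G_{i+1} = G_i$ uses crucially both the preservation of reflexivity under flat pullback (a consequence of Kunz flatness and the compatibility of $\cHom$ with flat base change) and the Hartogs extension principle on the regular scheme $X$, without which one would only obtain an agreement on a big open.
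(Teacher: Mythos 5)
Your construction is essentially correct, but it takes a genuinely different and considerably more laborious route than the paper. The paper's proof is a three-line categorical argument: since $F_X$ is flat (Kunz), $\QCoh^{\perf}(X)$ is abelian and $F_X^*$ commutes with $\eta_*$ by flat base change, so $\eta_*$ gives a functor $\QCoh^{\perf}(\eta)\to\QCoh^{\perf}(X)$; one then sets $\GG := \eta_*\GG_\eta \times_{\eta_*\eta^*\EE}\EE$, i.e.\ the preimage of $\eta_*\GG_\eta$ under $\EE \to \eta_*\eta^*\EE$, observes it is coherent by Noetherianity, and invokes Proposition~\ref{local-freeness-F-div-sheaves}. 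Unwinding, the paper's $G_i$ is the saturation $\ker\bigl(E_i \to \eta_*\eta^*E_i/\eta_*G_{\eta,i}\bigr)$ --- the same sheaf you produce via the Grassmannian and reflexive extension, but obtained with no valuative criterion, no Hartogs argument, and no separate Frobenius-compatibility check, because that compatibility is automatic from functoriality of the fibre product. Your approach buys geometric explicitness (and makes visible why the extension is unique), at the cost of several extra verifications.

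One of those verifications has a gap as written: in your second step you conclude that $F_X^*G_{i+1}$ restricts to a \emph{subbundle} of $E_i$ on a big open merely from the fact that it is reflexive, hence locally free off a closed set of codimension $\ge 2$. A locally free subsheaf of a vector bundle need not be a subbundle --- the quotient may have torsion, as with $\pi R \subset R$ over a DVR --- and only subsheaves with locally free quotient define sections of $\Gr_r(E_i)$, so the uniqueness of the Grassmannian section cannot be invoked at that point as stated. The repair is easy and uses your construction rather than abstract reflexivity: since $F_X$ is a homeomorphism, $F_X^{-1}(U_{i+1})=U_{i+1}$, and pulling back the short exact sequence $0\to G_{i+1}^\circ\to E_{i+1}|_{U_{i+1}}\to Q\to 0$ with $Q$ locally free along the flat map $F_X$ shows that $F_X^*G_{i+1}$ is a genuine subbundle of $E_i$ over the big open $U_{i+1}$. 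With that, your comparison of Grassmannian sections on $V=U_i\cap U_{i+1}$ and the propagation via $j_*$ go through, and the rest of the argument is sound.
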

\begin{proof} 
Since the Frobenius map $F_X$ is flat, the category ${\sf QCoh} ^{\sf perf}(X)$ is abelian. Moreover,
$F_X^*$ commutes with $\eta_*$ and hence $\eta_*$ induces a well defined
functor  ${\sf QCoh} ^{\sf perf}(\eta)\to {\sf QCoh} ^{\sf perf}(X)$.
So in the category ${\sf QCoh} ^{\sf perf}(X)$ we can define the subobject
$\GG$ of $\EE$ as $\eta_*\GG_\eta\times_{\eta_*\eta^*\EE} \EE$. Since $X$ is
Noetherian, $\GG$ lies in ${\sf Coh} ^{\sf perf}(X)$ and hence by Proposition
\ref{local-freeness-F-div-sheaves} also in  $\Vect^\perf(X)$.
\end{proof}

Let $\Pic _F(X)$ denotes the group of isomorphism classes of $F$-divided line bundles on $X$. 

\begin{Proposition}\label{injectivity-on-F-divided-Pic}
Let $X$ be a Noetherian normal integral $\FF_p$-scheme and let $\eta $ be the generic point of $X$. Then the restriction map $\Pic _F(X)\to \Pic _F(\eta)$ is injective. In particular, for any non-empty open subset $U\subset X$  the restriction map $\Pic _F(X)\to \Pic _F(U)$ is injective.
\end{Proposition}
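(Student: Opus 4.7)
\medskip
\noindent\textbf{Proof proposal.}
The plan is to reinterpret triviality at the generic point as a system of compatible rational sections and translate the $F$-divided structure into an infinite $p$-divisibility statement in the free abelian group of Weil divisors.

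Let $\mathcal{L}=\{L_i,\sigma_i\}_{i\ge 0}$ be an $F$-divided line bundle on $X$ with $\eta^*\mathcal{L}\cong \unit_\eta$ in $\Vect^{\sf perf}(\eta)$. Unwinding the definition, such an isomorphism is a system of $K(X)$-module isomorphisms $\phi_i\colon L_i|_\eta\xrightarrow{\sim}K(X)$ satisfying $\phi_i\circ(\sigma_i|_\eta)=F_\eta^*(\phi_{i+1})$. First I would reinterpret each $\phi_i$ as a nowhere-zero rational section $s_i$ of $L_i$ (namely $s_i=\phi_i^{-1}(1)$). Since $X$ is normal and Noetherian, every local ring at a height-one point is a DVR, so each $s_i$ has a well-defined Cartier divisor $D_i:=\Div(s_i)\in\Div(X)$ and the section $s_i$ induces an isomorphism $L_i\cong \cO_X(D_i)$ sending $s_i$ to the canonical rational section $1$.

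Next I would translate the compatibility $\phi_i\circ(\sigma_i|_\eta)=F_\eta^*\phi_{i+1}$. Under the identifications $L_i\cong\cO_X(D_i)$ and $F_X^*L_{i+1}\cong F_X^*\cO_X(D_{i+1})\cong\cO_X(pD_{i+1})$, and since the canonical rational section of $\cO_X(pD_{i+1})$ pulls back to $F_X^*s_{i+1}$, the isomorphism $\sigma_i\colon\cO_X(pD_{i+1})\xrightarrow{\sim}\cO_X(D_i)$ restricts to the identity of $K(X)$ at the generic point. An isomorphism $\cO_X(D)\to\cO_X(D')$ of line bundles on $X$ which equals the identity generically is possible only if $D=D'$ as Cartier divisors. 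Hence $D_i=pD_{i+1}$ in $\Div(X)$ for every $i\ge 0$, and therefore $D_0=p^i D_i$ is infinitely $p$-divisible in $\Div(X)$. Because $X$ is normal and Noetherian, $\Div(X)$ is a free abelian group on the set of prime Weil divisors of $X$; no nonzero element of a free abelian group is infinitely $p$-divisible, so $D_0=0$. The same argument gives $D_i=0$ for all $i$, so each $L_i\cong\cO_X$ via $s_i$, and under these trivializations each $\sigma_i$ becomes the identity. Thus $\mathcal{L}\cong\unit_X$ in $\Vect^{\sf perf}(X)$, proving injectivity of $\Pic_F(X)\to\Pic_F(\eta)$.

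For the ``in particular'' clause, the restriction map $\Pic_F(X)\to\Pic_F(\eta)$ factors through $\Pic_F(U)$ since $\eta$ is the generic point of $U$ as well; injectivity of the composite forces injectivity of $\Pic_F(X)\to\Pic_F(U)$.

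I expect the only delicate point to be the bookkeeping that turns the $F$-divided compatibility $\sigma_i\circ F_X^*(s_{i+1})=s_i$ into the exact equality $D_i=pD_{i+1}$ of divisors (rather than merely of divisor classes); once this is done carefully, the conclusion is a formal consequence of the absence of $p$-divisible elements in $\Div(X)$.
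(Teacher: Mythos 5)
Your proof is correct and is essentially the paper's argument in different clothing: the paper restricts to affine opens, writes $L_i = \bigcap_{\mathrm{ht}(\pp)=1} L_i \otimes_R R_\pp$ and deduces $v_\pp(l_i) = p\,v_\pp(l_{i+1})$ at each height-one prime, while you assemble exactly these valuations into the divisor $D_i = \Div (s_i)$ and phrase the same infinite $p$-divisibility as $D_0 = p^i D_i$ in the free abelian group of Weil divisors. The only step worth writing out carefully is the identification $F_X^*\cO_X(D) \cong \cO_X(pD)$ compatibly with the generic trivializations, which is the divisor-language counterpart of the paper's computation $u_i\, l_{i+1}^p = l_i$ in a DVR.
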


\begin{proof}
Let $\LL$ be an $F$-divided line bundle in $\Vect^\perf(X)$. We need to show that  an isomorphism $\alpha: \mathbbm{1}_{\eta}\cong\LL _{\eta}$ extends to an isomorphism $\mathbbm{1}_X\cong \LL$.
Since $X$ is integral, the restriction $\Vect(X)\to \Vect (\eta)$ is faithful, and hence $\Vect^\perf(X)\to \Vect ^{\perf}(\eta)$ is also faithful. So an extension of
$\alpha$, if it exists, is unique.  Hence it is sufficient to show that for
each affine open $V\subset X$ the isomorphism $\alpha$ extends (uniquely) to an
isomorphism  $\mathbbm{1}_V\cong \LL_V$. Thus in the following we can assume that $X=\Spec R$ is affine.

Let us write $\LL=\{L_i, \sigma_i\}$, where $L_i$ are projective  $R$-modules of rank $1$ and $\sigma _i:F_{R}^*L_{i+1}\to L_i$ are isomorphisms of $R$-modules. 
Let $K=\kappa (\eta)$ denote the function field of $X$. A morphism
$\mathbbm{1}_X \to \LL_X$  can be viewed as a sequence
$(l_i)_{i\in\NN}$ with $l_i\in L_i$ such that $\sigma_i(F_R^*l_{i+1})=l_i$.
Similarly the isomorphism $\alpha\colon\mathbbm{1}_\eta \to \LL_\eta$ can be
seen as a sequence  $(l_i)_{i\in\NN}$ with $l_i\in L_i\otimes_RK$ such that
$\sigma_{i,K}(F_K^*l_{i+1})=l_i$, where
$\sigma_{i,K}\coloneqq\sigma_i\otimes_RK$. To show that $\alpha$ extends to
$X$, it is enough to show that for all $i\ge 0$, $l_i$ lies in $L_i\subseteq L_i\otimes_RK$. 

Since $R$ is normal, each $L_i$ is a projective $R$-module of rank $1$, and $\Hom(-,-)$ commutes with limits in the second variable, we have
\begin{equation*}
\resizebox{0.9\textwidth}{!}{
\(\displaystyle L_i=L_i\otimes_RR=  L_i\otimes _R\left( \bigcap_{\textup{ht}(\pp)=1}R_\pp
\right)=\Hom _R\left( L_i^*,  \bigcap_{\textup{ht}(\pp)=1}R_\pp
\right)= \bigcap_{\textup{ht}(\pp)=1}\Hom _R(L_i^*, R_\pp ) =
\bigcap_{\textup{ht}(\pp)=1}L_i\otimes _R R_\pp  \hookrightarrow
L_i\otimes _RK,\)}
\end{equation*}
where $L_i^*\coloneqq\Hom _R(L_i,R)$.
Therefore it is sufficient to show that for every height $1$ prime ideal $\pp$
of $R$, $l_i$ lies in $L_i\otimes_RR_\pp$. So in the following we can assume
that $R$ is a DVR with a discrete valuation $v$, and we choose an $R$-module isomorphism
$L_i\cong R$ for each $i$.
In this case, $\sigma_i$  can be viewed as the multiplication by a unit $u_i\in R^*$.
Now the equality $\sigma_{i,K}(F_K^*l_{i+1})=l_i$ reads $u_i(l_{i+1}^p)=l_i$ in $K$.
This implies that $v(l_i)=pv(l_{i+1})$, hence $v(l_i)=p^nv(l_{i+n})$. This
shows that $v(l_i)=0$ for all $i$, or equivalently, $l_i\in R^*\subseteq R\cong
L_i$, as desired.  
\end{proof}

\begin{Remark}
Note that the above proof shows that an isomorphism $\alpha: \mathbbm{1}_{\eta} \cong  \LL _{\eta}$ extends to a unique isomorphism $\mathbbm{1}_{X}\cong \LL$. In particular, for $\LL=\unit_X$ this implies that the map $\End \unit_X\to \End \unit _{\eta}=K^{\sf perf }$ is an isomorphism. This gives another proof of  Lemma \ref{perfection-for-normal-rings} and Corollary \ref{endomorphism-sheaf-of-trivial-divided-line-bundle}. 	
\end{Remark}

The following lemma generalizes \cite[Lemma 2.5]{Kindler2015}. Note that due to  lack of local coordinates in our set-up,
the proof from \cite{Kindler2015} does not work  and we need a different approach (see however Remark \ref{Kindler-works-nonetheless}).

\begin{Lemma}\label{full-faithfullness-of-restriction-to-open-regular}
Let $X$ be an integral Noetherian regular $F$-finite $\FF_p$-scheme and let
$\eta$ be the generic point of $X$. Then the restriction
functor ${\sf Vect} ^{\sf perf}(X)\to {\sf Vect} ^{\sf perf}(\eta)$ is fully faithful.
\end{Lemma}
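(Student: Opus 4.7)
The plan is to reduce full faithfulness to a statement about extending compatible systems of generic sections, and then finish by a valuation argument at height-one primes.

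First I would exploit the rigid tensor structure on $\Vect^\perf(X)$. For $\EE,\FF\in\Vect^\perf(X)$, the internal Hom $\EE^\vee\otimes\FF$ is again an $F$-divided bundle, and its formation commutes with restriction to $\eta$. So it suffices to show, for every $\GG=\{G_i,\sigma_i\}\in\Vect^\perf(X)$, that the natural map $\Hom(\unit_X,\GG)\to\Hom(\unit_\eta,\GG|_\eta)$ is bijective. An element on either side is a compatible sequence $(s_i)_{i\ge 0}$ of sections with $\sigma_i(F^*s_{i+1})=s_i$. Injectivity is immediate: each $G_i$ is locally free by Proposition~\ref{local-freeness-F-div-sheaves}, and since $X$ is integral, $H^0(X,G_i)\hookrightarrow G_i\otimes_{\cO_X}K(X)$.

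For surjectivity, the claim is affine-local, so replace $X$ by an affine open $\Spec R$. Since $R$ is normal and each $G_i$ is finitely generated projective, the standard intersection formula
\[
G_i \;=\; \bigcap_{\mathrm{ht}(\pp)=1}(G_i)_\pp
\]
inside $G_i\otimes_R K$ (cf.\ the computation in Proposition~\ref{injectivity-on-F-divided-Pic}) reduces the problem to showing that a generic compatible system $(s_i)$ lies in $(G_i)_\pp$ for every height-one prime $\pp$. Replacing $R$ by $R_\pp$, we may assume that $R$ is a DVR with normalized valuation $v$.

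Over the DVR each $G_i$ is free of a common rank $n$. Fixing bases $G_i\cong R^n$, the isomorphism $\sigma_i$ corresponds to a matrix $U_i\in\mathrm{GL}_n(R)$, each $s_i$ is a column vector in $K^n$, and the compatibility reads $U_i\cdot s_{i+1}^{(p)}=s_i$, where $(-)^{(p)}$ denotes coordinate-wise $p$-th power (the effect of $F_R^*$ in a free basis). Extend $v$ to $K^n$ by $v(w):=\min_j v(w_j)$; this extended valuation is preserved by the $\mathrm{GL}_n(R)$-action (both $U_i$ and $U_i^{-1}$ are integral) and satisfies $v(w^{(p)})=p\,v(w)$. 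Combining these gives $v(s_i)=p\,v(s_{i+1})$, hence $v(s_0)=p^k\,v(s_k)$ for all $k\ge 0$. Since every $v(s_k)\in\ZZ\cup\{+\infty\}$, either the sequence is identically zero (using the injectivity of the Frobenius on a free module over a reduced ring) or $v(s_0)$ is infinitely $p$-divisible in $\ZZ$, which forces $v(s_i)=0$ for all $i$. In either case $s_i\in R^n=G_i$, and the extension exists.

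The only genuine technical point is the DVR step: one must identify $F_R^*$ in a free basis so that $v(w^{(p)})=p\,v(w)$ and note that $\mathrm{GL}_n(R)$ preserves the coordinate-min valuation. Everything else is a routine chain of reductions analogous to the proof of Proposition~\ref{injectivity-on-F-divided-Pic}.
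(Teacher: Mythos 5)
Your proof is correct, but it takes a genuinely different route from the paper. Both arguments begin identically, reducing full faithfulness via the internal Hom to the statement that horizontal sections of an $F$-divided bundle $\GG$ over $\eta$ extend to $X$. At that point the paper invokes Lemma \ref{extending objects in the regular case} to extend the image of a nonzero generic section $\unit_\eta\to\eta^*\GG$ to an $F$-divided line subbundle $\LL\subseteq\GG$, and then applies Proposition \ref{injectivity-on-F-divided-Pic} to conclude $\LL\cong\unit_X$; note that the extension lemma genuinely uses regularity, since it relies on flatness of Frobenius to make $\QCoh^\perf(X)$ abelian and to push forward along $\eta$. You instead bypass the subobject-extension step entirely and run the rank-$n$ version of the valuation computation from Proposition \ref{injectivity-on-F-divided-Pic} directly on the compatible system of sections: the reduction to height-one localizations via $G_i=\bigcap_{\mathrm{ht}(\pp)=1}(G_i)_\pp$, the identification of $F_R^*$ with coordinatewise $p$-th power in a free basis, the invariance of the coordinate-min valuation under $\GL_n(R)$, and the relation $v(s_i)=p\,v(s_{i+1})$ are all sound, and the conclusion $v(s_i)=0$ (or the identically zero case) gives integrality. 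What your approach buys is that it is more elementary and only uses normality of $X$ rather than regularity, so it proves full faithfulness of $\Vect^\perf(X)\to\Vect^\perf(\eta)$ directly for normal $F$-finite schemes — a case the paper reaches only later, by composing the regular case with the projection-formula argument over $X_{\sf reg}$. What it does not give is the extension-of-subobjects property, which the paper's route needs anyway for Corollary \ref{Kindler}, so the paper's factorization through Lemma \ref{extending objects in the regular case} is not wasted effort in context.
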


    \begin{proof} 
 We need to show that for all $\EE_1, \EE_2\in{\sf Vect} ^{\sf perf}(X) $ the restriction map
  $$\eta^*: \Hom _X (\EE_1, \EE_2)\to \Hom _{\eta} (\eta^*\EE_1, \eta^* \EE_2)$$
  is an isomorphism. This is equivalent to saying that for any $\EE\in{\sf Vect} ^{\sf perf}(X)$ 
 the restriction map defines an isomorphism between $\Hom_X(\unit_X, \EE )$   	
  and  $\Hom_\eta(\unit_\eta, \eta ^*\EE )$. Thanks to Lemma \ref{extending objects in the regular case},
        we are reduced to showing that if $\LL$ is a line bundle in
        $\Vect^\perf(X)$ whose restriction $\eta^*\LL\cong \mathbbm{1}_\eta$,
        then $\LL\cong \mathbbm{1}_X$. This follows from Proposition \ref{injectivity-on-F-divided-Pic}.
\end{proof}

\begin{Corollary}\label{Kindler}
Let $X$ be an integral Noetherian regular $F$-finite $\FF_p$-scheme, and let
 $\eta$ be the generic point of $X$. Then the induced $1$-morphism
$\Pi_\eta^{\Fdiv}\to \Pi_X^{\Fdiv}$ is a relative gerbe over
$K(X)^\perf$. In particular, for any dense open $U\subseteq X$, the
$1$-morphism $\Pi_U^{\Fdiv}\to \Pi_X^{\Fdiv}$ is a relative gerbe over
$K(X)^\perf$.
\end{Corollary}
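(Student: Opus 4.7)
The plan is to verify condition (4) of Lemma \ref{relative gerbe = surjective} for the 1-morphism $f\colon \Pi_\eta^{\Fdiv}\to \Pi_X^{\Fdiv}$, whose associated tensor functor $f^*\colon \Vect^\perf(X)\to \Vect^\perf(\eta)$ is precisely the generic restriction $\eta^*$. First I would observe that both gerbes are defined over the same base field $K(X)^\perf=\cO_X(X)^\perf$: equality of these two rings is Corollary \ref{endomorphism-sheaf-of-trivial-divided-line-bundle}, and by Theorem \ref{F-divided-sheaves-form-Tannakian-category} (C) this is exactly the band over which both $\Pi_X^{\Fdiv}$ and $\Pi_\eta^{\Fdiv}$ live.

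Next I would invoke the two preceding results as a direct input to Lemma \ref{relative gerbe = surjective} (4). Full faithfulness of $\eta^*$ is already Lemma \ref{full-faithfullness-of-restriction-to-open-regular}. The extension-of-subobjects condition -- every $\GG_\eta\subseteq \eta^*\EE$ in $\Vect^\perf(\eta)$ arises from some $\GG\subseteq \EE$ in $\Vect^\perf(X)$ -- is exactly the content of Lemma \ref{extending objects in the regular case}. Hence $f$ is a relative gerbe.

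For the ``in particular'' statement, let $U\subseteq X$ be a dense open subset. Since $X$ is regular and $F$-finite, so is $U$, and $U$ is integral Noetherian with the same generic point $\eta$ and the same perfect base field $K(U)^\perf=K(X)^\perf$. Applying what we just proved both to $X$ and to $U$, we obtain that the two maps in the factorization
\[
\Pi_\eta^{\Fdiv}\xrightarrow{\;g\;} \Pi_U^{\Fdiv}\xrightarrow{\;h\;} \Pi_X^{\Fdiv}
\]
are such that $g$ and $h\circ g$ are relative gerbes over $K(X)^\perf$. Translating to the level of automorphism group schemes via Lemma \ref{relative gerbe = surjective} (2), this says that after choosing a section over some field extension, the associated homomorphisms of affine group schemes $\Aut(\cdot)\xrightarrow{g_*}\Aut(\cdot)\xrightarrow{h_*}\Aut(\cdot)$ have $g_*$ and $h_*\circ g_*$ faithfully flat; a standard argument (the image of $h_*$ contains the image of $h_*\circ g_*$) then forces $h_*$ to be faithfully flat as well, so $h\colon\Pi_U^{\Fdiv}\to\Pi_X^{\Fdiv}$ is a relative gerbe.

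I expect no serious obstacle: the substantive work has been carried out in Lemmas \ref{extending objects in the regular case} and \ref{full-faithfullness-of-restriction-to-open-regular} and in the proof of Proposition \ref{injectivity-on-F-divided-Pic}. The one point that needs explicit attention is the passage between the two criteria in Lemma \ref{relative gerbe = surjective}, namely turning full faithfulness plus subobject extension into surjectivity of automorphism groups; but this is exactly the equivalence (3)$\Leftrightarrow$(4) established there, so no new input is required.
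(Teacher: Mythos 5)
Your proof is correct and follows essentially the same route as the paper, which deduces the corollary from exactly the references you cite: Corollary \ref{endomorphism-sheaf-of-trivial-divided-line-bundle} to identify the base field, Lemma \ref{extending objects in the regular case} for extension of subobjects, Lemma \ref{full-faithfullness-of-restriction-to-open-regular} for full faithfulness, and criterion (4) of Lemma \ref{relative gerbe = surjective}. Your explicit factorization argument for the ``in particular'' statement (deducing that $h$ is a relative gerbe from $g$ and $h\circ g$ being so, via surjectivity on automorphism group schemes) is a correct filling-in of a step the paper leaves implicit.
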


\begin{proof} This follows from Corollary \ref{endomorphism-sheaf-of-trivial-divided-line-bundle}, Lemma \ref{extending objects in the
        regular case}, Lemma \ref{full-faithfullness-of-restriction-to-open-regular}
        and Lemma \ref{relative gerbe = surjective}.
\end{proof}

\section{$D$-modules on $F$-finite schemes}

A large part of the theory related to differential operators and Cartier's descent is worked out for schemes that are smooth over an algebraically closed field. Here we extend this theory to regular $F$-finite schemes.

Let $R$ be a Noetherian  ring of prime characteristic $p>0$. 
We say that a finite set  $\{r_1,...,r_n\}$ of  elements of $R$ is a \emph{$p$-basis of $R$ (over $R^p$)} if they generate $R$ as a ring over $R^p$  and the monomials $\{r_1^{i_1}...r_n^{i_n}\}_{0\le i_j<p}$ are linearly independent over $R^p$. Then it is easy to see that $\Omega_R= \Omega_{R/\FF_p}\simeq \Omega_{R/R^p}$ is a free $R$-module with basis $\{ dr_1,...,dr_n\}$. If $R$ has a $p$-basis and it is reduced then by \cite[Theorem 2]{Tyc1988} $R$ is formally smooth over $\FF_p$. In particular, by \cite[\href{https://stacks.math.columbia.edu/tag/0H7U}{Theorem 0H7U}]{StacksProject} $R$ is regular.
By \cite[Corollary 3.2]{Kimura-Niitsuma1982}  any  Noetherian regular $F$-finite local $\FF_p$-algebra has a $p$-basis (see also the proof of Proposition \ref{existence-of-local-p-coordinates} for a different proof). 

\begin{Proposition}\label{Tyc}
	Let $R$ be a Noetherian  $\FF_p$-algebra. If there exist $r_1,...,r_n\in R$ such that
	$dr_1,...,dr_n$ is an $R$-basis of $\Omega_R$, then $R$ is $F$-finite and $\{r_1,...,r_n\}$ is a $p$-basis of $R$.
	If moreover $R$ is reduced then $R$ is regular.
\end{Proposition}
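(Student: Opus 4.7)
The regularity statement for reduced $R$ follows directly from the $p$-basis conclusion: Tyc's theorem \cite{Tyc1988} gives formal smoothness over $\FF_p$, and the Noetherian hypothesis combined with the cited Stacks Project reference yields regularity. So the real task is to prove $F$-finiteness and the $p$-basis property under the given hypothesis.

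The starting point is that the natural surjection $\Omega_{R/\FF_p} \to \Omega_{R/R^p}$ is an isomorphism in characteristic $p$, since $d(b^p) = p b^{p-1}\, db = 0$. Hence $\Omega_{R/R^p}$ is free of rank $n$ on $dr_1,\ldots,dr_n$, and the dual $R^p$-linear derivations $\partial_1,\ldots,\partial_n\colon R \to R$ satisfy $\partial_i(r_j) = \delta_{ij}$. A standard duality argument shows $[\partial_i,\partial_j] = 0$ and $\partial_i^p = 0$: both are $R^p$-linear derivations of $R$ (using that in characteristic $p$ the $p$-th iterate of a derivation is again a derivation), and they evaluate to zero on each $r_k$, forcing them to vanish by freeness.

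The core tool is an explicit slice theorem in characteristic $p$: for any char-$p$ ring $A$ with a derivation $\partial$ satisfying $\partial^p = 0$ and $\partial(r) = 1$ for some $r \in A$, the map
\[
(\ker\partial)[x]/(x^p - r^p) \xrightarrow{\;\cong\;} A, \qquad x \mapsto r,
\]
is a ring isomorphism, presenting $A$ as a free $(\ker\partial)$-module of rank $p$ on the basis $1, r, \ldots, r^{p-1}$. The inverse is built by a Taylor-type expansion: set $b_{p-1} := \partial^{p-1}(a)/(p-1)!$, then $b_{p-2} := \partial^{p-2}(a - b_{p-1}r^{p-1})/(p-2)!$, and so on, verifying $b_j \in \ker\partial$ using $\partial^p = 0$. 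Applied iteratively to the commuting pairs $(\partial_i, r_i)$, and using commutativity to ensure $\partial_j$ restricts to a derivation on each $\ker\partial_i$, one obtains an isomorphism
\[
R \cong R_n[x_1,\ldots,x_n]/(x_i^p - r_i^p)_{i=1}^n, \qquad R_n := \bigcap_{i=1}^n \ker\partial_i,
\]
presenting $R$ as a free $R_n$-module of rank $p^n$ on the monomial basis $\{r^I : I \in \{0,\ldots,p-1\}^n\}$. Since $R^p \subseteq R_n$, this immediately yields the $R^p$-linear independence half of the $p$-basis property.

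It remains to establish $R = R^p[r_1,\ldots,r_n]$, which, using the decomposition (and that the subring $R^p[r_1,\ldots,r_n]$ equals the $R^p$-span $\sum_I R^p\, r^I$ inside $R$), is equivalent to $R_n = R^p$. The inclusion $R^p \subseteq R_n$ is immediate; \textbf{the reverse inclusion is the main technical obstacle}. My plan is to localize at every prime $\mathfrak{p}$ of $R$: the hypothesis is preserved, so $R_\mathfrak{p}$ is Noetherian local with $\Omega_{R_\mathfrak{p}}$ free of rank $n$. In the local case, one can combine a Nakayama-type argument (reducing the elements $r_i$ modulo $\mathfrak{m}$ to identify which give a basis of $\Omega_{k/\FF_p}$ and which lie in $\mathfrak{m}/\mathfrak{m}^2$) with the Kimura--Niitsuma structure theorem \cite{Kimura-Niitsuma1982} to conclude $(R_n)_\mathfrak{p} = (R^p)_\mathfrak{p}$. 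One then ascends back to the global equality $R_n = R^p$ using that $\Spec R \to \Spec R^p$ is surjective by integrality of Frobenius, and that the slice decomposition presents $R$ as free (hence faithfully flat) over $R_n$. The non-reduced case requires some care with nilpotents, but Eakin's theorem ensures $R_n$ is itself Noetherian, which is enough to make the descent work.
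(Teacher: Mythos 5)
Your setup is sound and goes considerably further than the paper's own proof, which simply cites \cite[Proposition 1]{Fogarty1980} for $F$-finiteness, \cite[Theorem 1]{Tyc1988} for the $p$-basis statement, and Kunz's theorem (or Tyc's formal smoothness criterion) for regularity. The identification $\Omega_{R/\FF_p}\simeq\Omega_{R/R^p}$, the dual derivations $\partial_i$ with $[\partial_i,\partial_j]=0$ and $\partial_i^p=0$, and the iterated slice decomposition exhibiting $R$ as a free module of rank $p^n$ over $R_n=\bigcap_i\ker\partial_i$ with monomial basis $\{r^I\}$ are all correct, and they do establish the $p$-independence half of the $p$-basis property (as well as $F$-finiteness, \emph{conditional} on the next step).

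However, the generation half --- $R_n=R^p$, equivalently $R=R^p[r_1,\dots,r_n]$ --- is precisely the substance of Tyc's theorem, and your proposal does not prove it: you explicitly label it ``the main technical obstacle'' and offer only a plan. That plan is circular as stated: the Kimura--Niitsuma structure theorem applies to Noetherian \emph{regular} $F$-finite local rings, but regularity and $F$-finiteness are among the conclusions of the proposition, not its hypotheses. Worse, when $R$ is non-reduced it need not be regular at all (e.g.\ $R=\FF_p[x]/(x^p)$ satisfies the hypothesis with $n=1$, $r_1=x$, yet is not regular), so Kimura--Niitsuma can never be invoked there, although the $p$-basis claim must still be proved in that case. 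The local-to-global descent (that $(R_n)_{\mathfrak{p}}=(R^p)_{\mathfrak{p}}$ for all primes implies $R_n=R^p$, with the primes of $R$, $R_n$ and $R^p$ correctly matched) is likewise only asserted. As it stands the argument is incomplete at its central step; you should either give a genuine proof that $\ker\bigl(d\colon R\to\Omega_R\bigr)=R^p$ under the freeness hypothesis, or cite \cite[Theorem 1]{Tyc1988} as the paper does.
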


\begin{proof}
	$F$-finiteness of $R$ follows from \cite[Proposition 1]{Fogarty1980}. The fact that  $\{r_1,...,r_n\}$ is a $p$-basis of $R$ follows from the more general \cite[Theorem 1]{Tyc1988}. The last part follows from Kunz's theorem (see, e.g., \cite[Theorem 1.1]{Ma-Polstra}).
\end{proof}

	Let $R$ be a Noetherian $F$-finite ring of prime characteristic $p>0$. By \cite[Theorem 10.9]{Ma-Polstra} $R$
	is a homomorphic image of a Noetherian $F$-finite regular ring of finite Krull dimension (note that there exist  Noetherian regular rings of infinite Krull dimension). In particular, $R$ has a finite Krull dimension.

	The following proposition is an analogue of existence of a system of local coordinates for smooth morphisms
	(see also \cite[Corollary 5.6]{Fink2025}).
	
	\begin{Proposition}\label{existence-of-local-p-coordinates}
		Let $X$ be a connected Noetherian regular $F$-finite $\FF_p$-scheme and let $x$ be a point of $X$. Let $n$ be the rank of the $\cO_X$-module $\Omega_X$ and let $k=\End (\unit _X)$.
		Then there exists an open neighbourhood $U$ of $x$ and a formally \'etale $k$-morphism $f:U\to \AA^n_{k}$.
		Moreover, we have equality
		$$n=\dim \cO_{X,x}+(\kappa (x): \kappa (x)^p)_p,$$
		where $(\kappa (x): \kappa (x)^p)_p=\log _p \dim _{\kappa (x)^p}(\kappa (x))$ is the $p$-degree of $\kappa (x)/\kappa (x)^p$.
	\end{Proposition}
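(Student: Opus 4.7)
The plan is to construct $f$ from a $p$-basis of $\cO_{X,x}$, verify formal étaleness via the cotangent complex, and derive the dimension formula from the standard conormal sequence for regular local rings.

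First I would invoke \cite[Corollary 3.2]{Kimura-Niitsuma1982} to produce a $p$-basis $r_1,\dots,r_n$ of the regular $F$-finite local ring $\cO_{X,x}$. Passing to an affine open neighbourhood $U=\Spec R$ of $x$, I may assume $r_i\in R$, and since $dr_1,\dots,dr_n$ span $\Omega_R$ at the stalk at $x$, a further shrinkage ensures that they form an $R$-basis. As $X$ is regular and connected, it is integral, so $R\subseteq K(X)$ is reduced; Proposition~\ref{Tyc} then yields that $\{r_1,\dots,r_n\}$ is a $p$-basis of $R$, and by \cite[Theorem 2]{Tyc1988} (quoted just before Proposition~\ref{Tyc}) $R$ is formally smooth over $\FF_p$. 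By Corollary~\ref{endomorphism-sheaf-of-trivial-divided-line-bundle} we have $k=\cO_X(X)^\perf\subseteq\cO_X(X)\to R$, giving $R$ a canonical $k$-algebra structure, and hence the desired $k$-morphism $f\colon U\to\AA^n_k$ sending $t_i\mapsto r_i$.

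To verify formal étaleness, set $A=k[t_1,\dots,t_n]$. Perfectness of $k$ yields $\Omega_{k/\FF_p}=0$, so $\Omega_{R/k}=\Omega_R$ and the map $\Omega_{A/k}\otimes_A R\to\Omega_{R/k}$, $dt_i\mapsto dr_i$, is an isomorphism. The first fundamental sequence then forces $\Omega_{R/A}=0$. For formal smoothness of $R/A$ I would pass to cotangent complexes: since $L_{k/\FF_p}=0$ (perfect field extension), the Jacobi--Zariski sequence for $\FF_p\to k\to R$ gives $L_{R/k}\simeq L_{R/\FF_p}$, which is concentrated in degree zero by formal smoothness of $R/\FF_p$; likewise $L_{A/k}$ is concentrated in degree zero since $A/k$ is smooth. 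The Jacobi--Zariski sequence for $k\to A\to R$ then yields
\[
0\to H_1(L_{R/A})\to\Omega_{A/k}\otimes_A R\to\Omega_{R/k}\to\Omega_{R/A}\to 0,
\]
forcing $H_1(L_{R/A})=0$; the same sequence in higher degrees gives $L_{R/A}=0$, so $f$ is formally étale.

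For the dimension formula, apply to $R_x=\cO_{X,x}$ the conormal sequence of $\kappa(x)$-modules
\[
0\to\mathfrak{m}_x/\mathfrak{m}_x^2\to\Omega_{R_x}\otimes\kappa(x)\to\Omega_{\kappa(x)}\to 0,
\]
whose exactness on the left follows from formal smoothness of $R_x/\FF_p$; concretely, by Cohen's structure theorem $\hat R_x\cong\kappa(x)[[x_1,\dots,x_d]]$, and a regular system of parameters extends a lifted $p$-basis of $\kappa(x)$ to a $p$-basis of $\hat R_x$. Since $\kappa(x)$ is $F$-finite, $\dim_{\kappa(x)}\Omega_{\kappa(x)}=(\kappa(x):\kappa(x)^p)_p$, and together with $\dim_{\kappa(x)}\mathfrak{m}_x/\mathfrak{m}_x^2=\dim\cO_{X,x}$ this delivers $n=\dim\cO_{X,x}+(\kappa(x):\kappa(x)^p)_p$. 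The principal obstacle is the formal étaleness verification; I expect the cotangent complex route above to be the most transparent, though a direct lifting argument exploiting the freeness of $R$ over $R^p$ furnished by the $p$-basis is a viable alternative.
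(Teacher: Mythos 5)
Your argument is correct and lands on the same skeleton as the paper's proof (a $p$-basis $r_1,\dots,r_n$, Proposition~\ref{Tyc}, and a cotangent-complex criterion for formal étaleness), but it decouples and reroutes the two halves. The paper does not quote \cite[Corollary 3.2]{Kimura-Niitsuma1982} inside the proof; instead it \emph{constructs} the $r_i$ by taking a minimal generating set of $\mathfrak{m}_x$ together with lifts of a $p$-basis of $\kappa(x)$, using Berger--Kunz for left exactness of the conormal sequence --- this single construction produces the local coordinates and the count $n=\dim\cO_{X,x}+(\kappa(x):\kappa(x)^p)_p$ simultaneously (and, as the paper remarks, reproves the Kimura--Niitsuma result). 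You instead import the $p$-basis as a black box and then run the conormal sequence a second time to extract the dimension formula; this is perfectly valid but slightly redundant. For formal étaleness the paper cites \cite[Suppl\'ement, Th\'eor\`eme 30]{Andre1974} to get $L_{R/\FF_p}$ concentrated in degree zero and then invokes \cite[Corollary 5.5]{Fink2025}, whereas you unwind the Jacobi--Zariski triangles yourself; your version is more self-contained and even shows that the full strength of Andr\'e's theorem is not needed --- formal étaleness only requires $\Omega_{R/A}=0$ and $H_1(L_{R/A})=0$, and the latter follows from $H_1(L_{R/\FF_p})=0$, which is the elementary direction of the formal-smoothness criterion. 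Two small imprecisions to fix: (i) your assertion that $L_{R/\FF_p}$ is concentrated in \emph{all} degrees $\ge 1$ ``by formal smoothness'' is exactly the nontrivial content of Andr\'e's theorem, so either cite it as the paper does or weaken the claim to $H_1=0$ as above; (ii) left exactness of the conormal sequence is governed by formal smoothness (i.e.\ separability) of the \emph{quotient} $\kappa(x)/\FF_p$, not of $R_x/\FF_p$ --- your Cohen-structure-theorem fallback and the paper's Berger--Kunz citation both repair this.
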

	
	\begin{proof}   
		We have a standard short exact sequence
		$$m_x/m_x^2\to \Omega_{X,x}\otimes \kappa (x)\to \Omega_{\kappa (x)}\to 0$$
		of $\kappa (x)$-modules. By \cite[Satz 1]{Berger-Kunz1961} (see also \cite[Theorem 6.7]{Kunz-book} for a modern formulation) this sequence is also left exact.	
		Let us choose $r_1,...,r_s\in m_x$ so that its classes form a $\kappa (x)$-basis of  $m_x/m_x^2$ (equivalently, $r_1,...,r_s$ form a minimal set of generators of $m_x$). Let us also choose elements $r_{s+1},..., r_n\in \cO_{X,x}$ such that $d\bar{r}_{s+1},..., d\bar{r}_n$ form a $\kappa (x)$-basis of $\Omega_{\kappa (x)}$. Here $\bar{r_i}$ denotes the class of $r_i$ in $\kappa (x)$. Then $dr_1,...,dr_n$ form a $\kappa (x)$-basis of $\Omega_{X,x}\otimes \kappa (x)$. Since $\Omega_{X,x}$ is a free $\cO_{X,x}$-module, by Nakayama's lemma $dr_1,...,dr_n$
		are its free generators. Then $r_1,...,r_n$ extend to sections of $\cO_{X}$ such that $dr_1,...,dr_n$ generate $\Omega_X$ in some open neighbourhood $U\subset X$. We claim that these sections define the required morphism. 
		To check this we can assume that $U=\Spec R$ is affine. Then $k= R^{\sf
			perf}\subset R$ and the homomorphism $\varphi: A=k[x_1,...,x_n]\to R$ mapping
		$x_i$ to $r_i$ is $k$-linear.  Note that by  Proposition \ref{Tyc} the elements
		$\{r_1,...,r_n\}$ form a $p$-basis of $R$.  Since $R$ is regular,
        \cite[Suppl\'ement, Th\'eor\`eme  30]{Andre1974}
        implies that the cotangent complex $L_{R/\FF_p}$
		is concentrated in degree zero.  So  \cite[Corollary 5.5]{Fink2025}  (or, more precisely, its proof) implies that $f$ is formally \'etale.
	\end{proof}
	
	
	\begin{Remark} The above proof shows that for any $x\in X$ a $p$-basis of  $\cO_{X,x}$ can be constructed by taking a minimal set of generators of the maximal ideal $m_x$ and adding to it lifts of a $p$-basis of $\kappa(x)$. A weaker form of this statement is proven as \cite[Theorem 3.1]{Kimura-Niitsuma1980}, where the authors choose a special minimal set of generators of $m_x$. \cite[Theorem 1]{Tyc1988} shows that these two facts are equivalent.
	\end{Remark}
	
	\begin{Example}
		Here we show that the map $\hat{\cO}_{X,x}\to \hat {\cO}_{\AA^n_K, f(x)}$, induced by $f$ on the completions of local rings, need not be an isomorphism.
		Let $k=\FF_p ((x))$ be the field of formal Laurent series. This field is $F$-finite and $\{x\}$ is its $p$-basis. Then
		for $X=\Spec k$ the above morphism $X\to \AA ^1_{\FF_p}$ corresponds to the inclusion $\FF_p[x]\hookrightarrow \FF_p ((x))$. So the only point of $X$ is mapped to the generic point of $\AA ^1_{\FF_p}$ and the corresponding map on local rings is given by the inclusion  $\FF_p(x)\hookrightarrow \FF_p ((x))$, so it is not an isomorphism on the completions of local rings.
	\end{Example}


Let $\cD_X$  denote the ring of differential operators on $X$ and let $\cD_X^{(s)}$ be the centralizer of $\cO_X ^{p^s}$ in $\cD_X$. The following lemma can be proven in the same way as  \cite[Lemma 3.3]{Chase1974}.

\begin{Proposition}
	If $X$ is a Noetherian regular $F$-finite $\FF_p$-scheme then the action of $\cD_X$ on $\cO_X$ induces an isomorphism 
	$\cD_X^{(s)}\to \cEnd _{\cO_X^{p^s}}(\cO_X)$ and $\cD_X=\bigcup _{s\ge 0} \cD_X^{(s)}$.
\end{Proposition}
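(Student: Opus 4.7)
The assertion is local on $X$, so by Proposition~\ref{existence-of-local-p-coordinates} I may assume $X=\Spec R$ admits a $p$-basis $r_1,\ldots,r_n$ over $R^p$. Then $R$ is a free $R^{p^s}$-module with basis $\{r^{\underline{i}}:=r_1^{i_1}\cdots r_n^{i_n}\}_{0\le i_k<p^s}$, so $\End_{R^{p^s}}(R)$ is free over $R^{p^s}$ of rank $p^{2ns}$. The inclusion $\cD_R^{(s)}\subseteq\End_{R^{p^s}}(R)$ is immediate from the definition of the centralizer (a differential operator commuting with multiplication by every $f\in R^{p^s}$ is $R^{p^s}$-linear), and injectivity of the map in the statement holds because differential operators act faithfully on $R$.

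For the reverse inclusion I would construct, for every multi-index $\underline{j}=(j_1,\ldots,j_n)$ with $0\le j_k<p^s$, the divided-power operator $\partial^{[\underline{j}]}\colon R\to R$ acting on the basis by
\[
\partial^{[\underline{j}]}(r^{\underline{i}})=\prod_{k=1}^n\binom{i_k}{j_k}\,r^{\underline{i}-\underline{j}},
\]
with terms involving negative exponents set to zero, and extended $R^{p^s}$-linearly. Existence and the differential-operator property come from pulling back the standard Hasse--Schmidt divided powers along the formally \'etale morphism $\Spec R\to\AA^n_k$ provided by Proposition~\ref{existence-of-local-p-coordinates}. Each $\partial^{[\underline{j}]}$ lies in $\cD_R^{(s)}$ because $\binom{p^s}{j}\equiv 0\pmod p$ for $0<j<p^s$, so it annihilates $p^s$-th powers and hence centralizes $R^{p^s}$. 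An elementary matrix computation shows that $R$-linear combinations of compositions $r^{\underline{a}}\circ\partial^{[\underline{j}]}$ realize every matrix unit of $\End_{R^{p^s}}(R)$ with respect to the basis $\{r^{\underline{i}}\}$, which yields the desired equality $\cD_R^{(s)}=\End_{R^{p^s}}(R)$.

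For the equality $\cD_X=\bigcup_s\cD_X^{(s)}$, take $\varphi\in\cD_R$ of order $\le k$, so that $\ad_f^{k+1}(\varphi)=0$ for every $f\in R$. A standard iterated-commutator computation in an associative algebra gives
\[
[\varphi,f^{p^s}]=\sum_{j=1}^{k}(-1)^j\binom{p^s}{j}\,f^{p^s-j}\,\ad_f^{j}(\varphi),
\]
the sum terminating at $k$ since $\ad_f^{j}(\varphi)=0$ for $j>k$. As soon as $p^s>k$, every binomial $\binom{p^s}{j}$ with $1\le j\le k$ vanishes in $\FF_p$, so $[\varphi,f^{p^s}]=0$ for all $f\in R$. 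Since $R^{p^s}$ is generated as a ring by $p^s$-th powers, $\varphi$ centralizes $R^{p^s}$ and hence lies in $\cD_R^{(s)}$.

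The main obstacle is the construction of the divided-power operators $\partial^{[\underline{j}]}$: our $p$-basis does not arise from an \'etale coordinate system but only from a formally \'etale one, so one cannot invoke \'etale base change naively. This is handled by the fact that rings of differential operators and their divided-power subalgebras transfer along formally \'etale maps; alternatively, the key relations $\partial^{[\underline{j}]}(r^{\underline{i}})=\binom{\underline{i}}{\underline{j}}r^{\underline{i}-\underline{j}}$ together with $R^{p^s}$-linearity can be verified by a direct computation with the $p$-basis once the operators are defined via the formal identification of the completion of $R$ along the diagonal with a divided-power polynomial ring.
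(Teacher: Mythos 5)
Your proposal is correct and follows essentially the same route as the paper, which gives no details and simply notes that the statement "can be proven in the same way as" Chase's Lemma 3.3 — i.e., exactly your argument: reduce to an affine piece with a $p$-basis, identify $\End_{R^{p^s}}(R)$ via the monomial basis and divided-power operators, and deduce $\cD_R=\bigcup_s\cD_R^{(s)}$ from the vanishing of $\binom{p^s}{j}$ in $\FF_p$ for $0<j<p^s$ together with the iterated-commutator identity. The only step you leave slightly informal (that the $\partial^{[\underline{j}]}$ are genuine differential operators) can also be settled without the formally étale comparison by observing that the augmentation ideal of $R\otimes_{R^{p^s}}R$ is nilpotent, so \emph{every} $R^{p^s}$-linear endomorphism of $R$ is automatically a differential operator of bounded order.
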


Note that if $R$ is a Noetherian regular $F$-finite $\FF_p$-algebra then equality $D_R=\bigcup _{s\ge 0}\End
_{R^{p^s}}R$, which implies that $R^{\sf perf}$ is the center of $D_R$.


Let   $\cD _X{\sf \text{-} Coh}$ be the category of left $\cD_X$-modules, which are coherent as $\cO_X$-modules. Using the above proposition one can explicitly write down Morita's equivalence of $\cD_X^{(s)}$ and $\cO_X^{p^s}$ as in \cite[Proposition 2.1]{Alvarez-Montaner-Blickle-Lyubeznik2005}. 
This can be used to generalize Katz's theorem \cite[Theorem 1.3]{Gieseker1975} to the following result:

\begin{Theorem}
	Let $X$ be a Noetherian regular $F$-finite $\FF_p$-scheme. Then there is an equivalence of categories between ${\sf Vect} ^{\sf perf}(X )$ and $\cD _X{\sf \text{-} Coh}$.
\end{Theorem}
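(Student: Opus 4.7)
The plan is to adapt Katz's classical argument for smooth varieties, using the filtration $\cD_X = \bigcup_{s \geq 0} \cD_X^{(s)}$ from the preceding proposition to avoid any need for local coordinates. The strategy is to identify each $\cD_X^{(s)}\text{-Coh}$ with $\Vect(X)$ via Morita theory at each level, and then to pass to the inverse limit.

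First, I would exploit the ascending union $\cD_X = \bigcup_s \cD_X^{(s)}$ with $\cD_X^{(s)} \subseteq \cD_X^{(s+1)}$: a $\cD_X$-module structure on a coherent $\cO_X$-module is the same as a compatible family of $\cD_X^{(s)}$-module structures, which yields a canonical equivalence
$$\cD_X\text{-Coh} \;\simeq\; \varprojlim_s \bigl(\cD_X^{(s)}\text{-Coh}\bigr),$$
with the transition arrows given by restriction of scalars along $\cD_X^{(s)} \subseteq \cD_X^{(s+1)}$.

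Next, for each fixed $s$, I would apply classical Morita theory to the presentation $\cD_X^{(s)} \cong \cEnd_{\cO_X^{p^s}}(\cO_X)$, as in \cite[Proposition 2.1]{Alvarez-Montaner-Blickle-Lyubeznik2005}. Regularity together with $F$-finiteness is crucial here: Kunz's theorem ensures that $F^s\colon X \to X$ is finite and faithfully flat, so $\cO_X$ is a finite locally free, faithful, hence progenerator, $\cO_X^{p^s}$-module. The resulting Morita equivalence takes $\cD_X^{(s)}\text{-Coh}$ to the category of coherent $\cO_X^{p^s}$-modules. Composing with the Frobenius isomorphism of sheaves of rings $F^s\colon \cO_X \xrightarrow{\sim} \cO_X^{p^s}$ (valid because $X$ is reduced), and invoking regularity to identify coherent $\cO_X$-modules with vector bundles, this gives $\cD_X^{(s)}\text{-Coh} \simeq \Vect(X)$ for every $s$.

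The hard part will be to verify that these level-wise equivalences are compatible with the restriction transitions, so that the inverse limits match. Unwinding the Morita identifications, if $M$ is a $\cD_X^{(s+1)}$-module corresponding to the $\cO_X^{p^{s+1}}$-module $N_{s+1}$, then the same $M$ viewed as a $\cD_X^{(s)}$-module corresponds to $N_s = \cO_X^{p^s} \otimes_{\cO_X^{p^{s+1}}} N_{s+1}$. Under the Frobenius identifications $\cO_X \xrightarrow{\sim} \cO_X^{p^s}$ and $\cO_X \xrightarrow{\sim} \cO_X^{p^{s+1}}$, the ring inclusion $\cO_X^{p^{s+1}} \hookrightarrow \cO_X^{p^s}$ corresponds to $F_X\colon \cO_X \to \cO_X$, so extension of scalars along it becomes precisely the Frobenius pullback $F_X^*\colon \Vect(X) \to \Vect(X)$. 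Assembling the level-wise equivalences with these compatible transitions then yields
$$\cD_X\text{-Coh} \;\simeq\; \varprojlim_s \Vect(X) \;=\; \Vect^\perf(X),$$
completing the argument.
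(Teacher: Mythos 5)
Your strategy coincides with the one the paper itself indicates (the paper only sketches this proof): filter $\cD_X$ by the centralizers $\cD_X^{(s)}\cong\cEnd_{\cO_X^{p^s}}(\cO_X)$, apply Morita equivalence level by level as in Alvarez-Montaner--Blickle--Lyubeznik, and observe that under the Frobenius identifications the transition functors become the Frobenius pullback, so the inverse limit is the category of $F$-divided sheaves. One intermediate step is false as written, though: on a regular scheme coherent $\cO_X$-modules are \emph{not} all locally free (regularity gives finite projective dimension, not projectivity --- a skyscraper sheaf is the standard counterexample), and indeed a skyscraper $\cO_X^{p^s}$-module tensors up to a coherent, non-locally-free $\cD_X^{(s)}$-module, so the level-wise equivalence is $\cD_X^{(s)}\text{-Coh}\simeq\Coh(X)$, not $\Vect(X)$. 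This does not sink the argument: passing to the inverse limit yields $\cD_X\text{-Coh}\simeq{\sf Coh}^{\sf perf}(X)$, and local freeness is only recovered \emph{in the limit} via Proposition \ref{local-freeness-F-div-sheaves}, which identifies ${\sf Coh}^{\sf perf}(X)$ with ${\sf Vect}^{\sf perf}(X)$ for $F$-finite $X$. With that correction your proof is complete and is essentially the paper's intended argument.
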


\begin{Remark} \label{Kindler-works-nonetheless}
    Using the above theorem and Proposition \ref{existence-of-local-p-coordinates} one can use $D$-modules to
give another proof of Lemma \ref{full-faithfullness-of-restriction-to-open-regular} in the spirit of proof of 
 \cite[Lemma 2.5]{Kindler2015}.	
\end{Remark}


In the following we will not use the above theorem but instead we write down a general version of Cartier's descent. This version essentially follows from \cite[Proposition 2.1]{Alvarez-Montaner-Blickle-Lyubeznik2005} (see \cite[footnote on p.~462]{Alvarez-Montaner-Blickle-Lyubeznik2005}, which however seems to require some additional work) but we follow the standard proof contained in \cite[Theorem 5.1]{Katz1970}.

\begin{Theorem}\label{Cartier's-descent} 
	Let $X$ be a Noetherian regular $F$-finite $\FF_p$-scheme. Then the functor $F_X^*: \Coh (X)\to \Mic^{0}(X)$ given by sending $E$ to $F_X^*E$ with the canonical connection  is an equivalence of categories between the category of coherent $\cO_X$-modules and the category of coherent $\cO_X$-modules with an integrable connection and zero $p$-curvature. Analogous fact holds also for quasi-coherent $\cO_X$-modules.
\end{Theorem}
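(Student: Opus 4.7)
The plan is to adapt the classical proof of Katz \cite[Theorem 5.1]{Katz1970} to our $F$-finite regular setting, making essential use of the local $p$-bases supplied by Proposition \ref{existence-of-local-p-coordinates}. Since being an equivalence of categories can be checked Zariski-locally, I would reduce to the affine situation $X = \Spec R$ where, by Proposition \ref{existence-of-local-p-coordinates}, one may further assume $R$ admits a $p$-basis $r_1,\ldots,r_n$, so that $dr_1,\ldots,dr_n$ forms a free $R$-basis of $\Omega_R$. Let $\partial_1,\ldots,\partial_n$ be the $R^p$-linear derivations of $R$ dual to $dr_1,\ldots,dr_n$, characterized by $\partial_i(r_j) = \delta_{ij}$. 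By Kunz's theorem $F_R$ is flat, and since it is also finite and surjective, it is faithfully flat.

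Next I would construct the candidate quasi-inverse $G\colon \Mic^{0}(X)\to \Coh(X)$ by $(V,\nabla)\mapsto V^\nabla := \ker\nabla$, viewed as an $R$-module through the Frobenius identification $R \cong R^p$. The unit $\eta_E\colon E\to (F_X^*E)^\nabla$, $e\mapsto 1\otimes e$, is well-defined since the canonical connection is characterized by making Frobenius pullbacks of local sections horizontal; the counit $\varepsilon_V\colon F_X^*(V^\nabla)\to V$ is the multiplication map $a\otimes v\mapsto a\cdot v$. The task is to show that both $\eta_E$ and $\varepsilon_V$ are isomorphisms.

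The main step, and the principal obstacle, is verifying that $\varepsilon_V$ is an isomorphism, i.e.~that the multiplication $R\otimes_{R^p} V^\nabla \to V$ is an isomorphism for every $(V,\nabla)\in \Mic^{0}(R)$. Here the $p$-basis enters through an explicit splitting. The zero $p$-curvature condition yields $\nabla(\partial_i)^p = \nabla(\partial_i^p) = 0$ on $V$, because $\partial_i^p$ vanishes on the $p$-basis and hence on all of $R$. Following Katz, I would construct an $R^p$-linear projector $\pi\colon V\to V^\nabla$ built from the operators $\nabla(\partial_i)$ and the basis elements $r_i$ (of the Taylor-formula shape $\prod_i \sum_{j=0}^{p-1} \frac{(-r_i)^j}{j!}\nabla(\partial_i)^j$, suitably ordered and interpreted), and verify that its $R$-linear extension $R\otimes_{R^p} V^\nabla \to V$ is a two-sided inverse to $\varepsilon_V$. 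The key algebraic input is that $R$ is a free module of rank $p^n$ over $R^p$ with basis the monomials $r_1^{i_1}\cdots r_n^{i_n}$ for $0\le i_j < p$, which is precisely what lets one carry out Katz's explicit Morita-type calculation. Once $\varepsilon_V$ is invertible, the faithful flatness of $F_R$ forces $\eta_E$ to be an isomorphism as well.

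Finally, the quasi-coherent version follows by writing each quasi-coherent sheaf as a filtered colimit of its finitely generated submodules, since Frobenius pullback, the kernel of $\nabla$, and the tensor product $R\otimes_{R^p}(-)$ all commute with filtered colimits, so the equivalence extends from the coherent to the quasi-coherent setting.
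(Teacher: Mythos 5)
Your proposal is correct and follows essentially the same route as the paper: reduce to the (affine/local) case where a $p$-basis $r_1,\dots,r_n$ exists, take horizontal sections as the quasi-inverse, and run Katz's explicit projector argument from \cite[Theorem 5.1]{Katz1970} to show the counit $F_X^*(V^\nabla)\to V$ is an isomorphism. The paper is simply terser, delegating the projector computation entirely to Katz, whereas you spell it out (and add the faithful-flatness shortcut for the unit), which is a harmless elaboration of the same proof.
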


\begin{proof}
	The quasi-inverse $ \Mic^{0}(X)\to \Coh (X)$ to $F_X^*$ is given by  sending $(E, \nabla)$ to the sheaf of horizontal sections $E^{\nabla}$ treated as an $\cO_X$-module by inducing an $\cO_X^p$-module from $E$. 
	We need to show that for any object  $(E,\nabla) $ of $\Mic^{0}(X)$ the canonical map of $\cO_X$-modules
	$F^*_X(E^{\nabla})\to E$ 
	is an isomorphism. Since the question is local we can reduce the problem to proving an analogous isomorphism for modules over a Noetherian regular local $F$-finite $\FF_p$-ring $R$. We can choose a  $p$-basis $\{r_1,...,r_n\}$ of $R$ over $R^p$ so that $\Omega_R$ is a free $R$-module with basis $\{ dr_1,...,dr_n\}$.
	Then the proof continues as the proof of \cite[Theorem 5.1]{Katz1970}. 
\end{proof}

\section{ Local behavior of the $F$-divided fundamental gerbe revisited}

In this section, we revisit the local behavior of the $F$-divided fundamental
gerbe discussed in \S \ref{Local behavior
regular}. We will see that the ``extension of subobjects'' property holds in considerable
generality, while ``full faithfulness'' requires certain
normality conditions. It is well-known that for a normal variety  $X$, there exists a natural surjection $\Gal(K(X))\to\pi_1^\et(X)$ (see \cite[Expos\'e V, Proposition 8.2]{SGA1}). The above two properties yield an analogous result for the $F$-divided fundamental gerbe. 

 \medskip
	
The following lemma generalizes Lemma \ref{extending objects in the regular case} to arbitrary schemes.
	
	\begin{Lemma}\label{extending objects in general}
		Let $X$ be a connected Noetherian $F$-finite $\FF_p$-scheme and
		let $\imath : U\hookrightarrow X$ be a dense open subset. Then for any
		$\EE\in\Vect^\perf(X)$ and a subobject $\GG_U\subseteq \imath^*\EE$ there exists a unique subobject $\GG\subseteq \EE$ such that	$\imath^*\GG=\GG_U$ as subobjects of $\imath^*\EE$. 
	\end{Lemma}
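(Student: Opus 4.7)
The strategy is to reduce to the regular case already handled in Lemma~\ref{extending objects in the regular case} by means of smooth $h$-descent, using Gabber's alteration theorem and Bhatt's descent theorem (Theorem~\ref{h-descent-for-F-divided}). Uniqueness will come first and for free via a rank--counting argument, and then feed into the descent step.

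\emph{Uniqueness.} By Theorem~\ref{F-divided-sheaves-form-Tannakian-category}, $\Vect^\perf(X)$ is abelian. Given two extensions $\GG_1,\GG_2\subset\EE$, form the quotient $\FF:=(\GG_1+\GG_2)/\GG_1\in\Vect^\perf(X)$. Its restriction to $U$ is zero, and each of its components is a vector bundle on $X$; since the rank of a vector bundle is locally constant and vanishes on the dense open $U$, it vanishes everywhere. Thus $\FF=0$, hence $\GG_2\subset\GG_1$, and by symmetry $\GG_1=\GG_2$.

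\emph{Existence.} By Lemma~\ref{lem:finiteness} applied to the thickening $X_{\mathrm{red}}\hookrightarrow X$, I may assume $X$ is reduced. Then Gabber's smooth alteration theorem furnishes a smooth proper surjection $f\colon Y\to X$ with $Y$ a regular $F$-finite $\FF_p$-scheme. Set $V:=f^{-1}(U)$; since $f$ is dominant, $V$ is dense in $Y$. Writing $\EE_Y:=f^*\EE$ and $\GG_V:=(f|_V)^*\GG_U\subset\imath_V^*\EE_Y$, Lemma~\ref{extending objects in the regular case} provides a unique subobject $\GG_Y\subset\EE_Y$ in $\Vect^\perf(Y)$ extending $\GG_V$. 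To descend $\GG_Y$ back to $X$ via Theorem~\ref{h-descent-for-F-divided}, I need $p_1^*\GG_Y=p_2^*\GG_Y$ as subobjects of $(fp_i)^*\EE$ in $\Vect^\perf(Y\times_X Y)$. Since $f$ is smooth, $p_1,p_2$ are flat, so $V\times_U V$ is open dense in $Y\times_X Y$. On that dense open, both $p_i^*\GG_Y$ agree with the pullback of $\GG_U$ via the descent isomorphism of $\EE$. Applying the uniqueness argument above on $Y\times_X Y$ (which is Noetherian and $F$-finite, hence falls under the same rank argument) gives $p_1^*\GG_Y=p_2^*\GG_Y$. The cocycle on $Y\times_X Y\times_X Y$ is then automatic, as the cocycle isomorphism for $\EE$ preserves subobjects whenever it does so on a dense open. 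Theorem~\ref{h-descent-for-F-divided} then produces the desired $\GG\subset\EE$ in $\Vect^\perf(X)$, and the equality $\imath^*\GG=\GG_U$ follows because the two sides have the same pullback $\GG_V$ along the $h$-cover $f|_V\colon V\to U$ and $\Vect^\perf(-)$ is an $h$-stack.

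\emph{Main obstacle.} The crux is ensuring that $V\times_U V$ is dense in $Y\times_X Y$, which is what allows uniqueness on the double fibered product to collapse the two pulled-back subobjects. This is precisely where smoothness, not just regularity, of the alteration is needed: a regular but non-flat alteration could drop density after fibered products. The direct generalisation of the proof of Lemma~\ref{extending objects in the regular case} via $\imath_*$ is unavailable here, because without flatness of the Frobenius $\imath_*$ no longer commutes with $F_X^*$ and $\Vect^\perf(X)$ is not cut out of an abelian ambient $\QCoh^\perf(X)$ in a naive way; smooth $h$-descent is the mechanism that bypasses this.
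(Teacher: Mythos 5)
Your overall architecture (reduce to the regular case via Gabber's theorem, then descend via Theorem~\ref{h-descent-for-F-divided}, with uniqueness coming from faithfulness of restriction) is the same as the paper's, and your uniqueness argument is fine. The existence step, however, has a genuine gap: Gabber's theorem does \emph{not} produce a \emph{smooth} proper surjection $f\colon Y\to X$ with $Y$ regular. It produces an alteration, i.e.\ a proper, surjective, generically finite map from a regular scheme, which is not flat over the singular locus. In fact no smooth surjection from a regular scheme onto a singular $X$ can exist (a flat local homomorphism with regular source and regular fibres has regular target), so your hypothesis would force $X$ to be regular and the lemma would reduce to Corollary~\ref{Kindler}. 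You correctly identify in your closing paragraph that your argument hinges on the density of $V\times_U V$ in $Y\times_X Y$ and that a non-flat alteration ``could drop density after fibered products'' --- this is exactly what happens (for a resolution of a cone with exceptional divisor $E$ over the vertex, $E\times E$ is an irreducible component of $Y\times_X Y$ lying entirely outside the preimage of $U$). Consequently the step ``$p_1^*\GG_Y=p_2^*\GG_Y$ by uniqueness on a dense open'' does not go through, and neither does your cocycle verification, which rests on the same density claim.

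The paper circumvents this without any density statement on the fibre products. It shows that the composite ${\sf pr}_1^*\tilde\GG\hookrightarrow{\sf pr}_1^*f^*\EE\xrightarrow{\ \alpha\ }{\sf pr}_2^*f^*\EE\to{\sf pr}_2^*(f^*\EE/\tilde\GG)$ vanishes because it is a morphism of $F$-divided sheaves vanishing on the nonempty open preimage of $U$, and the image of such a morphism is again an $F$-divided \emph{bundle}, whose rank is locally constant; so vanishing propagates by connectedness rather than by density (this is the mechanism behind Theorem~\ref{F-divided-sheaves-form-Tannakian-category}~(B)). The resulting map ${\sf pr}_1^*\tilde\GG\to{\sf pr}_2^*\tilde\GG$ is then a restriction of the canonical descent isomorphism $\alpha$ of $f^*\EE$, so the cocycle condition is inherited from that of $\alpha$ for free. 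If you replace your density argument by this rank/connectedness mechanism, the rest of your proof can be salvaged.
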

	
	\begin{proof} 
		Let us recall that $X$ is excellent. By Gabber's theorem (see
        \cite[1.1]{Gabber2005} and \cite[II, Th\'eor\`eme
        4.3.1]{Illusie-Laszlo-Orgogozo2014}), which generalizes de Jong’s
        result \cite[Theorem 4.1]{deJong1996} on the existence of alterations
        from smooth varieties, there exists a covering $f: \tilde X\to X$ of
        $X$ in $h$-topology such that $\tilde X$ is a finite disjoint union of
        integral regular schemes (which are automatically also Noetherian and
        $F$-finite).
        By  Corollary \ref{Kindler} and Lemma \ref{relative gerbe =
        surjective}, we can extend $(f|_{f^{-1}(U)})^*\GG_U\subset
        (f^*\EE)|_{f^{-1}(U)}$ to an $F$-divided subbundle $\tilde\GG\subset f^*\EE$. Now we can use Theorem \ref{h-descent-for-F-divided} to show that
		$\tilde\GG$ descends to  an $F$-divided subbundle of $\EE$. More precisely, $ f^*\EE$ comes with the canonical descent datum for $f$ given by the canonical isomorphism $\alpha: {\sf pr}_1^*f^*\EE\stackrel{\simeq}{\longrightarrow} {\sf pr}_2^*f^*\EE$ on $\tilde X\times_ X\tilde X$. 
		Note that by construction the composition 
		$${\sf pr}_1^*\tilde\GG\subset {\sf pr}_1^* f^*\EE\mathop{\longrightarrow}^{\alpha} {\sf pr}_2^*f^*\EE\to  {\sf pr}_2^*
		(f^*\EE/\tilde\GG)$$
		vanishes on $f^{-1}(U)\times _U f^{-1}(U)$. So it is also vanishes on $\tilde X\times_ X\tilde X$ and hence it defines a map ${\sf pr}_1^*\tilde\GG\mathop{\to} {\sf pr}_2^*\tilde\GG$. This map is an isomorphism, as it is an isomorphism after restricting to $f^{-1}(U)\times _U f^{-1}(U)$. So it defines a descent datum for the inclusion $\tilde\GG\subset f^*\EE$ which by   	Theorem \ref{h-descent-for-F-divided} gives the required $F$-divided subbundle of $\EE$.
		Uniqueness of $\GG$ follows from Theorem
        \ref{F-divided-sheaves-form-Tannakian-category}, (B).
	\end{proof}

    {
        The above Lemma easily generalizes to algebraic stacks:
\begin{Lemma}\label{extending objects in general stack version}
		Let $X$ be a connected Noetherian weakly $F$-finite $\FF_p$-algebraic
        stack and
		let $\imath : U\hookrightarrow X$ be a dense open { substack.}
        Then for any
		$\EE\in\Vect^\perf(X)$ and a subobject $\GG_U\subseteq \imath^*\EE$ there exists a unique subobject $\GG\subseteq \EE$ such that	$\imath^*\GG=\GG_U$ as subobjects of $\imath^*\EE$. 
	\end{Lemma}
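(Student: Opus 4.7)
The plan is to reduce to the scheme case via smooth descent. Since $X$ is weakly $F$-finite, I would choose a smooth atlas $u\colon Y\twoheadrightarrow X$ with $Y$ a Noetherian $F$-finite scheme. Because $u$ is smooth and $X$ is locally Noetherian, the fiber products $Y\times_X Y$ and $Y\times_X Y\times_X Y$ are also Noetherian, and since smooth morphisms preserve $F$-finiteness over an $F$-finite base, they are $F$-finite as well. Moreover, as $u$ is flat and surjective, $u^{-1}(U)$ is dense in $Y$, and likewise $(u\times u)^{-1}(U\times_X U)$ is dense in $Y\times_X Y$.

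First, I would pull back to obtain $\tilde\EE := u^*\EE$ in $\Vect^\perf(Y)$, equipped with its canonical descent datum $\alpha\colon p_1^*\tilde\EE \xrightarrow{\sim} p_2^*\tilde\EE$ on $Y\times_X Y$ satisfying the cocycle condition on $Y\times_X Y\times_X Y$. Pulling $\GG_U$ back along $u|_{u^{-1}(U)}$ gives a subobject of $\tilde\EE|_{u^{-1}(U)}$. Applying the already-proved scheme version of the lemma to each connected component of $Y$ produces a subobject $\tilde\GG\subset \tilde\EE$ in $\Vect^\perf(Y)$ whose restriction to $u^{-1}(U)$ equals $u^*\GG_U$.

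Next, I would show that $\tilde\GG$ inherits a descent datum. On $Y\times_X Y$, consider the two subobjects $p_1^*\tilde\GG$ and $\alpha^{-1}(p_2^*\tilde\GG)$ of $p_1^*\tilde\EE$; by construction both agree with the pullback of $\GG_U$ over the dense open $(u\times u)^{-1}(U\times_X U)$. Applying the uniqueness part of the scheme version to each connected component of $Y\times_X Y$ forces them to coincide globally, so $\alpha$ restricts to an isomorphism $\tilde\alpha\colon p_1^*\tilde\GG \xrightarrow{\sim} p_2^*\tilde\GG$. The cocycle condition for $\tilde\alpha$ on $Y\times_X Y\times_X Y$ holds over the preimage of $U$, and hence holds globally by the same uniqueness argument applied there. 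Since $\Vect^\perf$ is a stack in the smooth topology (being an fppf stack, as noted in the paragraph preceding Theorem \ref{h-descent-for-F-divided}), this descent datum descends to the desired subobject $\GG\subset\EE$ in $\Vect^\perf(X)$. Uniqueness of $\GG$ then follows from the faithfulness statement in Theorem \ref{F-divided-sheaves-form-Tannakian-category}(B), applied to the section $u$.

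The main technical point I anticipate is ensuring that the scheme version of the lemma is applicable on each irreducible piece of $Y\times_X Y$ and $Y\times_X Y\times_X Y$ -- namely that these are Noetherian $F$-finite and the relevant open sets are dense -- so that its uniqueness assertion can be invoked repeatedly to glue the local extensions into a global descent datum. Once these formal checks are in place, the argument is a routine smooth-descent combination of the existence and uniqueness contained in the scheme case.
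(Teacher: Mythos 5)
Your proposal is correct and follows essentially the same route as the paper: pull back along a smooth atlas by a Noetherian $F$-finite scheme, extend the subobject there via the already-established scheme case, use faithfulness of restriction to a dense open to upgrade the canonical descent datum on $\EE$ to one on the extension, and descend by fppf descent of $\Vect^\perf$. The paper's proof is just a terser version of this same argument, so no further comment is needed.
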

    \begin{proof} Let $f\colon \tilde{X}\to X$ be a smooth atlas, where
        $\tilde{X}$ is a Noetherian $F$-finite scheme. By Lemma \ref{extending
        objects in general}, the subobject $f^*\GG_U\subseteq
        f^*\imath^*\EE$ extends to a subobject $\tilde\GG\subseteq f^*\EE$.
        Running the same argument of Lemma \ref{extending objects in general}, we see that $\tilde\GG$ descends to a subobject
        $\GG\subseteq\EE$ extending $\GG_U$.
    \end{proof}
    }

\begin{Theorem}\label{normal open induces surjection on stratified fundamental groups}
		Let $X$ be an irreducible Noetherian geometrically unibranch $F$-finite
        $\FF_p$-scheme and let $\eta $ be the
        generic point of $X$. Then the induced $1$-morphism $\Pi_\eta^{\Fdiv}\to
        \Pi_X^{\Fdiv}$ is a relative gerbe over the field $\cO_X(X)^\perf$. In
        particular, for any dense open $U\subseteq X$, $\Pi_U^{\Fdiv}\to
        \Pi_X^{\Fdiv}$ is a relative gerbe over the field $\cO_X(X)^\perf$.
\end{Theorem}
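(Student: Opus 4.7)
The plan is to verify the characterization of relative gerbes given by Lemma~\ref{relative gerbe = surjective}(4): namely that the pullback functor is fully faithful and that every subobject of a pullback extends. I proceed in three moves: reduce to the case when $X$ is normal, handle an arbitrary dense open inclusion directly, and then deduce the generic-point case by composition with the regular situation of Corollary~\ref{Kindler}.

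For the reduction, since $X$ is Noetherian and geometrically unibranch, the normalization $\nu\colon X^\nu\to X$ is a finite universal homeomorphism (as used already in the proof of Lemma~\ref{perfection-for-normal-rings}), so by Lemma~\ref{passing-to-universal-homeomorphism} the pullback $\nu^*$ induces an equivalence $\Vect^\perf(X)\simeq \Vect^\perf(X^\nu)$, hence a canonical identification $\Pi_X^{\Fdiv}\simeq \Pi_{X^\nu}^{\Fdiv}$. As $\nu$ is a universal homeomorphism it preserves generic points and satisfies $K(X^\nu)=K(X)$, so we may replace $X$ by its normalization and assume $X$ is normal. Corollary~\ref{endomorphism-sheaf-of-trivial-divided-line-bundle} then identifies $\cO_X(X)^\perf$ with $K(X)^\perf=\cO_\eta(\eta)^\perf$, so both gerbes naturally live over this common field.

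For a dense open inclusion $\imath\colon U\hookrightarrow X$ with $X$ normal, I would verify both conditions of Lemma~\ref{relative gerbe = surjective}(4) for $\imath^*\colon \Vect^\perf(X)\to \Vect^\perf(U)$. Extension of subobjects is exactly Lemma~\ref{extending objects in general}. For full faithfulness, rigidity of the tensor categories reduces the statement to the bijectivity of
\[
\imath^*\colon \Hom_X(\unit_X,\EE)\to \Hom_U(\unit_U,\imath^*\EE)
\]
for all $\EE\in \Vect^\perf(X)$. Injectivity follows from the faithfulness clause of Theorem~\ref{F-divided-sheaves-form-Tannakian-category}(B) applied with $T=U$. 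For surjectivity, a nonzero $f_U\colon \unit_U\to \imath^*\EE$ has image a sub-$F$-divided line bundle $\LL_U$ isomorphic to $\unit_U$; by Lemma~\ref{extending objects in general} this $\LL_U$ extends to an $F$-divided sub-line-bundle $\LL\subseteq \EE$ with $\imath^*\LL\cong \unit_U$; by Proposition~\ref{injectivity-on-F-divided-Pic} (this is precisely where normality is used) the restriction $\Pic_F(X)\to \Pic_F(U)$ is injective, so $\LL\cong \unit_X$; after matching trivializations using $\End(\unit_X)=\End(\unit_U)=K(X)^\perf$ from Corollary~\ref{endomorphism-sheaf-of-trivial-divided-line-bundle}, the inclusion $\unit_X\cong \LL\hookrightarrow \EE$ lifts $f_U$ as required.

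Finally, for the generic-point case, apply the dense open case just proved to $U=X_{\sf reg}$, which is dense in $X$ since the $F$-finite Noetherian scheme $X$ is excellent by Kunz's theorem, to see that $\Pi_{X_{\sf reg}}^{\Fdiv}\to \Pi_X^{\Fdiv}$ is a relative gerbe. By Corollary~\ref{Kindler} applied to the regular Noetherian $F$-finite scheme $X_{\sf reg}$, the map $\Pi_\eta^{\Fdiv}\to \Pi_{X_{\sf reg}}^{\Fdiv}$ is a relative gerbe. Both defining conditions of Lemma~\ref{relative gerbe = surjective}(4) are stable under composition (fully faithful functors compose, and a two-step chase extends any subobject of $(\imath\circ j)^*\EE$ first to $\imath^*\EE$ and then to $\EE$), so $\Pi_\eta^{\Fdiv}\to \Pi_X^{\Fdiv}$ is a relative gerbe, and the ``In particular'' assertion has already been treated in the previous paragraph. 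The main obstacle is hidden inside Lemma~\ref{extending objects in general}: since Frobenius is not flat on the non-regular locus, the direct $\eta_*$-style descent argument of Lemma~\ref{extending objects in the regular case} collapses, and one must bypass it via Gabber's alteration theorem together with Bhatt's $h$-descent (Theorem~\ref{h-descent-for-F-divided}); everything else here is formal Tannakian bookkeeping built on that input and on the normality-sensitive Proposition~\ref{injectivity-on-F-divided-Pic}.
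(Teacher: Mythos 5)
Your proof is correct, and its skeleton (normalize via Lemma~\ref{passing-to-universal-homeomorphism}, establish the dense-open case, then compose with Corollary~\ref{Kindler} to reach the generic point) matches the paper's. The one genuine divergence is in how full faithfulness is obtained. The paper only treats the single open $U=X_{\sf reg}$ directly: there the complement has codimension $\ge 2$ in the normal scheme $X$, so $\imath_*\cO_U=\cO_X$ and the projection formula gives full faithfulness of $\Vect^\perf(X)\to\Vect^\perf(X_{\sf reg})$ essentially for free; the statement for an arbitrary dense open $U$ is then deduced a posteriori by factoring $\Pi_\eta^{\Fdiv}\to\Pi_U^{\Fdiv}\to\Pi_X^{\Fdiv}$ through the generic-point result. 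You instead prove full faithfulness for every dense open at once, by combining extension of subobjects (Lemma~\ref{extending objects in general}) with injectivity of $\Pic_F(X)\to\Pic_F(U)$ (Proposition~\ref{injectivity-on-F-divided-Pic}) and the identification of $\End(\unit)$ from Corollary~\ref{endomorphism-sheaf-of-trivial-divided-line-bundle} -- this is exactly the mechanism the paper uses in Lemma~\ref{full-faithfullness-of-restriction-to-open-regular} for the regular/generic-point case, transplanted to the normal setting. Both routes rest on the same hard inputs (Gabber's alterations and Bhatt's $h$-descent hidden inside Lemma~\ref{extending objects in general}, plus the normality-sensitive $\Pic_F$ statement); the paper's codimension-two shortcut is slightly more economical for $X_{\sf reg}$, while your argument handles all dense opens uniformly without the detour through $\eta$. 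Your closing observation that the two conditions of Lemma~\ref{relative gerbe = surjective}(4) are stable under composition is the same (implicit) step the paper uses, and is correct.
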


\begin{proof}
        By \cite[Lemma \href{http://stacks.math.columbia.edu/tag/035R}{035R}
        and Lemma \href{http://stacks.math.columbia.edu/tag/035R}{0GIQ}]{StacksProject}  the normalization morphism $X^{\nu}\to X$ is a finite universal homeomorphism. 
So by Lemma \ref{passing-to-universal-homeomorphism} we can assume that $X$ is normal. 

Consider $U=X_{\sf reg}$. Note that this set is open in $X$ as $X$ is excellent. 	
Since the complement of $U$ in $X$ has codimension $\ge 2$ and $X$ is normal, we have
$\imath_*\cO_U=\cO_X$. Then the projection formula implies that  the functor
${\sf Vect} ^{\sf perf}(X)\to {\sf Vect} ^{\sf perf}(U)$ is fully faithful. In
this case,  $\Pi_U^{\Fdiv}\to \Pi_X^{\Fdiv}$ is a relative gerbe by
Lemma \ref{extending objects in general} and Lemma \ref{relative gerbe = surjective}.
		
To finish the proof, we consider the following composition:
\[\PiF_{\eta}\longrightarrow\PiF
_{X_{\sf reg}}\longrightarrow\PiF_X\]
where the left arrow is a relative gerbe by Corollary \ref{Kindler}; the
        right arrow is a relative gerbe by the above, so the composition is
        also a relative gerbe.  
\end{proof}

Theorem \ref{normal open induces surjection on stratified fundamental groups}
also generalizes to algebraic stacks. Recall that an algebraic stack $X$ is called
\emph{geometrically unibranch} if it admits a smooth atlas $U\twoheadrightarrow
X$, where $U$ is a geometrically unibranch scheme. { This is well-defined thanks
to
\cite[\href{https://stacks.math.columbia.edu/tag/0DQ2}{0DQ2}]{StacksProject}.}

\begin{Theorem} \label{normal open induces surjection on stratified fundamental
    groups stack case} Let $X$ be an irreducible Noetherian geometrically unibranch
    weakly $F$-finite
        $\FF_p$-algebraic stack and let $X'\subset X$ a dense open substack, then
        $\Pi_{X'}^{\Fdiv}\to
        \Pi_X^{\Fdiv}$ is a relative gerbe over the field $\cO_X(X)^\perf$.
\end{Theorem}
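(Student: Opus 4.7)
The plan is to verify criterion (4) of Lemma \ref{relative gerbe = surjective}: full faithfulness of the pullback functor $\imath^*\colon \Vect^\perf(X) \to \Vect^\perf(X')$, together with the property that every subobject of a pulled-back $F$-divided bundle is itself pulled back. The second property is exactly Lemma \ref{extending objects in general stack version}, so the substantive task is to prove full faithfulness.

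First, I would reduce to the case where $X$ is normal. Since $X$ is geometrically unibranch, the normalization $X^\nu \to X$, constructed via the normalization of a smooth atlas (using that normalization commutes with smooth base change), is a finite universal homeomorphism of algebraic stacks. Applying Lemma \ref{passing-to-universal-homeomorphism} to $X^\nu \to X$ and to the induced normalization of $X'$ reduces to the case in which $X$ (and hence $X'$) is normal.

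Second, I would choose a smooth atlas $u\colon V \to X$ with $V$ a Noetherian $F$-finite geometrically unibranch scheme; using irreducibility of $X$, we may arrange that each irreducible component of $V$ is integral and maps smoothly and surjectively onto $X$. The preimage $V'\coloneqq u^{-1}(X')$ is dense open in $V$ by flatness of $u$. Applying Theorem \ref{normal open induces surjection on stratified fundamental groups} componentwise to $V$, the functor $\Vect^\perf(V) \to \Vect^\perf(V')$ is fully faithful.

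Finally, I would descend full faithfulness from $V$ to $X$ using smooth descent of $\Vect^\perf$. Faithfulness is immediate: if $f \in \Hom_{\Vect^\perf(X)}(E,F)$ restricts to zero on $X'$, then $u^*f$ restricts to zero on $V'$, so $u^*f=0$ by the scheme case, and then $f=0$ by faithfulness of $u^*$ from Theorem \ref{F-divided-sheaves-form-Tannakian-category}(B). For fullness, any $g\colon \imath^*E \to \imath^*F$ in $\Vect^\perf(X')$ pulls back to $V'$ and lifts via the scheme case to $\tilde g\colon u^*E \to u^*F$ on $V$; to show that $\tilde g$ descends to a morphism on $X$, one verifies the cocycle condition $p_1^*\tilde g = p_2^*\tilde g$ on $V\times_X V$. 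The two sides already agree on the dense open $V'\times_{X'}V' \subseteq V\times_X V$ (density follows from smoothness of $u$, since the preimage of a dense open under a flat morphism is dense, and intersections of dense opens in a Noetherian space are dense). Applying Theorem \ref{F-divided-sheaves-form-Tannakian-category}(B) on each connected component of the Noetherian weakly $F$-finite algebraic space $V\times_X V$, via a nonempty affine scheme lying in $V'\times_{X'}V'$, propagates the equality from the dense open to the whole fiber product. The main technical obstacle I anticipate is precisely this last descent step: carefully checking that $V\times_X V$ is a Noetherian weakly $F$-finite algebraic stack to which Theorem \ref{F-divided-sheaves-form-Tannakian-category}(B) applies, and that the density and connectedness hypotheses hold componentwise; once this is done, Lemma \ref{relative gerbe = surjective} concludes.
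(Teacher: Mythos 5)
Your proposal is correct and follows essentially the same route as the paper: pass to a smooth groupoid presentation $R\rightrightarrows U$ (your $V\times_X V\rightrightarrows V$), apply the scheme-case Theorem \ref{normal open induces surjection on stratified fundamental groups} to get full faithfulness on the atlas and faithfulness on the relations, descend to $X$, and combine with Lemma \ref{extending objects in general stack version} via criterion (4) of Lemma \ref{relative gerbe = surjective}; your explicit cocycle-condition check is just the descent step the paper leaves implicit. The only difference is your initial reduction to normal $X$ via normalization of the stack, which is harmless but unnecessary since the scheme-case theorem already covers geometrically unibranch atlases.
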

\begin{proof} Let us choose a smooth groupoid presentation $R\rightrightarrows U$ of $X$
    by algebraic spaces, where $U$ is a Noetherian geometrically
    unibranch $F$-finite scheme. Restricting the presentation to $X'$ we
    get a presentation $R'\rightrightarrows U'$ of $X'$. Note that by
    construction, $U'\subset U$ and $R'\subset R$ are dense opens.
Moreover,   $U, R$ are $F$-finite Noetherian geometrically unibranch algebraic spaces,
    so by Theorem \ref{F-divided-sheaves-form-Tannakian-category} and Theorem
    \ref{normal open induces surjection on stratified fundamental groups}, the
    restriction functor $\Vect^\perf(R)\to\Vect^\perf(R')$ is faithful, while
    $\Vect^\perf(U)\to\Vect^\perf(U')$ is fully faithful. Applying fpqc
    descent of $\Vect^\perf(-)$ to the presentations $R\rightrightarrows U$ and
    $R'\rightrightarrows U'$ one sees that the restriction functor
    $\Vect^\perf(X)\to\Vect^\perf(X')$ is fully faithful. This, together with Lemma
    \ref{extending objects in general stack version}, completes the proof.
\end{proof}

Let $X, Y$ be irreducible schemes. Let us recall that a morphism $f: X\to Y$ is called \emph{birational}
if it induces an isomorphism of the function fields (see \cite[Definition \href{https://stacks.math.columbia.edu/tag/01RO}{01RO}]{StacksProject} for a more general definition).

\begin{Corollary}\label{birational-map-on-gerbes}
    Let $f\colon \tilde X\to X$ be a birational morphism of finite type between irreducible $F$-finite
Noetherian $\FF_p$-schemes with $X$ being geometrically unibranch. Then the induced morphism $f_*: \PiF_{\tilde X}\to\PiF_{X} $ is a relative gerbe over $\cO_X(X)^{\sf perf}$. 
\end{Corollary}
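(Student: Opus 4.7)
The plan is to exploit the fact that a birational morphism identifies the generic points of $\tilde X$ and $X$, so that the map of $F$-divided fundamental gerbes factors through a common generic-point gerbe, for which Theorem \ref{normal open induces surjection on stratified fundamental groups} has already done the heavy lifting. Let $\eta$ denote the generic point of $X$; by birationality it also serves as the generic point of $\tilde X$, with the identification $K(\tilde X) \simeq K(X)$ induced by $f$.

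Before combining anything, I would first verify that $\Pi_{\tilde X}^{\Fdiv}$ and $\Pi_X^{\Fdiv}$ are actually gerbes over the same base field. By Theorem \ref{F-divided-sheaves-form-Tannakian-category}(C), each of $\cO_X(X)^{\sf perf}$ and $\cO_{\tilde X}(\tilde X)^{\sf perf}$ is a perfect field sitting inside the perfection of the respective function field. Since $X$ is geometrically unibranch, Corollary \ref{endomorphism-sheaf-of-trivial-divided-line-bundle} identifies $\cO_X(X)^{\sf perf}$ with $K(X)^{\sf perf} = K(\tilde X)^{\sf perf}$, so the chain of inclusions $\cO_X(X)^{\sf perf} \hookrightarrow \cO_{\tilde X}(\tilde X)^{\sf perf} \hookrightarrow K(\tilde X)^{\sf perf}$ collapses and both gerbes live over the common field $k \coloneqq \cO_X(X)^{\sf perf}$.

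With this in place, I would consider the factorization of morphisms of affine gerbes over $k$,
$$\Pi_\eta^{\Fdiv} \longrightarrow \Pi_{\tilde X}^{\Fdiv} \longrightarrow \Pi_X^{\Fdiv},$$
whose composition is a relative gerbe by Theorem \ref{normal open induces surjection on stratified fundamental groups} applied to the geometrically unibranch $X$. Using Lemma \ref{relative gerbe = surjective}(2), this means that, after picking a field extension $l/k$ and a compatible chain of sections $\tilde y \mapsto y \mapsto f_*(y)$, the resulting homomorphism of affine $l$-group schemes $\Aut(\tilde y) \to \Aut(f_*(y))$ is faithfully flat and factors through $\Aut(y)$. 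A composition of homomorphisms of affine group schemes which is scheme-theoretically surjective forces the second map to be scheme-theoretically surjective (since the image of the outer composition is contained in the image of the second arrow), so $\Aut(y) \to \Aut(f_*(y))$ is faithfully flat. Appealing to Lemma \ref{relative gerbe = surjective} once more yields the corollary.

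The only mild subtlety is the bookkeeping around the base fields and lifting sections compatibly through the tower of gerbes, both of which are essentially formal consequences of the definitions; there is no geometric obstacle, because the work of actually producing the relative gerbe at the generic-point level has already been carried out in Theorem \ref{normal open induces surjection on stratified fundamental groups}.
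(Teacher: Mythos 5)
Your argument is correct and follows essentially the same route as the paper: both proofs factor $f_*$ through an intermediate $F$-divided gerbe whose map to $\Pi_X^{\Fdiv}$ is a relative gerbe by Theorem \ref{normal open induces surjection on stratified fundamental groups}, and then conclude via the (automorphism-group-scheme) fact that the second factor of a composite relative gerbe is itself a relative gerbe. The only difference is that the paper factors through $\PiF_{f^{-1}(U)}$ for an open $U\subset X$ over which $f$ restricts to an isomorphism (Stacks Project, Tag 0BAC), whereas you factor through the generic point, which obliges you to identify the base fields via Corollary \ref{endomorphism-sheaf-of-trivial-divided-line-bundle} --- a check you carry out correctly.
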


\begin{proof}
    By \cite[Lemma
    \href{https://stacks.math.columbia.edu/tag/0BAC}{0BAC}]{StacksProject},
    there exists a non-empty open subset $U\subset X$ such that
    $f|_{f^{-1}(U)}$ is an isomorphism. Note that both $U$ and $f^{-1}(U)$ are
    irreducible, so by  Theorem \ref{F-divided-sheaves-form-Tannakian-category},
    their $F$-divided gerbes exist.
Therefore we have induced maps 
$\PiF_{f^{-1}(U)}\to\PiF_{\tilde X}$ and  $\PiF_{f^{-1}(U)}\to\PiF_{X}$, where
the latter is a relative gerbe
by Theorem \ref{normal open induces surjection on stratified fundamental
groups}. So $f_*: \PiF_{\tilde X}\to\PiF_{X} $ is also a relative gerbe. 
\end{proof}

	\begin{Corollary}\label{subobject-of-F-divided-bundle}
        { Let $X$ be an irreducible Noetherian normal  weakly $F$-finite $\FF_p$-algebraic
        stack.} Then 
${\sf Vect} ^{\sf perf}(X)$ is a Serre subcategory of ${\sf Ref} ^{\sf perf}(X)$.
	\end{Corollary}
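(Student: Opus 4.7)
The plan is to transport the question across the equivalence of abelian categories
$j^*:\Reff^\perf(X) \xrightarrow{\sim} \Vect^\perf(X_{\sf reg})$
provided by Lemma \ref{description-of-Ref-perf}. Under this equivalence, the inclusion
$\Vect^\perf(X) \hookrightarrow \Reff^\perf(X)$ corresponds to the restriction functor
$j^*\colon \Vect^\perf(X) \to \Vect^\perf(X_{\sf reg})$, and a subobject (resp.\ quotient, resp.\ extension) in $\Reff^\perf(X)$ corresponds to a subobject (resp.\ quotient, resp.\ extension) in $\Vect^\perf(X_{\sf reg})$. Since $X$ is normal, $X_{\sf reg} \subseteq X$ is a dense open substack whose complement has codimension $\geq 2$, so Theorem \ref{normal open induces surjection on stratified fundamental groups stack case} applies to give that $\Pi^{\Fdiv}_{X_{\sf reg}} \to \Pi^{\Fdiv}_X$ is a relative gerbe. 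By Lemma \ref{relative gerbe = surjective} this means that the restriction $\Vect^\perf(X) \to \Vect^\perf(X_{\sf reg})$ is fully faithful and that every subobject of the pullback of an object from $\Vect^\perf(X)$ comes from $\Vect^\perf(X)$.

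For closure under subobjects, let $\cE \in \Vect^\perf(X)$ and let $\cG \subseteq \cE$ be a subobject in $\Reff^\perf(X)$. Then $\cG|_{X_{\sf reg}} \subseteq \cE|_{X_{\sf reg}}$ in $\Vect^\perf(X_{\sf reg})$, so by Lemma \ref{relative gerbe = surjective}(4) there exists $\cG' \in \Vect^\perf(X)$ such that $\cG'|_{X_{\sf reg}} = \cG|_{X_{\sf reg}}$ as subobjects of $\cE|_{X_{\sf reg}}$. Because $\cG'$ is locally free and hence reflexive, while $\cG$ is reflexive by assumption and $X \setminus X_{\sf reg}$ has codimension at least two in the normal stack $X$, both satisfy $j_*j^*(\cdot) = (\cdot)$, so the equivalence of Lemma \ref{description-of-Ref-perf} forces $\cG \cong \cG' \in \Vect^\perf(X)$. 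Closure under quotients follows dually: if $\cE \twoheadrightarrow \cQ$ with $\cE \in \Vect^\perf(X)$, then its kernel $\cK \subseteq \cE$ lies in $\Vect^\perf(X)$ by the previous step, the quotient $\cE/\cK$ formed in the abelian category $\Vect^\perf(X)$ is again a vector bundle, and under the equivalence $j^*$ it is identified with $\cQ$.

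For closure under extensions, consider $0 \to \cA \to \cC \to \cB \to 0$ in $\Reff^\perf(X)$ with $\cA,\cB \in \Vect^\perf(X)$. Restricting to $X_{\sf reg}$ gives a short exact sequence of $F$-divided vector bundles on $X_{\sf reg}$, so $\cC|_{X_{\sf reg}}$ is locally free and $\cC = j_*(\cC|_{X_{\sf reg}})$. I need to show that $\cC$ is locally free on $X$. The natural approach is to produce a lift of the extension class in $\Ext^1_{\Vect^\perf(X_{\sf reg})}(\cB|_{X_{\sf reg}},\cA|_{X_{\sf reg}})$ to $\Ext^1_{\Vect^\perf(X)}(\cB,\cA)$: by Lemma \ref{extending objects in general stack version} the subobject $\cA \subseteq \cC$ already lives on $X$, and full faithfulness of $j^*$ together with the closure properties just established applied to the Yoneda construction of the extension produces a candidate $F$-divided bundle on $X$ restricting to $\cC|_{X_{\sf reg}}$; by the equivalence of Lemma \ref{description-of-Ref-perf} this candidate is isomorphic to $\cC$.

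The main obstacle is this last step, namely that the restriction map on $\Ext^1$ need not be surjective for an arbitrary morphism of Tannakian categories that is fully faithful and closed under subobjects. What rescues the situation here is that $\cA, \cB$ come from $X$ and the extension problem is controlled by subobjects of $\cA \oplus \cC$; alternatively one can pull back along the smooth atlas $U \twoheadrightarrow X$ together with a Gabber regular alteration $\tilde U \to U$, use that on the regular $\tilde U$ one has $\Reff^\perf(\tilde U) = \Vect^\perf(\tilde U)$ (see Lemma \ref{description-of-Ref-perf}), and then descend local freeness back to $X$ via Theorem \ref{h-descent-for-F-divided}. Either route produces that $\cC$ lies in $\Vect^\perf(X)$, completing the verification that $\Vect^\perf(X)$ is a Serre subcategory of $\Reff^\perf(X)$.
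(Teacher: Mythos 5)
Your treatment of subobjects and quotients is correct and is essentially the paper's argument: the paper also transports the problem across the equivalence of Lemma \ref{description-of-Ref-perf} and then invokes Lemma \ref{extending objects in general stack version} to extend a subobject from $X_{\sf reg}$ to $X$ (you reach the same point by routing through Theorem \ref{normal open induces surjection on stratified fundamental groups stack case} and Lemma \ref{relative gerbe = surjective}, whose proofs rest on that very lemma), and it handles quotients by dualizing where you pass through kernels; both reductions are fine.

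The genuine gap is the extension step, and neither of your two proposed routes closes it. The ``$\Ext^1$-lifting'' route is precisely the statement to be proved, and the alteration route does not work as described: Corollary \ref{Kindler} and the relative-gerbe property give full faithfulness and extension of \emph{subobjects} of pullbacks, not essential surjectivity, so there is no reason that the $F$-divided bundle $\cC|_{X_{\sf reg}}$, pulled back to the preimage of $X_{\sf reg}$ in a regular alteration, extends over the whole alteration -- and even if it did, the object produced by $h$-descent would still have to be identified with $j_*(\cC|_{X_{\sf reg}})$. For what it is worth, the paper disposes of this step with a single ``clearly'' and spends its effort only on subobjects; the intended argument is presumably a direct sheaf-theoretic one rather than anything categorical. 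Concretely, writing $\cA=\{A_i\}$, $\cB=\{B_i\}$, $\cC=\{C_i\}$, the kernel of $C_i\to B_i$ is a reflexive sheaf agreeing with $A_i$ outside a closed substack of codimension $\ge 2$, hence equals $A_i$; the whole problem therefore reduces to showing that each map of sheaves $C_i\to B_i$ is \emph{surjective} on all of $X$ (its cokernel $Q_i$ is a priori only supported on $X\setminus X_{\sf reg}$, and one must rule it out, e.g.\ by relating $Q_i$ to $F_X^*Q_{i+1}$ and using Proposition \ref{local-freeness-F-div-sheaves} to force an $F$-divided torsion sheaf to vanish). Once surjectivity is known, $C_i$ is a sheaf extension of locally free sheaves, hence locally free, and $F_X^*C_{i+1}$ is then already reflexive so that $F_X^{[*]}C_{i+1}=F_X^*C_{i+1}\cong C_i$, i.e.\ $\cC\in\Vect^\perf(X)$. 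As written, your proposal acknowledges but does not supply this step, so it is incomplete exactly where you suspected.
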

	\begin{proof} 
Clearly, the subcategory 	${\sf Vect} ^{\sf perf}(X)$ is closed under extensions, so we only need to show that it is closed under taking subobjects and quotients. Since giving a quotient object is equivalent to giving a subobject of the dual, we can restrict to considering subobjects. 
Let $i\colon X_{\sf reg}\subseteq X$ be the regular locus of $X$.
By Lemma \ref{description-of-Ref-perf} we need to show that for any object $\EE$ of $ {\sf Vect} ^{\sf perf}(X)$ and a subobject $\FF _U\subset i^*\EE$ there exists a unique subobject $\FF \subset \EE$ such that
$i^*\FF= \FF_U$ as subobjects of $i^*\EE$. This follows from { Lemma
\ref{extending objects in general stack version}}. \end{proof}

\begin{Remark}
For general  irreducible Noetherian geometrically unibranch $F$-finite $\FF_p$-scheme $X$, the above proof gives only the fact that ${\sf Vect} ^{\sf perf}(X)$ is a Serre subcategory of ${\sf Vect} ^{\sf perf}((X_{\sf red})_{\sf reg})$.
\end{Remark}


Let $\EE$ and $\FF$ be $F$-divided vector bundles on a Noetherian $\FF_p$-scheme $X$.
Let us recall that one can define the sheaf $\cHom (\FF , \EE)$ as the $F$-divided bundle
with  $\cHom (\FF , \EE)_i= \cHom (F_i , E_i)$ and obvious induced isomorphisms. 
We also  define \emph{the sheaf $\cHom ^h(\FF , \EE)$ of horizontal maps from $\FF$ to $\EE$} by
setting
$$\left( \cHom ^h(\FF, \EE)\right) (U):= \Hom _U(\FF|_U , \EE| _U)$$
for any open $U\subset X$ (note that this presheaf is a sheaf).

We also define \emph{the sheaf $\EE^h$ of horizontal sections of $\EE$} as the sheaf $\cHom ^h(\unit_X , \EE)$. 
If we write $\EE= \{E_i, \sigma_i\}$ then we have
\[\EE^h= \lim \left(... \stackrel{\tau_2}{\longrightarrow}E_2
\stackrel{\tau_1}{\longrightarrow}E_1
\stackrel{\tau_0}{\longrightarrow}E_0\right),\]
where $\tau_i$ is the composition of the natural map $E_{i+1}\to F_X^*E_{i+1}$ with $\sigma_i$.
In particular, our definition agrees with the one from \cite[\S1]{Gieseker1975}.
Note that both $\cHom ^h(\FF, \EE)$ and $\EE^h$ are sheaves of modules over the sheaf of rings $\unit _X^h=\cO_X^{\sf perf}.$

    \begin{Lemma}\label{evaluation map - general nonsense}
Let  $\Gamma$ be an affine gerbe over $k$, and let
$\alpha\colon\Gamma\to\Spec(k)$ be the projection map. Let $E\in\Vect(\Gamma)$
be a vector bundle. Then $\Hl^0(E)$ is a finite dimensional $k$-vector space,
and the adjunction map
\begin{equation}\label{adjunction}
   \alpha^*\alpha_*E\longrightarrow E
\end{equation}
is injective.
\end{Lemma}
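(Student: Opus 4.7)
The plan is to trivialize $\Gamma$ by a faithfully flat extension of the base field and then reduce both claims to standard statements about $G$-invariants in a finite-dimensional representation of an affine group scheme $G$.

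Being an affine gerbe, $\Gamma$ admits a section after some field extension $l/k$, so $\Gamma_l := \Gamma \times_k l \simeq \cB_l G$ for some affine $l$-group scheme $G$. Under the equivalence between $\Vect(\cB_l G)$ and the category of finite-dimensional $l$-linear $G$-representations recalled in the notation section, the pullback $E_l \in \Vect(\Gamma_l)$ corresponds to a finite-dimensional $G$-representation $V$ over $l$, and its pushforward $(\alpha_l)_* E_l$ is the invariant subspace $V^G \subseteq V$.

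To obtain finite-dimensionality of $\Hl^0(E) = \alpha_* E$, I would use that $\alpha_*$ commutes with flat base change along $\Spec(l) \to \Spec(k)$, yielding $(\alpha_* E) \otimes_k l \cong V^G$. Since $V^G$ is a finite-dimensional $l$-vector space (being a subspace of the finite-dimensional $V$), $\alpha_* E$ is a finite-dimensional $k$-vector space of the same dimension. For injectivity of the adjunction map $\alpha^* \alpha_* E \to E$, I would again check the property after the faithfully flat base change to $l$. Under the equivalence above, $\alpha_l^*$ sends an $l$-vector space $W$ to the trivial $G$-representation on $W$, so the base-changed map becomes the canonical inclusion $V^G \hookrightarrow V$ of the subspace of invariants into $V$, which is manifestly injective. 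Faithful flatness of $l/k$ then descends injectivity to $\Gamma$.

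The only real technical point worth highlighting is the compatibility of $\alpha_*$ with faithfully flat base change along $\Spec(l) \to \Spec(k)$. Choosing any fpqc atlas $\Spec R \to \Gamma$ and computing $\alpha_* E$ as the equalizer of the two natural maps between sections over the first two stages of the \v Cech nerve of this cover, the compatibility reduces to the elementary observation that tensoring with $l$ over $k$ preserves finite limits of $k$-modules. Everything else in the proof is purely formal, so no genuine obstacle is expected.
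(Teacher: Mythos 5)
Your proposal is correct and follows essentially the same route as the paper's proof: reduce to a trivial gerbe $\cB_l G$ over a field extension $l/k$, use compatibility of $\alpha_*$ (equivalently $\Hl^0$) with the faithfully flat base change $k\to l$, and identify the adjunction map with the inclusion of invariants $V^G\hookrightarrow V$. The only difference is that you spell out the base-change compatibility via the equalizer presentation of $\alpha_*E$, which the paper simply asserts; this is a harmless and correct elaboration.
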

\begin{proof} The equation \eqref{adjunction} is injective iff there exists a
    field
    extension $l/k$ such that \eqref{adjunction}$\otimes_kl$ is injective, {
    and we have $\Hl^0(E\otimes_kl)=\Hl^0(E)\otimes_kl$}. Thus
    we may assume that $\Gamma=\cB_kG$ for some affine group scheme $G$. In
    this case $E$ is a finite dimensional $G$-representation, and
    \eqref{adjunction} is nothing but the inclusion
    $E^G\subseteq E$, which is injective. For the first claim, it is enough to observe
    that {$\Hl^0(E)=\alpha_*E=E^G$}.
\end{proof}

\begin{Lemma}\label{evaulation-map}
Let  $X$ be a connected Noetherian  $F$-finite $\FF_p$-scheme and let $k=\cO_X(X)^{\sf perf}$. Then 
for any $\EE\in \Vect^{\perf}(X)$,
$\EE ^h(X)$
is a finite dimensional $k$-vector space and the canonical evaluation map 	
$$\EE^h(X)\otimes _k \unit_X\to \EE$$ 
is injective.
\end{Lemma}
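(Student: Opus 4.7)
The plan is to deduce both claims from Lemma~\ref{evaluation map - general nonsense} applied to the affine gerbe $\Gamma := \Pi_X^{\Fdiv}$, exploiting the Tannakian equivalence $\Vect^\perf(X) \simeq \Vect(\Gamma)$ provided by Theorem~\ref{F-divided-sheaves-form-Tannakian-category}. Note that since $X$ is connected, parts (C)--(E) of that theorem apply, so $\Gamma$ is a pro-smooth banded affine gerbe over the field $k = \cO_X(X)^\perf$, and I will denote its structure morphism by $\alpha\colon \Gamma \to \Spec(k)$.

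First, I would unpack $\EE^h(X)$ categorically. By definition
\[
\EE^h(X) = \Hom_{\Vect^\perf(X)}(\unit_X, \EE),
\]
so if $E \in \Vect(\Gamma)$ denotes the object corresponding to $\EE$ under the Tannakian equivalence (which sends $\unit_X$ to the unit $\unit_\Gamma$), then $\EE^h(X) \cong \Hom_\Gamma(\unit_\Gamma, E) = \alpha_* E$. Lemma~\ref{evaluation map - general nonsense} immediately gives that this is a finite-dimensional $k$-vector space and that the adjunction morphism $\alpha^* \alpha_* E \to E$ is injective in $\Vect(\Gamma)$.

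Second, I would identify the adjunction $\alpha^* \alpha_* E \to E$ with the evaluation map $\EE^h(X) \otimes_k \unit_X \to \EE$ under the Tannakian equivalence. This is a formal check: the pullback functor $\alpha^*\colon \Vect(k) \to \Vect(\Gamma)$ sends a finite-dimensional $k$-vector space $V$ to the trivial vector bundle $V \otimes_k \unit_\Gamma$, which corresponds to $V \otimes_k \unit_X$ under the equivalence, and the adjunction unit, when evaluated at $E$, is by construction the map $s \otimes 1 \mapsto s$ given by interpreting global horizontal sections as morphisms out of the unit.

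Finally, I would transport the injectivity back to $\Vect^\perf(X)$. Since the Tannakian equivalence is an equivalence of abelian categories, a morphism is a monomorphism on one side iff it is a monomorphism on the other. In $\Vect^\perf(X)$, kernels are formed componentwise (as in the proof of Theorem~\ref{F-divided-sheaves-form-Tannakian-category}), so injectivity of $\EE^h(X) \otimes_k \unit_X \to \EE$ in the Tannakian sense is equivalent to injectivity of each level $\EE^h(X) \otimes_k \cO_X \to E_i$ as $\cO_X$-modules, which is exactly the desired conclusion. The only substantive point is the abstract Lemma~\ref{evaluation map - general nonsense}; once that is available, the rest is bookkeeping through the Tannakian dictionary and there is no real obstacle.
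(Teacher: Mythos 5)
Your proposal is correct and is essentially identical to the paper's own proof: both apply Lemma~\ref{evaluation map - general nonsense} to the gerbe $\Gamma=\Pi_X^{\Fdiv}$, identify $\EE^h(X)$ with $\Hl^0(E)=\alpha_*E$ under the Tannakian equivalence, and recognize the evaluation map as the adjunction map. Your write-up merely spells out the bookkeeping (the identification of $\alpha^*\alpha_*E\to E$ with the evaluation map and the componentwise nature of kernels) that the paper leaves implicit.
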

\begin{proof} In Lemma \ref{evaluation map - general nonsense}, we take
    $\Gamma\coloneqq \Pi_X^\Fdiv$. Then viewing $\EE$ as a vector bundle on
    $\Gamma$, we have $\EE^h(X)=\Hl^0(\EE)$ and the
    evaluation map is the adjunction map \eqref{adjunction}.
\end{proof}

\begin{Corollary}
Let  $X$ be an irreducible Noetherian  geometrically unibranch $F$-finite $\FF_p$-scheme.
Then $\unit _X^h$ is the constant sheaf associated to the field $k= \cO_X(X)^{\sf perf}$, $\cHom ^h(\FF, \EE)$
is the constant sheaf associated to the finite dimensional $k$-vector space $\Hom _X(\FF , \EE)$,
and $\EE^h$ is the constant sheaf associated to the finite dimensional $k$-vector space $\EE^h (X)$.
\end{Corollary}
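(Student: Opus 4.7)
The plan is to reduce all three statements to a single input, namely full faithfulness of the restriction functor $\Vect^\perf(X) \to \Vect^\perf(U)$ for every nonempty open $U \subseteq X$. Observing that irreducibility, geometric unibranch-ness, $F$-finiteness, and Noetherianness are all preserved under passing to open subsets, any such $U$ again satisfies the hypotheses of Theorem \ref{normal open induces surjection on stratified fundamental groups}. That theorem, combined with Lemma \ref{relative gerbe = surjective}, tells us that $\Pi_U^{\Fdiv}\to\Pi_X^{\Fdiv}$ is a relative gerbe, which is exactly the claim that $\Vect^\perf(X)\to\Vect^\perf(U)$ is fully faithful. In particular, for all $\FF,\EE\in\Vect^\perf(X)$, the restriction
\[
\Hom_X(\FF,\EE)\;\longrightarrow\;\Hom_U(\FF|_U,\EE|_U)
\]
is a bijection for every nonempty open $U\subseteq X$.

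From this, I would deduce the claim about $\cHom^h(\FF,\EE)$ as follows. Since $X$ is irreducible, every nonempty open $U\subseteq X$ is irreducible and hence connected, so the constant presheaf with value $\Hom_X(\FF,\EE)$ is already a sheaf on the collection of nonempty opens. The full faithfulness above identifies this presheaf with $U\mapsto\Hom_U(\FF|_U,\EE|_U)$, which is precisely $\cHom^h(\FF,\EE)$. Hence $\cHom^h(\FF,\EE)$ is the constant sheaf associated to the $k$-vector space $\Hom_X(\FF,\EE)$, where $k=\cO_X(X)^\perf$ by Theorem \ref{F-divided-sheaves-form-Tannakian-category}(C). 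The first assertion of the corollary is the special case $\FF=\EE=\unit_X$ (and is also directly Corollary \ref{endomorphism-sheaf-of-trivial-divided-line-bundle}), while the third is the special case $\FF=\unit_X$, using $\EE^h=\cHom^h(\unit_X,\EE)$.

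For finite-dimensionality over $k$, I would apply Lemma \ref{evaulation-map} to the internal Hom $\cHom(\FF,\EE)=\FF^\vee\otimes\EE\in\Vect^\perf(X)$: we have
\[
\Hom_X(\FF,\EE)\;=\;\Hom_X(\unit_X,\cHom(\FF,\EE))\;=\;\cHom(\FF,\EE)^h(X),
\]
which is finite-dimensional over $k$ by that lemma. The same lemma applied to $\EE$ itself gives finite-dimensionality of $\EE^h(X)$.

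I do not anticipate any genuine obstacle: the only real content is the full faithfulness input, which has already been established in Theorem \ref{normal open induces surjection on stratified fundamental groups}, and the finiteness input, which is exactly Lemma \ref{evaulation-map}. The irreducibility of $X$ is used solely to bypass the sheafification step in identifying the sheaf of horizontal sections with a constant sheaf.
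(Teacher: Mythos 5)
Your proposal is correct and follows essentially the same route as the paper: full faithfulness of $\Vect^\perf(X)\to\Vect^\perf(U)$ from Theorem \ref{normal open induces surjection on stratified fundamental groups} (via Lemma \ref{relative gerbe = surjective}) gives the constant-sheaf identifications, and Lemma \ref{evaulation-map} applied to $\cHom(\FF,\EE)$ via $\Hom_X(\FF,\EE)=\left(\cHom(\FF,\EE)\right)^h(X)$ gives finite-dimensionality over $k=\cO_X(X)^\perf$. Your extra remark that irreducibility makes the constant presheaf already a sheaf is a useful explicit justification of a step the paper leaves implicit.
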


\begin{proof}
By Theorem  \ref{F-divided-sheaves-form-Tannakian-category} (C), we know that $k=\End \unit_X$ and it is a field.
Finite dimensionality of  $\Hom _X(\FF , \EE)$ follows from the previous lemma and the fact that 
	$\Hom _X(\FF , \EE)=  \left(\cHom (\FF, \EE)\right)^h (X)$. 
Now the corollary follows from the fact that by Theorem \ref{normal open induces surjection on stratified fundamental groups} for any open subset $U\subset X$ the restriction functor ${\sf Vect} ^{\sf perf}(X)\to {\sf Vect} ^{\sf perf}(U)$ is fully faithful.
\end{proof}

\section{$F$-divided vector bundles on projective varieties}

{ This section revisits an important technique from \cite{Esnault-Mehta2010}, which allows one to relate the constituent bundles of an $F$-divided sheaf to points in a moduli space. We generalize this technique from smooth projective varieties to the setting of normal projective varieties.}

{ Let us fix a perfect field $k$ of positive characteristic.}
In the following we use the notation from \cite{Fulton1998}. Let $X$ be  a proper connected $k$-scheme and let $A_i(X)$ be the Chow group of dimension $i$ cycles on $X$. We have a well-defined degree map $\int_X: 
A_{*}(X)\to \ZZ$. For a vector bundle $E$ on $X$ one defines operational Chern classes $c_i(E)\cap (\cdot): A_*(X)\to A_{*-i}(X)$. Then any polynomial $P(E)$ in the Chern classes of $E$ operates on $A_*(X)$.
We say that the Chern classes of $E$ \emph{vanish numerically} if for any class $\alpha\in A_*(X)$ and any homogeneous polynomial $P(E)$ of degree $>0$ in the Chern classes of $E$ we have $\int _XP(E)\cap \alpha=0$.
The proof of the following lemma is the same as that of \cite[Lemma 2.1]{Esnault-Mehta2010}.

\begin{Lemma}\label{vanishing-Chern-classes}
Let $X$ be a connected proper $k$-scheme. Then for any $\EE= \{E_n, \sigma_n\}\in \Vect^{\perf}(X)$
the Chern classes of $E_n$ vanish numerically. 
\end{Lemma}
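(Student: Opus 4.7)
The plan is to follow the strategy of \cite[Lemma 2.1]{Esnault-Mehta2010}: use the $F$-divided structure to show that the degree $\int_X P(E_n) \cap \alpha$ is an integer divisible by arbitrarily high powers of $p$, and therefore vanishes.

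First, I would iterate the structure isomorphisms $\sigma_i\colon F_X^* E_{i+1} \xrightarrow{\sim} E_i$ to obtain a canonical identification $E_n \cong (F_X^m)^* E_{n+m}$ for every $m \ge 0$. Second, I would invoke the standard identity $c_i(F_X^* E) = p^i\, c_i(E)$ for operational Chern classes. This reduces, by the splitting principle for operational classes (see \cite{Fulton1998}, Chapters~3 and~17), to the case of a line bundle $L$, for which $F_X^* L = L^{\otimes p}$ gives $c_1(F_X^* L) = p \cdot c_1(L)$ at once. Consequently, for any homogeneous polynomial $P$ of degree $d > 0$ in the Chern classes,
\[
   P(E_n) \;=\; P\bigl((F_X^m)^* E_{n+m}\bigr) \;=\; p^{md}\, P(E_{n+m})
\]
as operators on $A_*(X)$.

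Finally, for any $\alpha \in A_*(X)$, set $N := \int_X P(E_n) \cap \alpha \in \ZZ$. Capping the preceding identity with $\alpha$ and applying the degree map yields $N = p^{md} \int_X P(E_{n+m}) \cap \alpha$ for every $m \ge 0$, so the fixed integer $N$ is divisible by $p^{md}$ for all $m$. Since $d > 0$, this forces $N = 0$, as required. The argument is essentially formal and presents no serious obstacle; the one point deserving a little care, given that we have weakened the hypotheses from smooth projective (as in \cite{Esnault-Mehta2010}) to merely proper, is to invoke the splitting principle in the version valid for operational Chern classes on an arbitrary proper $k$-scheme, which is precisely the framework developed in Fulton's Chapter~17.
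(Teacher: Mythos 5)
Your argument is exactly the one the paper relies on: it declares the proof to be the same as that of \cite[Lemma 2.1]{Esnault-Mehta2010}, which is precisely the iteration $E_n\cong (F_X^m)^*E_{n+m}$, the identity $c_i(F_X^*E)=p^i c_i(E)$, and the resulting divisibility of the fixed integer $\int_X P(E_n)\cap\alpha$ by arbitrarily high powers of $p$. Your added remark that the splitting principle must be taken in its operational form on a general proper scheme (Fulton, Chapters 3 and 17) is exactly the point that makes the argument go through in this weakened setting, so the proposal is correct and matches the paper's approach.
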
	

This lemma together with \cite[Corollary 18.3.1 and Example 3.2.3]{Fulton1998} shows the following corollary:

\begin{Corollary}
	Let $X$ be a connected proper $k$-scheme  and let  $\EE= \{E_n, \sigma_n\}\in \Vect^{\perf}(X)$ be an $F$-divided bundle of rank $r$. Then for any vector bundle $E'$ on $X$ and any $n\ge 0$
	we have	$$\chi (X, E_n\otimes E')= r \chi (X, E').$$ In particular, if $X$ is projective then for any ample line bundle the normalized Hilbert polynomial of $E_n$ is equal to the Hilbert polynomial of $\cO_X$.
\end{Corollary}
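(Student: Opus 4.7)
The plan is to apply Fulton's Riemann--Roch for singular schemes together with the numerical vanishing of the Chern classes of $E_n$ established in the preceding lemma. More precisely, Corollary 18.3.1 of \cite{Fulton1998} provides a class $\tau(X)\in A_*(X)_\QQ$ such that for every coherent sheaf (in particular every vector bundle) $F$ on $X$, one has
\[
\chi(X,F)=\int_X \ch(F)\cap \tau(X).
\]
Applying this with $F=E_n\otimes E'$ and using the multiplicativity of the Chern character (so that $\ch(E_n\otimes E')=\ch(E_n)\cdot\ch(E')$ as operators on $A_*(X)$), I obtain
\[
\chi(X,E_n\otimes E')=\int_X \ch(E_n)\cdot \ch(E')\cap \tau(X).
\]

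Next, by Example 3.2.3 of \cite{Fulton1998}, the Chern character decomposes as $\ch(E_n)=r+\sum_{i\ge 1}\ch_i(E_n)$, where each $\ch_i(E_n)$ is a rational polynomial (with no constant term) in the operational Chern classes $c_1(E_n),\dots,c_i(E_n)$. Substituting this decomposition gives
\[
\chi(X,E_n\otimes E')=r\int_X \ch(E')\cap \tau(X)+\sum_{i\ge 1}\int_X \ch_i(E_n)\cdot \ch(E')\cap \tau(X).
\]
The first summand is exactly $r\,\chi(X,E')$. For the remaining terms, note that $\ch(E')\cap \tau(X)$ is a fixed element $\alpha$ of $A_*(X)_\QQ$; each $\ch_i(E_n)\cap \alpha$ is then a $\QQ$-linear combination of expressions of the form $P(E_n)\cap\alpha$ with $P$ a homogeneous polynomial of positive degree in the Chern classes of $E_n$. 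By Lemma \ref{vanishing-Chern-classes}, every such expression has degree zero under $\int_X$, so the sum vanishes, yielding $\chi(X,E_n\otimes E')=r\,\chi(X,E')$.

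For the second assertion, assume $X$ is projective and $L$ is an ample line bundle on $X$. The Hilbert polynomial of a coherent sheaf $F$ with respect to $L$ is $P_F(m)=\chi(X,F\otimes L^{\otimes m})$. Taking $E'=L^{\otimes m}$ in the identity just proved, one obtains $P_{E_n}(m)=r\,P_{\cO_X}(m)$ for every integer $m$, i.e.\ the normalized Hilbert polynomial $P_{E_n}/r$ coincides with $P_{\cO_X}$. The only subtlety here is purely bookkeeping: since $X$ need not be smooth, one must work consistently with operational Chern classes acting on $A_*(X)$ rather than with an intersection product on a Chow ring, but Fulton's framework handles this without modification, so no serious obstacle arises.
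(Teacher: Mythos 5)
Your argument is correct and is exactly the proof the paper intends: the paper deduces the corollary from Lemma \ref{vanishing-Chern-classes} together with \cite[Corollary 18.3.1 and Example 3.2.3]{Fulton1998}, which is precisely the singular Riemann--Roch plus Chern-character decomposition that you spell out. You have simply written out the details that the paper leaves implicit.
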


The following proposition generalizes \cite[Proposition 2.3]{Esnault-Mehta2010} from smooth to normal varieties. Note that the proof in  \cite[Proposition 2.3]{Esnault-Mehta2010} is not completely correct and one can find a rather complicated correction in \cite{Esnault-Mehta-erratum}. We give a different much simpler argument.
	
\begin{Proposition}\label{Jordan-Holder-for-F-divided-bundles}
	Let $X$ be a connected normal projective $k$-variety and let us fix an ample line bundle. Then for any $\EE\in \Vect^{\perf}(X)$ there exists $n_0\in \ZZ_{\ge 0}$ such that $\EE$ is a successive extension of $F$-divided bundles $\FF\in \Vect^{\perf}(X) $  with the property that all $F_n$ with $n\ge n_0$ are slope stable with numerically vanishing Chern classes. 
\end{Proposition}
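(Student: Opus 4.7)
The plan is to reduce to a Jordan--Hölder analysis inside the Tannakian category $\Vect^\perf(X)$ and then analyze simple $F$-divided bundles via Langer-type estimates on $\mu_{\max}$.

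First I would verify that $\Vect^\perf(X)$ is both Noetherian and Artinian. Any subobject $\HH \subseteq \EE$ consists of subsheaves $H_n \subseteq E_n$ of constant rank (since Frobenius pullback preserves rank), bounded by $\rk(\EE)$; and if $\HH' \subseteq \HH$ have the same rank, then the quotient is an $F$-divided sheaf that is generically zero, hence zero by Proposition \ref{local-freeness-F-div-sheaves}. Chains of subobjects therefore stabilize in both directions with length at most $\rk(\EE)$, so $\EE$ admits a finite filtration with simple quotients $\FF^{(i)} \in \Vect^\perf(X)$. By Lemma \ref{vanishing-Chern-classes}, every component of every $\FF^{(i)}$ has numerically vanishing Chern classes. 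The task thus reduces to showing that for each simple $\FF = \{F_n\}$, every $F_n$ is slope stable for $n$ large (one then takes $n_0$ to be the maximum over the finitely many simple factors).

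For slope semistability I plan to apply Langer's boundedness theorem: there exists a uniform constant $M = M(X,r)$ with $\mu_{\max}(V) \leq M$ for every torsion-free $V$ of rank $r = \rk(\FF)$ with numerically trivial Chern classes. The isomorphism $\sigma_n \colon F_X^*F_{n+1} \xrightarrow{\sim} F_n$ pulls a subsheaf of $F_{n+1}$ of slope $\alpha$ back to one of slope $p\alpha$ in $F_n$, so $\mu_{\max}(F_n) \geq p\,\mu_{\max}(F_{n+1})$. Iterating gives $\mu_{\max}(F_{n+k}) \leq M/p^k$, and since $\mu_{\max}$ is a non-negative rational with denominator at most $r$, it must vanish for $n$ large; every $F_n$ with $n$ sufficiently large is then slope semistable of slope zero.

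For stability, suppose for contradiction that some simple factor $\FF$ has $F_n$ semistable but not stable for infinitely many $n$. For such $n$ the socle $S_n := \mathrm{soc}(F_n)$ is a proper polystable subsheaf of $F_n$, and $F_X^*S_{n+1} \subseteq F_n$ is itself semistable of slope zero, so its socle embeds in $S_n$. Boundedness of $\{F_n\}$ allows only finitely many isomorphism classes of simple socle-summands, and a pigeonhole argument should extract a subsequence along which the socles become Frobenius-compatible; combined with Cartier descent on the regular locus (Theorem \ref{Cartier's-descent}) and the fully faithful restriction from Theorem \ref{normal open induces surjection on stratified fundamental groups}, this should yield a proper $F$-divided subobject of $\FF$, contradicting simplicity. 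The hard part will be exactly this last step -- controlling how the socle varies under Frobenius pullback and arranging enough compatibility to descend to a genuine $F$-divided subobject -- which is what forced the complicated correction to the original argument in \cite{Esnault-Mehta2010}. The simplification expected in the present setting should come from working systematically inside the abelian Noetherian category $\Vect^\perf(X)$ and exploiting the extension and descent tools developed in Sections 3 and 5, rather than from the moduli-space machinery used previously.
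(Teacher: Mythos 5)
Your reduction to simple objects of $\Vect^\perf(X)$ and your proof that the components $F_n$ are slope semistable of slope zero for $n\gg 0$ are both correct (the paper runs the slope estimate $\mu_{\max}(F_n)\ge p\,\mu_{\max}(F_{n+1})$ in essentially the same way, following Esnault--Mehta). The gap is in the step you yourself flag as ``the hard part'': you never actually produce the $F$-divided subobject from the socles, and the mechanism you propose (boundedness of the family, pigeonhole on isomorphism classes of socle summands, Cartier descent) is neither carried out nor what is needed. Two concrete issues. First, on a normal but non-smooth $X$ the Frobenius is not flat, so $F_X^*S_{n+1}\to F_X^*F_{n+1}\cong F_n$ need not be injective and the pullback of a stable sheaf need not be torsion free, let alone stable; the paper systematically works with reflexive pullbacks $F_X^{[m]}$ and reflexive hulls, and your sketch ignores this entirely. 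Second, and more importantly, the whole argument hinges on knowing that the reflexive Frobenius pullbacks of the stable Jordan--H\"older factors of $F_n$ remain \emph{stable}. The paper gets this by observing that the Jordan--H\"older length $l_n$ of $F_n$ is non-increasing in $n$ (a Jordan--H\"older filtration of $F_n$ refines the reflexive pullback of one of $F_{n+1}$), hence constant after a shift, which forces the pulled-back factors to stay stable. Without this input, $F_X^{[n]}S_n$ need not be polystable and there is no reason for the socles at different levels to be related at all.

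Once that is in place, no pigeonhole and no Cartier descent are needed: $F_X^{[n]}S_n$ is polystable of slope $0$, hence contained in the socle $S_0$ of $F_0$, and one gets a decreasing chain $\cdots\subset F_X^{[n+1]}S_{n+1}\subset F_X^{[n]}S_n\subset\cdots\subset F_0$ which stabilizes because the ranks are bounded. This directly yields $F_X^{[*]}S_{n+1}\simeq S_n$ for $n\ge N$, i.e.\ a reflexive $F$-divided subsheaf, which Corollary \ref{subobject-of-F-divided-bundle} (extension of subobjects across the regular locus) upgrades to a genuine subobject of $\Vect^\perf(X)$; simplicity then forces it to equal $\FF$, so the $F_n$ are polystable for large $n$, and the isotypic decomposition finishes the argument. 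Your instinct to exploit the abelian structure of $\Vect^\perf(X)$ and the subobject-extension results of Sections 3 and 5 is exactly right, but the monotone chain of reflexive pullbacks of socles inside $F_0$ --- not a pigeonhole on isomorphism classes --- is the missing idea that makes the socles Frobenius-compatible.
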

	
\begin{proof}
The beginning of the proof is the same as in \cite{Esnault-Mehta2010}.
Namely, we can find some $N$ such that for all $n\ge N$ the bundles $E_n$ are slope semistable.
Without loss of generality we can assume that $N=0$.
Let $E_{n0}=0\subset E_{n1}\subset ...\subset E_{nl_n}=E_n$ be a  Jordan-H\"older filtration of $E_n$.
By definition all quotients $E_n^i:= E_{ni}/E_{n(i-1)}$ are torsion free and slope stable of slope $0$  
and the isomorphism class of the reflexivization of the associated graded $\bigoplus_{i=1}^{l_n} E_{n}^i$ does not depend on the choice of a Jordan-H\"older filtration of $E_n$.
In particular, the length $l_n$ is a well-defined number.
Since $E_{n}\simeq F^*_X{E_{n+1}}$ is slope semistable, we can obtain a Jordan-H\"older filtration of $E_n$
as a refinement of the reflexive pullback $F_X^{[*]}E_{(n+1)\bullet}$ of a Jordan-H\"older filtration of $E_{n+1}$. In particular, we have $l_n\ge l_{n+1}$. Let $l=\min_{n\in \ZZ_{\ge 0}} l_n$. Again replacing $\EE$
by its shift we can assume that $l_n=l$ for all $n\in \ZZ_{\ge 0}$. This implies that for any $n\ge m\ge 1$ and $j$, the reflexive pullback $F_X^{[m]}(E_n^j)^{**}$ is slope stable. Now we can proceed as in \cite{Esnault-Mehta2010}.
Namely, let $S_n\subset E_n$ be the socle of $E_n$, which is the maximal nontrivial subsheaf which is slope polystable of slope $0$ (note that it is reflexive as $E_n/S_n$ has to be torsion free). 
Then $F_X^{[n]}S_n$ is again slope polystable of slope $0$ and hence it is contained in $S_0$.
Therefore we get a decreasing sequence 
$$\cdots\subset F_X^{[n+1]} S_{n+1}\subset F_X^{[n]}S_n\subset \cdots \subset E_0,$$
which becomes  stationary for large $n$ as the inclusions are either equalities or the rank drops.
This shows that there exists some $N$ such that  $F_X^{[*]}S_{n+1}\simeq S_n$ for all $n\ge N$.
Then we define $\SS '\subset \EE$ by setting  
\[S_n'=\left\{\begin{array}{cl}
F_X^{[N-n]}S_N &\hbox{ for }0\le n\le N,\\
S_n& \hbox{ for } n> N.
\end{array}
\right.\] 
By Corollary \ref{subobject-of-F-divided-bundle} we know that $\SS'$ is an $F$-divided subbundle of $\EE$ and
for all $n\ge N$ the bundles $S'_n$ are slope polystable of slope $0$. As at the end of proof of \cite[Proposition 2.3]{Esnault-Mehta2010} it is easy to see that $\SS'$ is a direct sum of  $F$-divided bundles $\SS ^j$ for which $S^j_n$ are slope stable of slope $0$ for large $n$.
In this case all Chern classes of the factors vanish numerically by Lemma \ref{vanishing-Chern-classes} and we can finish the proof by induction on the rank of $\EE$.
\end{proof}

	\section{Gieseker's conjecture for normal proper varieties}
    { In this section, we generalize the main theorem of \cite{Esnault-Mehta2010} from smooth projective varieties to normal proper varieties, leveraging the tools developed in the previous sections.}
	
	\medskip
	
	The following lemma is a relative version of flattenning stratification, and it was was proven in \cite[Expos\'e XII, Lemme 4.4]{SGA6}.
	
	\begin{Lemma}\label{relative-flattening}
		Let $X\to S$ and $Y\to S$ be proper morphisms of finite presentation, and let $f: X\to Y$ be a morphism. Let $E$ be a sheaf on $X$ of finite presentation and flat over $S$.
		Then there exists a surjective monomorphism $\tilde S\to S$ of finite presentation such that 	given a Cartesian diagram
		$$
		\xymatrix{
			X_T\ar@/^1pc/[rr]\ar[r]^{f_T}\ar[d]\ar[r]&Y_T\ar[r]\ar[d]&T\ar[d]\\
			X\ar[r]^{f}\ar@/_1pc/[rr]&Y\ar[r]&S\\
		}
		$$
		$R^i(f_T)_* E_T$ is flat over $T$  for all $i\ge 0$ if and only if $T\to S$ factors through $\tilde S$.	
	\end{Lemma}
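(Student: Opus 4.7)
The plan is to reduce the statement, locally on $S$ and on $Y$, to a flatness question about the cohomology sheaves of a single perfect complex of $\cO_Y$-modules whose formation commutes with arbitrary base change on $S$; the classical flattening stratification then yields the required surjective monomorphism.

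First I would reduce to $S=\Spec A$ affine, the statement being local on $S$, and by standard limit arguments further to the Noetherian case. Observe that $f$ is automatically proper, since $X\to S$ is proper and $Y\to S$ is separated. Since $E$ is coherent and flat over $S$, Grothendieck's cohomology and base change applied to $f$ relative to $S$ produces, locally on $Y$, a bounded complex $K^\bullet$ of finite locally free $\cO_Y$-modules together with natural isomorphisms $Rf_{T*}E_T \simeq K^\bullet|_{Y_T}$ functorial in $T\to S$. Concretely, $Rf_*E$ is computed locally on $Y$ by a complex of $S$-flat quasi-coherent $\cO_Y$-modules arising from an affine cover of $X$ over $Y$, and one truncates this complex to a bounded complex of finite locally free $\cO_Y$-modules without disturbing cohomology after any base change on $S$.

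Given such a $K^\bullet$, the condition that all $R^i(f_T)_*E_T$ be flat over $T$ translates into the condition that each cohomology sheaf $\cH^i(K^\bullet|_{Y_T})$ be flat over $T$, and only finitely many $i$ contribute because $K^\bullet$ is bounded. Applying the classical flattening stratification to each $\cH^i(K^\bullet)$, regarded as a coherent sheaf on $Y$ over $S$, yields surjective monomorphisms $\tilde S_i\to S$ of finite presentation, and the desired $\tilde S$ is obtained as their fibre product over $S$, which remains a surjective monomorphism of finite presentation. Gluing the local constructions over an affine cover of $Y$ reduces to the stability of this class of monomorphisms under fibre products.

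The main obstacle is the first step: producing a base-change-compatible perfect complex $K^\bullet$ for the morphism $f$ when $E$ is only $S$-flat (and not $Y$-flat). This is more subtle than the classical case $Y=S$, where Mumford's theorem suffices, but follows from the cohomology-and-base-change formalism of EGA III.7 applied relative to $f$ rather than to $X\to S$. Once this input is available, the remaining flattening-and-gluing argument is standard.
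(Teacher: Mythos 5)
The paper does not actually prove this lemma: it cites \cite[Expos\'e XII, Lemme 4.4]{SGA6}, and the introduction explicitly singles the statement out as ``the most delicate part'' of the relative representability of the Picard functor. Your proposal is therefore an independent proof attempt, and it has a genuine gap at exactly the step you yourself flag as the main obstacle: the complex $K^\bullet$ you want does not exist. A bounded complex of finite locally free $\cO_Y$-modules computing $Rf_{T*}E_T$ after every base change would in particular (taking $T=S$) exhibit $Rf_*E$ as a perfect complex on $Y$, and this fails when $E$ is only flat over $S$. Take $S=\Spec(k)$, $Y=\Spec\bigl(k[\epsilon]/(\epsilon^2)\bigr)$, $X=\Spec(k)$ the reduced point, $f$ the closed immersion, and $E=\cO_X$: all hypotheses of the lemma hold, but $Rf_*E=k$ has infinite projective dimension over $k[\epsilon]/(\epsilon^2)$, hence is not quasi-isomorphic to a bounded complex of finite free modules, even Zariski-locally on $Y$. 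The cohomology-and-base-change formalism of EGA~III.7 produces finite locally free terms only over a base with respect to which the sheaf is flat; applied to $f$ it would require $E$ to be flat over $Y$, which is not assumed. The \v{C}ech complex you allude to has terms that are quasi-coherent over $\cO_Y$ and flat over $S$, but there is no way in general to ``truncate'' it to finite locally free $\cO_Y$-modules.

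Even a weakened version of your intermediate object --- a bounded complex of coherent, $S$-flat $\cO_Y$-modules whose formation commutes with base change on $S$ --- would not rescue the argument as written, because cohomology does not commute with base change: the flattening stratification of the sheaves $\cH^i(K^\bullet)$ controls the loci where $\cH^i(K^\bullet)\otimes_{\cO_S}\cO_T$ is $T$-flat, not where $\cH^i(K^\bullet\otimes_{\cO_S}\cO_T)=R^i(f_T)_*E_T$ is $T$-flat, and these differ precisely when the exchange maps fail to be isomorphisms. This is why the statement requires a separate d\'evissage rather than a reduction to the flattening stratification of finitely many coherent sheaves on $Y$; roughly, one reduces to the projective Noetherian case and tests flatness of $R^i(f_T)_*E_T$ through the $T$-modules $H^0\bigl(Y_T,R^i(f_T)_*E_T(n)\bigr)$ for suitable twists by a relatively ample sheaf on $Y$, where the finite-free-complex technique over $T$ itself does apply --- the delicacy being the uniformity of the required $n$. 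Your reductions to affine Noetherian $S$, the observation that $f$ is proper, and the final gluing and fibre-product steps are all fine, but they sit downstream of the broken step; as it stands the cleanest fix is to cite SGA6 as the paper does.
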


	We also have the following result, which is proven in \cite[Expos\'e XII, Lemme 4.6]{SGA6}.	
	
	\begin{Lemma}\label{relative-isomorphism-of-sheaves}
		Let $X\to S$ be a proper morphism of finite presentation and
		let $u: E_1\to E_2$ be a homomorphism of $\cO_Y$-modules of finite presentation with $E_2$ flat over $S$. Then 
		there exists an open subscheme $\tilde S\subset S$ of finite presentation such that given a Cartesian diagram
		$$
		\xymatrix{
			X_T\ar[r]^{\bar{h}}\ar[d]&X\ar[d]\\
			T\ar[r]^{h}&S\\
		}
		$$
		$\bar{h}^*u$ is an isomorphism if and only if $h$ factors through $\tilde S$. 
	\end{Lemma}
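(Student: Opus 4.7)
The plan is to separate the condition ``$\bar h^*u$ is an isomorphism'' into surjectivity and injectivity, realize each as the vanishing of a coherent sheaf on $X$, and then use properness of $f\colon X\to S$ to convert each vanishing locus into an open subscheme of $S$. The universal $\tilde S$ will arise as the intersection of these two loci.

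First I would treat surjectivity. Let $C := \coker(u)$, which is coherent of finite presentation on $X$. Since cokernels commute with pullback, $\coker(\bar h^*u) = \bar h^*C$, so $\bar h^*u$ is surjective iff $\bar h^*C = 0$. By properness, the image $Z := f(\Supp(C))$ is closed in $S$, and a fiberwise Nakayama argument shows $\bar h^*C = 0$ iff for every $t\in T$ one has $C|_{X_{h(t)}}=0$, iff $h(T)\cap Z = \emptyset$, iff $h$ factors through the open subscheme $U_1 := S\setminus Z$ (which is of finite presentation because $f$ is).

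After replacing $S$ with $U_1$, the map $u$ is surjective on $X$, and I would set $K := \ker(u)$, which is coherent of finite presentation (the flatness of $E_2$ is what keeps $K$ well-behaved). I expect the main obstacle to be the following technical point: showing that the short exact sequence $0\to K\to E_1\to E_2\to 0$ pulls back to a short exact sequence after arbitrary base change $T\to S$, equivalently that $\Tor^{\cO_X}_1(E_2,\cO_{X_T}) = 0$. This should follow from the $\cO_S$-flatness of $E_2$ via a base-change / change-of-rings spectral sequence, but some care is needed since $X$ is not assumed $\cO_S$-flat; one can reduce to the fiberwise statement and invoke Noetherian approximation to handle the finite-presentation hypotheses.

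Once this vanishing is established, $\bar h^*u$ is injective iff $\bar h^*K = 0$, and applying the argument of Step~1 to $K$ in place of $C$ produces the desired open subscheme $\tilde S := U_1\setminus f(\Supp(K))$ of $S$, which satisfies the required characterizing property by construction.
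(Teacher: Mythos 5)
The paper does not prove this lemma at all --- it simply cites \cite[Exposé XII, Lemme 4.6]{SGA6} --- and your argument is exactly the standard one underlying that reference: realize surjectivity of $\bar h^*u$ as vanishing of $\bar h^*\coker(u)$ (Nakayama plus properness of $f$ make this an open condition on $S$), and then, over that open locus, realize injectivity as vanishing of $\bar h^*\ker(u)$, using $S$-flatness of $E_2$ to ensure the kernel commutes with base change; so the proposal is correct and matches the cited proof.

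One clarification on the point you single out as the ``main obstacle'': the relevant Tor is over $\cO_S$, not over $\cO_X$. Locally, with $X=\Spec B$, $S=\Spec A$, $T=\Spec A'$, the pullback of a $B$-module $M$ along $X_T\to X$ is $M\otimes_B(B\otimes_A A')=M\otimes_A A'$, so exactness of the pulled-back sequence $0\to K\otimes_A A'\to E_1\otimes_A A'\to E_2\otimes_A A'\to 0$ follows immediately from $\Tor_1^A(E_2,A')=0$, which is just $A$-flatness of $E_2$; no change-of-rings spectral sequence is needed and the non-flatness of $X$ over $S$ is irrelevant. The place where a limit/Noetherian-approximation argument is genuinely required is elsewhere: $\ker(u)$ is a priori only of finite type (not finitely presented), so to conclude that $\tilde S\hookrightarrow S$ is a quasi-compact, hence finitely presented, open immersion one should descend to a Noetherian base in the usual way; in the paper's application $S$ is of finite type over a field, so this is harmless.
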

	
	The proof of the next theorem relies on the existence of a certain moduli scheme of vector bundles. Let us  fix a positive integer $r$, a projective morphism $f: X_S\to S$ of schemes of finite type over some fixed field (that in our case will be $\FF_p$) and an $f$-very ample line bundle $\cO_{X}(1)$. Then by \cite[Theorem 4.1]{Langer2004}
	there exists a quasi-projective moduli scheme $M(r, X_S)\to S$ of Gieseker stable rank $r$ vector bundles  with numerically vanishing Chern classes. This moduli scheme universally corepresents the functor of isomorphism classes of flat families of geometrically Gieseker stable rank $r$ vector bundles (with numerically vanishing Chern classes) on the fibers of $f$. In the following we  use also existence of a quasi-universal family $\cU_S$. This is a flat vector bundle on $M(r, X_S)\times _S X_S\to M(r, X_S)$, which is geometrically Gieseker stable on the fibers and such that for any $T\to S$ and a $T$-flat family $E_T$ of Gieseker stable rank $r$ vector bundles with numerically vanishing Chern classes on the fibers of $f_T: X_T\to T$, if $\varphi _{E_T}: T\to M(r, X_S)$  denotes the classifying morphism then there exists a vector bundle $W$ on $T$ such that $E_T\otimes f_T^*W\simeq \varphi _{E_T}^*\cU_S$.

    \begin{Theorem}\label{EM10 for normal proper}
		Let $X$ be a normal integral $\FF_p$-scheme which is proper geometrically connected over some perfect field $k$.
		If for some rational point  $x\in X(k)$ the maximal \'etale quotient  	\(\pi_1^{\No,\et}(X,x)\) of the Nori fundamental group scheme vanishes 	then there are no non-trivial $F$-divided bundles on $X$.	
	\end{Theorem}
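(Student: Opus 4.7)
The plan is to adapt the strategy of Esnault--Mehta \cite{Esnault-Mehta2010} by first reducing to a normal projective birational model via Chow's lemma, and then handling the descent from the model back to $X$ using the nonflat descent lemmas of SGA~6.

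\textbf{Reduction to the normal projective case.} By Chow's lemma, followed by normalization, I would produce a birational proper morphism $f: Y \to X$ with $Y$ a normal projective $k$-variety. By Corollary~\ref{birational-map-on-gerbes}, the induced morphism $\PiF_Y \to \PiF_X$ is a relative gerbe, so by Lemma~\ref{relative gerbe = surjective} the pullback functor $f^*: \Vect^\perf(X) \to \Vect^\perf(Y)$ is fully faithful and every subobject of $f^*\EE$ descends uniquely to a subobject of $\EE$. Given $\EE \in \Vect^\perf(X)$, I apply Proposition~\ref{Jordan-Holder-for-F-divided-bundles} to $f^*\EE$ on the polarized $Y$ and descend the resulting Jordan--Hölder filtration back to $\EE$. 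By full faithfulness of $f^*$, it then suffices to show that each successive quotient $\EE'$ is trivial, i.e.\ that $f^*\EE'$ is trivial on $Y$, under the further assumption that $f^*E'_n$ is slope-stable with numerically vanishing Chern classes for $n$ large.

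\textbf{Finite Frobenius orbit in the moduli space.} Let $r = \rk(\EE')$. The bundles $f^*E'_n$ for $n \ge n_0$ define points in Langer's quasi-projective moduli scheme $M(r, Y)$ of slope-stable rank-$r$ bundles with numerically vanishing Chern classes \cite[Theorem~4.1]{Langer2004}. The $F$-divided isomorphisms yield $F_Y^*[f^*E'_{n+1}] = [f^*E'_n]$. Boundedness (the normalized Hilbert polynomial is constant by Lemma~\ref{vanishing-Chern-classes}) together with the quasi-projectivity of $M(r, Y)$ forces the sequence $\{[f^*E'_n]\}_{n \ge n_0}$ to be finite. I thus obtain $N \ge n_0$ and $s \ge 1$ with $[f^*E'_{N+s}] = [f^*E'_N]$; combining with the canonical identification $f^*E'_N \cong F_Y^{s*}(f^*E'_{N+s})$ coming from the $F$-divided structure, I obtain a Frobenius-periodicity isomorphism $F_Y^{s*}(f^*E'_N) \cong f^*E'_N$ on $Y$.

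\textbf{Descent to $X$ and the Nori hypothesis.} The crucial step is to descend this periodicity from $Y$ to an isomorphism $F_X^{s*}E'_N \cong E'_N$ on $X$. A priori it is unclear whether the locus in $M(r, Y)$ corresponding to bundles on $Y$ that descend to $X$ is even constructible, since the Hilbert functor of $X$ is only an algebraic space whose connected components need not be of finite type. I would apply Lemmas~\ref{relative-flattening} and \ref{relative-isomorphism-of-sheaves} to the quasi-universal bundle on $M(r, Y) \times_k Y$ together with the morphism encoding ``descent along $f$'' to show that this descent locus is locally closed in $M(r,Y)$, and that the Frobenius-periodic point $[f^*E'_N]$ lies inside it. Hence the periodicity isomorphism descends to $X$, producing $F_X^{s*}E'_N \cong E'_N$ on $X$. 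A Lange--Stuhler style argument then exhibits $E'_N$ as coming from a representation of a finite étale quotient of $\pi_1^{\No,\et}(X, x)$, which is trivial by assumption; it follows that $E'_N$, and hence $\EE'$ and $\EE$ itself, is trivial.

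\textbf{Main obstacle.} The principal difficulty is the third step: descending the moduli-theoretic Frobenius periodicity from $Y$ to $X$ in the absence of projectivity for $X$. Without the nonflat descent machinery of \cite[Expos\'e XII, Section~4]{SGA6}, the locus of ``descendable'' bundles in $M(r, Y)$ is essentially opaque, because the Hilbert functor on the proper (but non-projective) variety $X$ fails to be representable by a finite-type scheme; making this locus locally closed and verifying that it contains the finite Frobenius orbit is the technical heart of the proof.
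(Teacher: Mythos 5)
Your reduction via Chow's lemma, the descent of subobjects along $f^*$ using Corollary~\ref{birational-map-on-gerbes} and Lemma~\ref{relative gerbe = surjective}, and your identification of the SGA~6 nonflat-descent lemmas as the tool for controlling the locus of bundles on $Y$ that descend to $X$ all match the paper's strategy. But your second step contains a genuine gap that breaks the proof: you assert that boundedness plus quasi-projectivity of $M(r,Y)$ forces the sequence $\{[f^*E'_n]\}_{n\ge n_0}$ to be \emph{finite}, hence that some actual $f^*E'_N$ is Frobenius-periodic. This is false: $M(r,Y)$ is quasi-projective over $k$, which is an arbitrary (algebraically closed, after your reduction) field of positive characteristic, so it has infinitely many $k$-points and there is no reason for the orbit $[f^*E'_n]=F_Y^*[f^*E'_{n+1}]$ to repeat. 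If finiteness of this orbit were automatic, the entire Hrushovski/spreading-out machinery of Esnault--Mehta would be superfluous. The actual argument does not prove periodicity of any $E'_n$; instead it takes the Zariski closure $A$ of $\{[f^*E_n]\}$, observes that Frobenius pullback induces a dominant rational self-map of $A$ (made $k$-linear via the relative Frobenius, i.e.\ a Verschiebung), spreads everything out over a finitely generated $\FF_p$-algebra $R$, and at closed points $s\in\Spec R$ (which have \emph{finite} residue fields) invokes Hrushovski's theorem \cite[Corollary 1.2]{Hrushovski2004} to produce a \emph{dense set of periodic points} of $A_s$. These periodic points correspond to different Frobenius-periodic stable bundles $G$ on $\tilde X_s$ — not to the $E_n$ themselves.

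This changes the role of your third step as well: the SGA~6 lemmas are not used to show that the single point $[f^*E'_N]$ lies in the descendable locus, but to show that the locus $V$ where $f_{s*}\cU$ is locally free and the evaluation $f_s^*f_{s*}\cU\to\cU$ is an isomorphism is open and dense in $A_s$ (because it contains the images of all the $[f^*E_n]$), so that it must meet the dense set of Hrushovski-periodic points. One then applies Katz's theorem to the resulting periodic bundle $G'=f_{s*}G$ on $X_{\bar s}$, uses surjectivity of the specialization map $\pi_1^{\et}(X)\to\pi_1^{\et}(X_{\bar s})$ to kill the representation, and derives a contradiction with the Gieseker stability of $G$ (rank $\ge 2$). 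Your Lange--Stuhler endgame is the right idea, but it must be run on the specialized periodic bundle over a finite field, not on $E'_N$ over $k$, since the latter's periodicity is never established.
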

	
	\begin{proof}				
        By Chow's lemma there exists a normal integral projective $k$-scheme and a surjective birational $k$-morphism $f: \tilde X\to X$.  
 By Lemma \ref{evaulation-map},  if an $F$-divided bundle $\EE_{\bar k}$ is trivial then $\EE$ is trivial. Moreover, we have canonical isomorphisms	\(\pi_1^{\et}(X_{\bar k},\bar{x})\simeq \pi_1^{\No,\et}(X_{\bar k},\bar{x})\simeq \pi_1^{\No,\et}(X_{\bar k},\bar{x})_{\bar k}\).
		So in the following we can assume that $k$ is algebraically closed and $\pi_1^{\et}(X_{\bar k},\bar{x})=0$.  By \cite[Proposition 2.4]{Esnault-Mehta2010} (note that the proof of this proposition works for any geometrically connected  proper $k$-scheme) it is sufficient to prove that every simple object in $\Vect ^{\perf} (X)$ is trivial.
		
		It is easy to see that every rank one  $F$-divided bundle $\EE$ on $X$ is trivial so we need to consider
		a rank $r\ge 2$ $F$-divided bundle $\EE$ on $X$, which is a simple object in $\Vect ^{\perf} (X)$. Then by Lemma \ref{birational-map-on-gerbes} $f^*\EE$ is also a simple object in $\Vect ^{\perf} (\tilde X)$. In the following we fix some ample line bundle $\cO_{\tilde X}(1)$.
		
		By Proposition \ref{Jordan-Holder-for-F-divided-bundles} there exists some $n_0$ such that all $f^*E_n$ with $n\ge n_0$ are slope stable (and hence Gieseker stable) with numerically vanishing Chern classes. Without loss of generality we can assume that $n_0=0$. Let us consider the moduli scheme $M (r, \tilde X)/k$ of Gieseker stable rank $r=\rk \EE$  vector bundles on $\tilde X/k$ with numerically vanishing Chern classes.	
		
		Let us define the locus $A_j\subset M(r, \tilde X)$, which is the Zariski closure of the set $\{ [f^*E_n]\} _{n\ge j}\subset M(r, \tilde X) (k)$. Since $A_{j+1}\subset A_j$, the sequence $\{ A_j\}_{j\ge 0}$ stabilizes and for large $n$ we have $A_n=A:=\bigcap A_j$. Let $M_0$ be the open subset of $ M(r, \tilde X)$ corresponding to bundles $E$ such that $F_{\tilde X}^*E$ is Gieseker stable. Then pullback by the absolute Frobenius morphism defines a morphism $M_0\to  M(r, \tilde X)$. After restricting to $A\cap M_0$ we have a well defined morphism, which gives a dominant
		rational map $\psi: A\dashrightarrow A$. This morphism is not $k$-linear and to make it $k$-linear we need to consider the relative Frobenius morphism $F_{\tilde X/k}: \tilde X\to \tilde X'$. Then we have 
		$ M(r, \tilde X')\stackrel{\simeq}{\longrightarrow}  M(r, \tilde X)\times _{F_k}k$ and if  $A'$ is the reduced preimage of $A\times _{F_k}k$ then we obtain a dominant rational $k$-morphism $\varphi: A'\dashrightarrow A$, which is the restriction of the classical Verschiebung rational map.
		
		We need to spread out the whole situation. There exists a finitely generated $\FF_p$-algebra $R\subset k$ and a  scheme $X_S$ of finite type over $S=\Spec R$ such that $X/k$ is isomorphic to the generic geometric fiber of $X_S\to S$.  Shrinking $S$ if necessary we can assume that $X_S\to S$ is proper and flat with geometrically connected fibres. We can also assume that all fibers of $X_S\to S$ are geometrically normal and geometrically integral. 
		Similarly, we can find $f_S: \tilde X_S\to X_S$ with $\tilde X$ projective  over $S$ so that $f_S$ is isomorphic to $f$ over the generic geometric point of $S$. By Zariski's main theorem for all $s\in S$ the morphisms  $f_s: \tilde X_s\to X_s$ are birational and satisfy $f_{s*}\cO _{\tilde X_s}=\cO _{X_s}$.  We can also assume that $\tilde X_S\to S$ is flat and all its fibers  are geometrically normal and geometrically integral.

		We can also construct $S$-flat models $A_S\subset M(r, \tilde X_S)$ for $A$ and $A_S'\subset M(r, \tilde X'_S)$ for $A'$.
		We have a dominant rational map of $S$-schemes $\varphi_S:A'_S\dashrightarrow A_S$ extending $\varphi$ and
		defined by pullback via the relative Frobenius morphism  $F_{\tilde X_S/S}:\tilde X_S\to \tilde X'_S$.
		Shrinking $S$ if necessary we can assume that the restriction $\varphi_s: A'_s\dashrightarrow A_s$ is a dominant rational map for all closed points $s$ of $S$. For such $s$ we let  $\varphi_s^{(i)}$ denote the $i$th Frobenius twist of $\varphi_s$ and set $m_s=(\kappa (s):\FF_p)$. Since $\varphi _s$ is defined by $[E]\to [F_{\tilde X_s/s}^*E]$, the composition $\varphi_s^{(m_s-1)}\circ ...\circ \varphi_s^{(1)}\circ \varphi _s$ defines a rational endomorphism $A_s\dashrightarrow A_s$ for any closed point $s\in S$. Now as in \cite{Esnault-Mehta2010} Hrushovski's theorem \cite[Corollary 1.2]{Hrushovski2004} implies that set of closed points of $A_s$, which are periodic for this rational endomorphism is dense in $A_s$. Such points correspond to geometrically Gieseker stable vector bundles $G$ on $\tilde X_s$  such that for some $m\ge 1$ we have an isomorphism $(F^m_{\tilde X_s})^*G\simeq G$.

		Let us recall that the moduli scheme $M(r, \tilde X_S)/S$ comes equipped with  a quasi-universal family $\cU$. Applying 	Lemma \ref{relative-flattening} to the family $\cU_{A_S}:=\cU|_{A_S\times _S{\tilde X}_S}$ and the morphism 
		$f_{A_S}$ over $A_S$, we obtain a surjective monomorphism $\mu: \tilde
        A_S\to A_S$ of finite presentation satisfying the conclusion of the
        lemma. By openness of the flat locus (see \cite[Theorem
        \href{https://stacks.math.columbia.edu/tag/0399}{0399}]{StacksProject}) the set 
        $U:=\{ x\in \tilde A_S : \mu \hbox{ is flat at }x \}$ is open and dense
        in $\tilde A_S$. On the other hand, by \cite[Theorem
        \href{https://stacks.math.columbia.edu/tag/025G}{025G}]{StacksProject}
        $U\to A_S$ is an open immersion so the set $U\cap \{ [f^*E_n]\}$ is
        dense in $A$.
        Since $f_*(\cU_{A_S}|_{ [f^*E_n]\times \tilde X})=E_n$,  this implies
        that $f_*(\cU_{A_S}|_{(U\cap A)\times \tilde X})$ is locally free. So shrinking $S$ if necessary we can assume that  $f_{U*}\cU_U$, where $\cU_U=\cU_{A_S}|_{U\times _S\tilde{X}_S}$, is locally free. Now applying Lemma \ref{relative-isomorphism-of-sheaves} to $U\times _S\tilde{X}_S\to U$ and the relative evaluation homomorphism $u: f_U^*f_{U*}\cU_{U}\to \cU_{U}$, we obtain an open subscheme $V\subset U$ such  that   $\overline{h}^*u$ is an isomorphism for any $T\to V$. But $V$ contains images of the points corresponding to $[f^*E_n]$, so $V\cap A_s$ is open and dense in $A_s$ for all $s\in S$.
		This shows that there are  geometrically Gieseker stable vector bundles $G$ on $\tilde X_s$  such that for some $m\ge 1$ we have an isomorphism $(F^m_{\tilde X_s})^*G\simeq G$, $G'=f_{s*}G$ is locally free and the relative evaluation $f_s^*f_{s*}G\to G$ is an isomorphism. In particular, we also have  $(F^m_{X_s})^*G'\simeq G'$.
		
		Let $\bar{s}$ be a geometric point lying over $s$. Then by \cite[Proposition 4.1.1]{Katz1972} $G_{\bar s}'$ gives rise to a continuous representation $\pi_1^{\et} (X_{\bar s})\to \GL _r (\FF_{p^m})$. 
        But by \cite[Lemma
        \href{https://stacks.math.columbia.edu/tag/0C0P}{0C0P}]{StacksProject},
        the specialization map $\pi_1^{\et} (X)\to \pi_1^{\et} (X_{\bar{s}})$ is surjective  and hence $ \pi_1^{\et} (X_{\bar{s}})=0$.
		This implies that $G'_{\bar s}$ is trivial. But then $G_{\bar{s}}$ is also trivial, which contradicts our assumption that
		$G$ is geometrically Gieseker stable.
	\end{proof}

\section*{Acknowledgements}

The first author was partially supported by the Polish National Centre  (NCN) grant {2021/41/B/ST1/03741}. The second author was supported
by Guangdong Basic and Applied Basic Research Foundation grant 
2025A1515012175. The authors would like to thank Bhargav Bhatt for kindly
permitting them to include his proof of Theorem \ref{h-descent-for-F-divided}
into this paper. Any errors introduced during the rephrasing of his
proof are solely the responsibility of the authors.

	\bibliographystyle{amsalpha}
	\bibliography{References}

\end{document}